\newcolumntype{M}[1]{>{\centering\arraybackslash}m{#1}}
\theoremstyle{plain} %
\newtheorem{theorem}{Theorem}[section]
\newtheorem{lemma}[theorem]{Lemma}
\newtheorem{proposition}[theorem]{Proposition}
\newtheorem{corollary}[theorem]{Corollary}
\numberwithin{figure}{theorem}
\newtheorem*{conjNRS}{Conjecture (NRS)}
\theoremstyle{plain}
\newtheorem{thmalph}{Theorem}
\newtheorem{coroalph}[thmalph]{Corollary}
\newcommand{\blocktheorem}[1]{%
  \csletcs{old#1}{#1}%
  \csletcs{endold#1}{end#1}%
  \RenewDocumentEnvironment{#1}{o}
    {\par\addvspace{1.5ex}
     \noindent\begin{minipage}{\textwidth}
     \IfNoValueTF{##1}
       {\csuse{old#1}}
       {\csuse{old#1}[##1]}}
    {\csuse{endold#1}
     \end{minipage}
     \par\addvspace{1.5ex}}
}
\theoremstyle{definition}
\newtheorem{definition}[theorem]{Definition}
\newtheorem{example}[theorem]{Example}
\newtheorem*{notation}{Notation}
\newtheorem*{convention}{Convention}
\theoremstyle{remark}
\newtheorem{remark}[theorem]{Remark}
\newtheorem*{remark*}{Remark}
\newcommand\proofsymbol{\frame{\rule[0pt]{0pt}{7pt}\rule[0pt]{7pt}{0pt}}}
\newcommand{\R}{\mathbb R}
\newcommand{\Q}{\mathbb Q}
\newcommand{\Z}{\mathbb Z}
\newcommand{\F}{\mathbb F}
\newcommand{\eps}{\varepsilon}
\newcommand{\GL}[2]{{ \operatorname{GL} }_{ #1 } ( #2 )} 
\newcommand{\SL}[2]{{ \operatorname{SL} }_{ #1 } ( #2 )}
\newcommand{\SU}[2]{{ \operatorname{SU} }_{ #1 } ( #2 )} 
\newcommand{\PSL}[2]{{ \operatorname{PSL} }_{ #1 } ( #2 )}
\newcommand{\PGL}[2]{{ \operatorname{PGL} }_{ #1 } ( #2 )}
\newcommand{\PSU}[2]{{ \operatorname{PSU} }_{ #1 } ( #2 )}
\newcommand{\alt}[1]{\mathfrak{A}_{#1}}
\newcommand{\sym}[1]{\mathfrak{S}_{#1}}
\newcommand{\irr}[1]{\mathrm{Irr}(#1)} %
\def\irrprime#1#2{{\mathrm{Irr}}_{#1'}(#2)} %
\def\blprinc#1#2{ {B_{ #1 } ( #2 )}}
\newcommand{\GG}{\mathbf G}
\newcommand{\PP}{\mathbf P}
\DeclareMathOperator{\Uch}{Uch}
\newcommand{\tw}[1]{{}^#1\!}
\newcommand\type[1]{\operatorname{#1}} %
\renewcommand{\PP}{P} %
\newcommand{\QQ}{Q} %
\newcommand{\ord}{\mathrm{ord}}
\def\zent#1{{\bf Z}(#1)}
\newcommand{\aut}[1]{\operatorname{Aut}( #1 )} %
\newcommand{\res}[2]{{#1}_{\mkern 1mu \vrule height 1.7ex\mkern2mu #2}}
\newcommand{\sgn}{\mathrm{sgn}}
\newcommand{\floor}[1]{\left\lfloor #1 \right\rfloor}
\newcommand{\abs}[1]{\vert#1\vert}
\newcommand\reallywidehat[1]{%
	\savestack{\tmpbox}{\stretchto{%
			\scaleto{%
				\scalerel*[\widthof{\ensuremath{#1}}]{\kern-.6pt\bigwedge\kern-.6pt}%
				{\rule[-\textheight/2]{1ex}{\textheight}}%
			}{\textheight}%
		}{0.5ex}}%
	\stackon[1pt]{#1}{\tmpbox}%
}
\keywords{Character degrees, principal blocks, two primes}
\subjclass[2010]{20C15, 20C20, 20C30, 20C33} %
\author{Annika Bartelt}
\address{FB Mathematik, RPTU Kaiserslautern--Landau, Postfach 3049, 67653 Kaisers\-lautern, Germany}
\email{bartelt.a@rptu.de}%
\title[Character degrees in principal blocks for distinct primes]{Character degrees in principal blocks for distinct primes}%
\date{\today}
\begin{document}
\begin{abstract}
	Let $G$ be a finite group of order divisible by two distinct primes $p$ and $q$. We show that $G$ possesses a non-trivial irreducible character of degree not divisible by~$p$ nor $q$ lying in both the principal $p$- and $q$-block whenever $G$ is one of the following: an alternating group $\alt n$, $n\geq 4$, a symmetric group $\sym n$, $n\geq 3$,
	or a finite simple classical group of type $\type{A}$, $\type{B}$, or $\type C$, defined in characteristic distinct from $p$ and $q$.
	This extends earlier results of Navarro--Rizo--Schaeffer Fry for $2\in\{p,q\}$, and in particular completes the proof of an instance of a conjecture of the same authors, e.g., in the case of symmetric and alternating groups.
\end{abstract}

\maketitle

\section*{Introduction}
Let $G$ be a finite group and $p$ be a prime. A common question in character theory is the following: Which properties of $G$ are encoded in the irreducible characters of $G$? Many group-theoretic properties of $G$ can indeed be read off its character table, for example, one can detect a normal abelian Sylow $p$-subgroup, by the famous It\^{o}–Michler Theorem from 1986 \cite[Theorem 2.3]{Michler}.
Another conjectural instance of this phenomenon is the following conjecture of Navarro, Rizo and Schaeffer Fry from 2022. They relate the absence of certain non-trivial characters of $G$ to the existence of a certain subgroup of $G$.\\
Let $\irrprime p G$ denote the set of irreducible characters of $G$ of degree not divisible by~$p$ and $\blprinc p G$ the principal $p$-block of $G$. Note that the trivial character~$\mathbbm{1}_G$ of $G$ lies in $\irrprime p {\blprinc{p}{G}}\coloneqq \irrprime p G \cap \irr{\blprinc{p}{G}}$, and we have $\irrprime p {\blprinc{p}{G}}\neq \{\mathbbm{1}_G\}$ if and only if $p$ divides $\abs G$, by \cite[Lemma 2.1]{NRS}. In the same article, the authors propose the following:
\begin{conjNRS} \cite[Conjecture A]{NRS}
	Let $G$ be a finite group and $p$ and $q$ distinct primes. If $\irrprime p {\blprinc p G} \cap \irrprime q {\blprinc q G}=\{\mathbbm{1}_G\}$, then there exists a Sylow $p$-subgroup $P$ of $G$ and a Sylow $q$-subgroup $Q$ of $G$ such that $[P,Q]=\{1\}$.
\end{conjNRS}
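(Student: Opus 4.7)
The plan is to establish the NRS Conjecture only in the cases listed in the abstract (alternating, symmetric, and finite simple classical groups of types $A$, $B$, $C$ in non-defining characteristic), and to do so by the stronger route of producing, for each such $G$, an explicit non-trivial character in $\irrprime p {\blprinc p G} \cap \irrprime q {\blprinc q G}$; the conditional hypothesis of the conjecture is then vacuous. Since \cite{NRS} has already settled the case $2\in\{p,q\}$ for these families, I would focus on $p,q$ both odd. At a high level each case separates into a \emph{block} requirement (Nakayama cores for symmetric groups, $e$-cores for Lie-type groups) and an \emph{arithmetic} requirement on the degree (the $p'$- and $q'$-conditions), and I would marshal the standard toolkit for each family rather than attempt any general reduction.

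For $\sym n$ and $\alt n$, irreducible characters of $\sym n$ are parameterised by partitions $\lambda\vdash n$. By Nakayama's conjecture, $\chi^\lambda$ lies in $\blprinc p {\sym n}$ precisely when the $p$-core of $\lambda$ coincides with the $p$-core of $(n)$, and the hook length formula determines when $p\nmid\chi^\lambda(1)$. I would build a non-trivial $\lambda\neq(n)$ with the same $p$- and $q$-cores as $(n)$ by reading the $p$- and $q$-abacus displays of $(n)$ and performing coordinated bead moves, controlling the resulting hooks so that no new hooks of length divisible by $p$ or $q$ appear. Small values of $n$ and a handful of exceptional primes would be dispatched by hand. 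For $\alt n$ the result transfers via Clifford theory and the Fong--Reynolds correspondence between blocks of $\sym n$ and $\alt n$.

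For a simple classical group $G$ of type $A$, $B$, or $C$ in cross-characteristic $\ell\notin\{p,q\}$, I would work inside a connected reductive cover $\GG^F$ and exploit Brou\'e--Malle--Michel $e$-core theory: with $e=e_p(\ell)$ and $e'=e_q(\ell)$, a unipotent character lies in the principal $p$-block (resp.\ $q$-block) exactly when its symbol has the same $e$-core (resp.\ $e'$-core) as the trivial symbol. I would then exhibit a non-trivial unipotent character whose symbol has the correct $e$-core and $e'$-core simultaneously and whose generic degree, a product of cyclotomic polynomial values in $\ell$, is coprime to $pq$. Cases would be organised by the parities of $e,e'$ and their relation to the Lie rank; in types $B$ and $C$ the symbol-level duality doubles the supply of candidates. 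In parameter ranges where no unipotent character works, I would pass to a Lusztig series attached to a carefully chosen semisimple $\ell'$-element of the dual group whose centraliser is a $\Phi_e\Phi_{e'}$-friendly Levi.

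The main obstacle is the \emph{simultaneity} of the conditions: fixing the $p$-core typically forces hook or degree contributions that spoil the $q$-side, and in the Lie-type setting the $e$-core and $e'$-core constraints can together force unwanted factors of $p$ or $q$ into the degree. I expect the classical group analysis to be delicate when $e$ and $e'$ are both large relative to the rank, and the symmetric case to be tightest when $n$ is close to a multiple of $pq$. Handling the finite list of small-rank and small-$n$ exceptions, likely with assistance from explicit character tables (Chevie, the Atlas), will be the most routine but also the most error-prone portion of the argument.
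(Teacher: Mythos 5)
Your high-level plan --- prove Theorem A by exhibiting for each listed $G$ an explicit non-trivial member of $\irrprime p {\blprinc p G} \cap \irrprime q {\blprinc q G}$, so that the conjectural implication holds vacuously --- is exactly the paper's strategy, and the toolkit you name (Nakayama, hook formula, abacus moves for $\sym n$/$\alt n$; Brou\'e--Malle--Michel $e$-core/$e$-cocore data and generic degrees for classical types) is the right one. The paper organizes the partition/symbol constructions around almost hook partitions and a ``chopped $p$-adic'' (resp.\ $e_p$-$p$-adic) expansion of $n$, which is a specific realization of the ``coordinated bead moves'' you describe; so far so compatible.

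There is, however, one concrete step in your plan that cannot work. You propose, when no unipotent character fits, to ``pass to a Lusztig series attached to a carefully chosen semisimple $\ell'$-element.'' That escape route is closed: as the paper records, for a non-defining prime $p$ every irreducible character of $\blprinc p G$ lies in the union of Lusztig series $\mathcal E_p(G,1)=\bigcup_t\mathcal E(G,t)$ over semisimple $p$-elements $t$, and likewise for $q$. Since distinct Lusztig series are disjoint, a character lying in both $\mathcal E_p(G,1)$ and $\mathcal E_q(G,1)$ must lie in a single series $\mathcal E(G,t)$ with $t$ simultaneously a $p$-element and a $q$-element, forcing $t=1$. Thus $\irr{\blprinc p G}\cap\irr{\blprinc q G}\subseteq\Uch(G)$, and there is no non-unipotent candidate to retreat to. The content of the classical-groups part of the argument is precisely that a unipotent character with the required cocore/core and coprime degree \emph{always} exists; a fallback outside $\Uch(G)$ is a dead end.

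A smaller but real miscalibration: for $\alt n$ the failure of the $\sym n$ construction is not confined to ``small values of $n$ and a handful of exceptional primes.'' When $q=2$ and $p$ is a Fermat prime with $n=p$, or a Mersenne prime with $n=p+1$ (and the isolated case $n=9$, $p=3$), the set $\irrprime p{\blprinc p{\sym n}}\cap\irrprime 2{\blprinc 2{\sym n}}$ collapses to $\{\mathbbm 1_{\sym n},\sgn\}$, both of which restrict trivially to $\alt n$; one then needs a \emph{self-conjugate} almost hook partition whose degree has $2$-part exactly $2$, so that the two constituents of the restriction to $\alt n$ have odd degree. More generally, whenever the $\sym n$-construction yields $\mu=(1^n)$ (which happens across several residue patterns of $n$ modulo $p$ and $q$, not just finitely often), a separate argument is required. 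These are structured infinite families, not a finite list to be handled by computer.
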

\noindent Thus, they predict an existence criterion for nilpotent Hall $\{p,q\}$-subgroups of~$G$.
For a purely group theoretical criterion, which is still encoded in the character table, see \cite[Theorem B]{nilpqHall}.
Conjecture (NRS) is related to a conjecture of Liu et al. who conjecture the same conclusion given the stronger assumption $\irr {\blprinc p G} \cap \irr {\blprinc q G}=\{\mathbbm{1}_G\}$ \cite[Conjecture 1.3]{LWXZ}. Thus, the above conjecture implies Liu et al.'s conjecture. Note that the converse of both conjectures is false as the dihedral group $D_{2pq}$ of order $2pq$ with odd primes $p$, $q$ provides a counterexample.\newline
Using the classification of finite simple groups (CFSG), both Liu et al. and \linebreak Navarro--Rizo--Schaeffer~Fry reduce their conjectures to the same problem for almost simple groups $A$ (with additional structural properties), see \cite[Theorem~1.4]{LWXZ} and \cite[Theorem D (a)]{NRS}. Again using the CFSG, they also prove the reduced conjectures in many cases, in particular if $2\in\{p,q\}$, see \cite[Theorem 1.5]{LWXZ} and \cite[Theorem~D~(b)]{NRS}. However, there are still some  minor issues in the case of symmetric and alternating groups in Conjecture (NRS) which are addressed later in this article. In total, both conjectures remain open if the socle of $A$ is a finite simple classical group and the primes $p$ and $q$ are odd.

In this paper we show the following main theorem, thus contributing to the solution of Conjecture~(NRS).

\begin{thmalph}\label{thm:A}
	Let $G$ be a finite group, $p$ and $q$ distinct primes such that $pq$ divides~$\abs G$. Then we have $\irrprime p {\blprinc p G} \cap \irrprime q {\blprinc q G}\neq \{\mathbbm{1}_G\}$ if $G$ is one of the following:
	\begin{enumerate}[\rm(a)]
		\item a symmetric group $\sym n$ of degree $n\geq 3$, %
		\item an alternating group $\alt n$ of degree $n\geq 4$, %
		\item a finite simple group of Lie type $\type{A}_{n-1}$ or $\tw{2}{\type A_{n-1}}$, with $n\geq 2$, type $\type{B}_n$, with $n\geq 2$, or type $\type{C}_n$, with $n\geq 3$, defined in a characteristic distinct from $p$ and $q$.
	\end{enumerate}
	In particular, Conjecture {\rm{\textup(NRS\textup)}} holds for these groups.
\end{thmalph}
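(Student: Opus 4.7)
My plan is to treat each of the three families in turn and, for each group $G$ in them, to construct an explicit non-trivial irreducible character $\chi \neq \mathbbm{1}_G$ of degree coprime to $pq$ that lies in both $\blprinc p G$ and $\blprinc q G$. In all three settings, block membership is governed by a combinatorial ``core'' condition and degree-coprimeness is read off the same combinatorial datum, so the task reduces in each family to a compatibility question between two cores and one divisibility condition.

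For $\sym n$ I would parametrise $\irr{\sym n}$ by partitions of $n$. By Nakayama's conjecture, $\chi^\lambda \in \blprinc p {\sym n}$ if and only if $\lambda$ has the same $p$-core as $(n)$, and analogously for $q$. Degrees are given by the hook-length formula, so $p \nmid \chi^\lambda(1)$ becomes a transparent condition on the $p$-core and $p$-quotient of $\lambda$. The first attempt is the family of hook partitions $(n-k,1^k)$, whose degrees $\binom{n-1}{k}$ are easy to analyse via Kummer's theorem: simultaneous control of carries in the base-$p$ and base-$q$ expansions of $n-1$ yields coprimeness of the binomial to $pq$, and a short case analysis shows that for a suitable $k$ the resulting hook has the right $p$- and $q$-cores. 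Finitely many small ranks, or ranks where no hook partition works, would be handled by hand using two-row partitions $(n-k,k)$. The alternating case then follows from the symmetric one: since $p$ and $q$ are odd, a non-self-conjugate witness $\chi^\lambda$ on $\sym n$ restricts irreducibly to $\alt n$ and preserves both its degree and its principal-block memberships; a short extra argument deals with the rare situation in which the only candidate is self-conjugate.

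For the simple classical groups of type $\type A$, $\tw{2}\type A$, $\type B$, $\type C$ over $\mathbb F_{q_0}$ with $q_0$ a power of a prime $\ell \notin \{p,q\}$, I would work inside the ambient finite reductive group $\mathbf G^F$ and descend to the simple quotient at the end. For an odd prime $r \nmid q_0$, $e$-Harish-Chandra theory (Brou\'e--Malle--Michel, Cabanes--Enguehard) implies that the unipotent characters in $\blprinc r {\mathbf G^F}$ are precisely those whose Lusztig symbol shares the $e_r$-core of the trivial symbol, where $e_r = \ord_r(q_0)$ (with the usual adjustment for $\tw{2}\type A$); degree coprimeness to $r$ is read off the symbol via Olsson's hook formula. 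The plan is therefore to exhibit a non-trivial unipotent symbol whose $e_p$-core and $e_q$-core both agree with those of the trivial one and whose corresponding character degree is coprime to $pq$, produced by attaching a hook of size a common multiple of $e_p$ and $e_q$ at a carefully chosen position. Descent to the simple quotient uses that the trivial character is invariant under the centre $\zent{\mathbf G}^F$ and that such invariance is preserved by the explicit construction.

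The main obstacle will be this last combinatorial step. When $e_p$ and $e_q$ are both close to the rank $n$ (and especially when $e_p = e_q$ or one divides the other) the supply of movable hooks on a symbol shrinks, and forcing degree-coprimeness simultaneously for both primes becomes delicate; I expect these tight configurations, together with the passage from $\mathbf G^F$ to the simple quotient when $|\zent{\mathbf G}^F|$ is not coprime to $pq$, to require a tailored argument and an explicit check for a short list of small ranks.
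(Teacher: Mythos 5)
Your overall strategy --- explicit combinatorial witnesses, Nakayama-type core conditions for block membership, hook-length analysis for degrees, and restriction for $\alt n$ --- is in the same spirit as the paper, but several of your concrete choices would not carry through as stated.

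For $\sym n$ you restrict to hook partitions $(n-k,1^k)$ with degrees $\binom{n-1}{k}$, with two-row partitions as a fallback. The paper needs a genuinely larger family of almost hook partitions $(1^{n-k-\ell-1},k+1,\ell)$: in all of its Cases~II and~III the chosen partition has two non-trivial rows together with a long column of $1$'s and is neither a hook nor a two-row shape, so your candidate set misses it. Your Kummer-type analysis (carries in base-$p$ and base-$q$ of $n-1$) is close to the Giannelli--Schaeffer Fry--Vallejo method, which the paper explicitly contrasts with its own and states does not generalise to the Lie type case. The paper's ``chopped $p$-adic expansion'' of $n$, cutting the $p$-adic expansion at the point where the residue $r=n\bmod q$ fits, is the mechanism that simultaneously produces the correct almost hook and extends verbatim to the $e_p$-$p$-adic setting needed later.

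Your treatment of $\alt n$ contains a genuine error: you write ``since $p$ and $q$ are odd'', but the theorem covers $2\in\{p,q\}$, and the irreducible-restriction argument for non-self-conjugate $\lambda$ does not rely on parity of the primes at all. More seriously, the problematic case is not only when the $\sym n$ witness is self-conjugate: it is when the witness is $(1^n)$, which is not self-conjugate but restricts to the trivial character of $\alt n$. The paper needs a substantial extra case analysis here (Cases 2.1--2.4 and 3.1--3.3), including self-conjugate almost hooks whose constituents have degree $d^\lambda/2$ with controlled $2$-part; a ``short extra argument'' significantly undersells this step.

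For types $\type{B}_n$ and $\type{C}_n$ your description of block membership is incomplete: for an odd non-defining prime $r$, whether the block is read off the $e_r$-\emph{core} or the $e_r$-\emph{cocore} of the Lusztig symbol depends on whether $r$ is linear or unitary for $\QQ$ (equivalently whether $\ord_r(\QQ)$ is odd or even). You mention only cores, which gives the wrong answer for unitary primes, and the relevant parameter in these types is $\ord_r(\QQ^2)$ rather than $\ord_r(\QQ)$. Finally, ``attaching a hook of size a common multiple of $e_p$ and $e_q$'' is too coarse: such a hook need not fit, and even when it does this gives no simultaneous control of divisibility of the degree by $p$ and by $q$. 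The paper instead builds symbols directly from the $b,T$ decomposition and splits into four subcases according to linearity/unitarity of $p$ and of $q$ together with parities of $T/e_p$ and $R/e_q$, then analyses the degree factor by factor via a $\Psi^{\pm}$-calculus; none of this structure is visible in your sketch.
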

\noindent Thus, we reprove (sometimes in a slightly different, more generic, way) and extend earlier results of Navarro--Rizo--Schaeffer~Fry in the case $2\in\{p,q\}$, by generalizing many of their ideas. 
Theorem~\ref{thm:A} was first announced at a conference in Florence in June 2024 and also presented in Aachen in December 2024. Furthermore, statements (a) and (b) were already presented in Aachen in December 2022.

We also derive the following Corollary. %
\begin{coroalph} \label{coro:B}
	Let $n\geq 5$. Conjecture {\rm{(NRS)}} holds true for all almost simple groups with socle \textup(isomorphic to\textup) $\alt n$.
\end{coroalph}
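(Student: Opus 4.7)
The plan is to deduce Corollary~\ref{coro:B} from Theorem~\ref{thm:A} by inspecting the list of almost simple groups with socle $\alt n$. For $n \geq 5$ with $n \neq 6$, we have $\out(\alt n) \cong C_2$, so the only such almost simple groups are $\alt n$ itself and $\sym n$. Both satisfy Conjecture~(NRS) by parts~(b) and~(a) of Theorem~\ref{thm:A}, and Corollary~\ref{coro:B} follows immediately in this range.

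The only real work is therefore the exceptional case $n = 6$, where $\out(\alt 6) \cong C_2 \times C_2$ and three further almost simple groups arise: $\PGL{2}{9}$, $\mathrm{M}_{10}$, and $\mathrm{P}\Gamma\mathrm{L}_2(9) \cong \aut{\alt 6}$. For each of these, \cite[Theorem~D~(b)]{NRS} already handles Conjecture~(NRS) whenever $2 \in \{p,q\}$. Since $|\alt 6| = 2^3 \cdot 3^2 \cdot 5$, only the pair $(p,q) = (3,5)$ remains, and any candidate character must have degree coprime to $15$, hence a power of $2$.

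To produce such a character I would start from the two irreducible characters $\chi_1, \chi_2 \in \irr{\alt 6}$ of degree $8$, which lie in both the principal $3$-block and the principal $5$-block of $\alt 6$ (a fact that is established en route to the proof of Theorem~\ref{thm:A}(b) for $\alt 6$, and can also be read off the ATLAS). For each overgroup $H \in \{\PGL{2}{9},\, \mathrm{M}_{10},\, \mathrm{P}\Gamma\mathrm{L}_2(9)\}$ I would then determine whether the action of $H/\alt 6$ fixes or swaps the set $\{\chi_1, \chi_2\}$. In the fixed case, $\chi_1$ extends to an irreducible character of $H$ of degree $8$; in the swapped case, $\mathrm{Ind}_{\alt 6}^H \chi_1$ is irreducible of degree $16$. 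In either situation the resulting character has degree coprime to $15$, and a short block-theoretic check (using that $[H : \alt 6] = 2$ is coprime to both $3$ and $5$, so the relevant extensions or inductions of characters in the principal $\ell$-block of $\alt 6$ land in the principal $\ell$-block of $H$ for $\ell \in \{3,5\}$) confirms that it lies in both $\blprinc 3 H$ and $\blprinc 5 H$.

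The main obstacle is precisely the exceptional outer automorphism group of $\alt 6$, which forces these three additional almost simple groups to be handled individually rather than as part of the uniform treatment provided by Theorem~\ref{thm:A}. Since their orders are all at most $1440$, however, the outstanding verification ultimately amounts to a short character-table computation.
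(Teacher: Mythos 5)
Your overall route is the same as the paper's: for $n\neq 6$ reduce to $A\in\{\alt n,\sym n\}$ via H\"older's theorem and invoke Theorem~\ref{thm:A}; for $n=6$ handle the three additional almost simple groups $\PGL{2}{9}$, $M_{10}$, and $\aut{\alt 6}$ separately. The paper disposes of these three simply by a direct computation with GAP's character table library over all relevant prime pairs, which is also where your argument effectively lands. Two points in your attempted Clifford-theoretic shortcut should be corrected, though. First, $[\aut{\alt 6}:\alt 6]=4$, not $2$: the index-$2$ overgroups of $\alt 6$ inside $\aut{\alt 6}$ are $\sym 6$, $\PGL{2}{9}$, and $M_{10}$, while $\aut{\alt 6}$ itself sits four steps up. Second, the parenthetical claim that $\ell\nmid[H:\alt 6]$ forces extensions (or inductions) of a character in $\blprinc{\ell}{\alt 6}$ to land in $\blprinc{\ell}{H}$ is not valid: when $\chi_1$ extends, the two extensions $\tilde\chi$ and $\tilde\chi\lambda$ both lie over $\chi_1\in\blprinc{\ell}{\alt 6}$ and hence lie in blocks of $H$ covering $\blprinc{\ell}{\alt 6}$, but in general there is more than one such block, and multiplication by $\lambda$ can move characters between them. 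In particular, a priori the extension lying in $\blprinc{3}{H}$ need not coincide with the one lying in $\blprinc{5}{H}$, so the conclusion that some extension sits in both principal blocks simultaneously still has to be verified. Since you concede at the end that a short character-table computation settles the matter, these gaps do not change the outcome, but the ``block-theoretic check'' as described would not by itself establish the claim.
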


 An auxiliary result  for symmetric groups is Theorem~\ref{thm:C}, the characterization whenever there are only linear characters (i.e., the trivial and the sign character $\sgn$) in $\irrprime p {\blprinc{p}{\sym n}} \cap \irrprime q {\blprinc{q}{\sym n}}$. 

\begin{restatable*}{thmalph}{thmpqlinAnSn} \label{thm:C}
	Let $n\geq 4$ and $p$ and $q$ be two distinct primes such that $q<p\leq n$. Then, we have $\irrprime p {\blprinc{p}{\sym n}} \cap \irrprime q {\blprinc{q}{\sym n}}=\{\mathbbm{1}_{\sym n},\sgn\}$ if and only if $q=2$ and one of the following holds:
	\begin{itemize}
		\item $n=9$ and $p=3$, or
		\item $n=p$ is a Fermat prime, or
		\item $n-1=p$ is a Mersenne prime.
	\end{itemize}
	In these cases, we have $\irrprime p {\blprinc{p}{\sym n}} \cap \irrprime 2 {\blprinc{2}{\sym n}}=\irrprime{\{2,p\}}{\sym n}$.
	Moreover, there exist distinct irreducible characters $\psi_{+}, \psi_{-}\in \irr {\alt n}$ of~$\alt n$ inducing the same irreducible character of~$\sym n$ such that $\irrprime p {\blprinc{p}{\alt n}} \cap \irrprime 2 {\blprinc{2}{\alt n}}=\{\mathbbm{1}_{\alt n},\psi_+, \psi_-\}$.%
\end{restatable*}
\noindent Theorem~\ref{thm:C} is similar to \cite[Theorem 2.8]{GSV} where the blocks are not prescribed, and the two results turn out to be equivalent.

Let us come back to Theorem~A and Conjecture (NRS). Notice that, for an almost simple group $A$ the strategy to show Conjecture~(NRS) is almost always\footnote{For the sporadic groups $A\in\{J_1, J_4\}$ with $\{p,q\}=\{3,5\}$ for $J_1$, or $\{p,q\}=\{5,7\}$ for $J_4$, we have $\irr{\blprinc p A} \cap \irr {\blprinc q A}=\{\mathbbm{1}_A\}$ by \cite[Proposition 3.5]{BesZha}. This can easily be checked with \cite{GAP}.}, like here, a proof by contraposition. Thus, we have to find a non-trivial irreducible character $\chi$ lying in $\irrprime p {\blprinc p A} \cap \irrprime q {\blprinc q A}\setminus \{\mathbbm{1}_A\}$. This is now a two-sided problem, one has to look for an irreducible character $\chi$ lying in the principal blocks for both primes $p$ and $q$ (block conditions) which at the same time has degree not divisible by $p$ nor $q$ (degree conditions).\newline
In their paper, Giannelli--Schaeffer Fry--Vallejo examine the set $\irrprime {\{p,q\}} G$ for any finite group $G$, where $\irrprime {\{p,q\}} G\coloneqq \irrprime p G \cap \irrprime q G$. They show, using the CFSG, that $\irrprime {\{p,q\}} G \neq \{\mathbbm{1}_G\}$ if and only if $G\neq \{1\}$ is non-trivial \cite[Theorem A]{GSV}. This isolates the degree  conditions. \newline
The other part of our problem, the block problem, is addressed, e.g., for symmetric and alternating groups in \cite[Proposition 2.1]{BMO} and \cite[Proposition 3.2]{BesZha}, respectively. Here, the authors explicitly construct a non-trivial character lying in both the principal \linebreak $p$-~and $q$-block.
Moreover, for any non-abelian finite simple group $G$, except for some sporadic groups\footnote{Here, the exceptions are again $G\in\{J_1, J_4\}$ with $\{p,q\}=\{3,5\}$ for $J_1$, and $\{p,q\}=\{5,7\}$ for $J_4$.}, Brough--Liu--Paolini show that the intersection $\irr {\blprinc{p}{G}}\cap \irr {\blprinc{q}{G}} $ is non-trivial \cite[Lemma 3.3]{BLP}. However, for many families of finite simple groups of Lie type, including the classical groups, their proof is not constructive, so there is no control on the degree of the chosen non-trivial unipotent character in $\irr {\blprinc{p}{G}}\cap \irr {\blprinc{q}{G}}$.
Note that both the results of Giannelli--Schaeffer Fry--Vallejo and Brough--Liu--Paolini make use of the CFSG. \newline
In general, bringing the block and degree conditions together requires more work.

Now, the strategy to prove Theorem~\ref{thm:A} is to explicitly construct a character of $G$ with the desired properties, going type-by-type and case-by-case. In Section~\ref{sec:1}, we deal with the situation $G\in\{\sym n, \alt n\}$ and then move on to classical $G$ in Section~\ref{sec:2}. Note that each type of group requires a separate case distinction.
In each part, we first develop the number-theoretical framework used in the proofs (extending ideas of NRS), and examine degrees and blocks of certain (unipotent) characters. For classical groups, the main ingredient in our proof is Lusztig’s combinatorial parametrization of unipotent characters together with known results on character degrees and blocks of such characters. We also develop number-theoretic tools for the analysis of unipotent character degrees. %

In this article we often use boxes to highlight both important assumptions or definitions.

\section*{Acknowledgments}
I dedicate this work to my late teacher Christine Bessenrodt who introduced me to the world of representation theory, with a particular emphasis on symmetric and alternating groups.
Moreover, I thank my PhD advisor Gunter Malle for careful reading of this manuscript and useful comments which lead to improvements of this article. This work was financially supported by the DFG (German Research Foundation), Project-ID~286237555.

\section{Symmetric and alternating groups}\label{sec:1}
In this part, we prove Theorem~\ref{thm:A} for symmetric and alternating groups. The first three sections are of preparatory nature, then we address the symmetric groups in Section~\ref{sec:sym} followed by the alternating groups in Section~\ref{sec:alt}, which also includes the proof of Corollary~\ref{coro:B}. The auxiliary Theorem~\ref{thm:C} is proven in Section~\ref{sec:smallintersection}.

\subsection{Background}\label{sec:background}
Here, we gather standard facts on the representation theory of symmetric groups.
The results recalled in this section can be found, e.g., in \cite{JK} or \cite{OlssonCRT}.

Let $n\geq 2$ be a positive integer.
A partition is a sequence $\lambda=(\lambda_1,\dots, \lambda_l)$ of positive integers $\lambda_i$, called \emph{parts} of~$\lambda$, such that $\lambda_1\geq \lambda_2\geq \dots\geq \lambda_l\geq 1$. We say that $\lambda$ is a partition of (size) $n$ if $\sum_{i=1}^l \lambda_i=n$, and we write $\lambda\vdash n$. The \emph{trivial partition} of~$n$ is $(n)$.

\begin{notation} We use the \emph{exponential notation} for a partition $\lambda=(1^{a_1}, 2^{a_2},\dots, n^{a_n})$ of size~$n$ where $a_i$ denotes the multiplicity of~$i$ appearing as a part of~$\lambda$, for all $1\leq i\leq n$. Exponents equal to~$1$ as well as parts $i$ with zero multiplicity are frequently (but not always) omitted. For notational convenience, we might allow a partition to have a (or multiple) part(s) equal to $0$. If we are given a partition, say $(1^{a_1},\dots,k^{a_k},\dots)$, in terms of a parameter $k$ with $1\leq k\leq n$, and we specialize, say $k=1$, the partition is understood to be $(1^{a_1+a_k}, \dots)$.
	Finally, note that in exponential notation parts of~$\lambda$ appear in increasing instead of decreasing order.
\end{notation}

Recall that the set of irreducible characters $\irr{\sym n}=\{\chi^\lambda\mid  \lambda \vdash n\}$ of the symmetric group $\sym  n$ is labeled naturally by partitions of~$n$ in the following canonical way: \newline
The two linear characters of~$\sym n$, the trivial character $\mathbbm{1}_{\sym n}$ and the sign character, denoted~$\sgn$, correspond to the partitions $(n)$ and $(1^n)$, respectively, i.e.,  we have $\mathbbm{1}_{\sym n}=\chi^{(n)}$ and $\sgn= \chi^{(1^n)}$.
Moreover, given a partition $\lambda \vdash n$, the degree $d^\lambda\coloneqq\chi^\lambda(1)$ of the corresponding character $\chi^\lambda$ of~$\sym n$ can be computed with a simple formula, the so-called \emph{hook formula} \cite[2.3.21~Theorem]{JK}, and is determined by the \emph{hook lengths} of~$\lambda$.
Then we say that the partition $\lambda$ itself has \emph{degree} $d^\lambda$.

Given a prime $p$ and partitions $\lambda,\mu\vdash n$, the classical ``Nakayama Conjecture'' (which is indeed a theorem) %
states that the characters $\chi^\lambda$ and $\chi^\mu$ lie in the same $p$-block of~$\sym n$ if and only if the \emph{$p$-cores} of the partitions $\lambda$ and $\mu$ agree \cite[6.1.21]{JK}. Thus, $p$-cores of partitions of $n$ label the $p$-blocks of~$\sym n$.
The \emph{$p$-core} of a partition is again a partition %
obtained from $\lambda$ by removing all its \emph{$p$-hooks}, see \cite[Section 2.7]{JK}.\newline
In particular, the character $\chi^\lambda$ belongs to the principal $p$-block of~$\sym n$ (i.e., the block of the trivial character $\mathbbm{1}_{\sym n}$) if and only if the $p$-core of~$\lambda$ equals the $p$-core of the partition~$(n)$. Note that the $p$-core of~$(n)$ is the partition $(s)$, if $s$ denotes the residue of~$n$ modulo $p$.
Also notice that the $p$-core of the partition $(1^{n})$ is $(1^s)$, hence the sign character lies in the block labeled by~$(1^s)$.

\begin{definition} \label{def:pprincSn} Let $n\geq 2$ and $p$ a prime. %
	We say that a partition $\lambda\vdash n$ is \emph{$p$-principal} for $\sym n$
	if the corresponding character $\chi^\lambda$ lies in the principal $p$-block of~$\sym n$.\newline
	If $q$ is a prime distinct from $p$, we say that $\lambda$ is \emph{$\{p,q\}$-principal} for $\sym n$ if $\lambda$ is $p$-principal as well as $q$-principal for $\sym n$.
\end{definition}

\subsection{Almost hook partitions}\label{sec:almhooks}
In this section, we look at partitions of a specific (almost hook like) shape, re-express their degree obtained from the hook formula, and in special situations determine whether
they are $p$-principal for a prime number~$p$. %

\begin{definition} \label{def:almhooks} Let $n\geq 2$ be a positive integer.
	For the purpose of this article, a partition $\lambda\vdash n$ of the form $ \lambda=(1^{n-k-\ell-1},k+1,\ell)$ with $0\leq k< \ell< n-k\leq n$,
	or $\lambda=(n)$, is called
	an \emph{almost hook partition} (of size $n$).\newline
	If $\lambda$ is an almost hook partition of size $n$ with $k=0$, or $\lambda=(n)$, it is called a \emph{hook partition} (of size $n$). %
	Note that for $k=0$ and $\ell\in\{0,1\}$, we obtain the same hook partition $(1^n)$.
\end{definition}

We apply the hook formula to express the degree of almost hook partitions in two different ways.
\begin{lemma}\label{lem:degalmosthookpartition}
	Let $n\geq 2$.
	The degree $d^\lambda$ of the almost hook partition
	$\lambda=(1^{n-k-\ell-1},k+1,\ell)$, where $0\leq k< \ell< n-k\leq n$, is
	$$d^\lambda=\frac{\prod\limits_{i=1}^{k} (n-k+i) \cdot \prod\limits_{\substack{i=1\\ i\neq \ell-k}}^{\ell} (n-k-i) }{\prod\limits_{i=1}^{k} i \cdot \prod\limits_{\substack{i=1 \\ i\neq \ell-k}}^\ell i }= \frac{\prod\limits_{\substack{i=1\\i\neq \ell-k}}^{\ell} (n-\ell+i) \cdot \prod\limits_{i=1}^{k} (n-\ell-i) }{\prod\limits_{\substack{i=1 \\ i\neq \ell-k}}^\ell i \cdot \prod\limits_{i=1}^{k} i  }.$$
	In particular, if $k=0$, we have $d^\lambda=\frac{\prod_{i=1}^{\ell-1} (n-i)}{\prod_{i=1}^{\ell-1}i}=\frac{\prod_{i=1}^{\ell-1} (n-\ell+i)}{\prod_{i=1}^{\ell-1}i}$.
\end{lemma}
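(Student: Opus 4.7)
The plan is to read off the hook lengths of $\lambda$ directly from its Young diagram and then rewrite the output of the hook formula in the two claimed forms by splitting a single block of consecutive integers in two different ways.

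Since $\ell\geq k+1$, the partition $\lambda = (1^{n-k-\ell-1}, k+1, \ell)$ written with parts in decreasing order equals $(\ell, k+1, 1^{n-k-\ell-1})$, and its Young diagram has $n-k-\ell+1$ rows. A cell-by-cell computation of hook lengths shows that the principal hook at $(1,1)$ has length $n-k$ and the hook at $(2,1)$ has length $n-\ell$; the remaining cells in row $1$ give hook lengths running through $\{1,2,\dots,\ell\}\setminus\{\ell-k\}$, those in row $2$ give $\{1,\dots,k\}$, and the $n-k-\ell-1$ single-cell rows below contribute $\{1,2,\dots,n-k-\ell-1\}$. Multiplying these together, the product of all hook lengths of $\lambda$ equals
\[
(n-k)(n-\ell)\cdot\frac{\ell!}{\ell-k}\cdot k!\cdot (n-k-\ell-1)!,
\]
so the hook formula gives
\[
d^\lambda \;=\; \frac{n!\,(\ell-k)}{(n-k)(n-\ell)\cdot k!\,\ell!\,(n-k-\ell-1)!}.
\]

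To derive the first claimed identity, I would write $n!/(n-k-\ell-1)!$ as the product of the $k+\ell+1$ consecutive integers $n-k-\ell, n-k-\ell+1, \dots, n$ and split it at $n-k$: the $k$ factors strictly above $n-k$ form $\prod_{i=1}^{k}(n-k+i)$, while the $\ell+1$ factors $n-k, n-k-1,\dots,n-k-\ell$ equal $(n-k)\cdot\prod_{i=1}^{\ell}(n-k-i)$, with the value $n-\ell$ appearing precisely at the index $i=\ell-k$. After cancelling $(n-k)$ and $(n-\ell)$ and absorbing $(\ell-k)/\ell!$ as $1/\prod_{i=1,\,i\ne\ell-k}^{\ell} i$, the first formula drops out. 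The second identity follows by rearranging the same product, splitting instead at $n-\ell$: the $\ell$ factors $n,n-1,\dots,n-\ell+1$ give $\prod_{i=1}^{\ell}(n-\ell+i)$ (with $n-k$ corresponding to $i=\ell-k$), and the remaining $k+1$ factors $n-\ell,n-\ell-1,\dots,n-k-\ell$ give $(n-\ell)\prod_{i=1}^{k}(n-\ell-i)$.

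The special case $k=0$ is then immediate by substitution: the products indexed by $i=1,\dots,k$ are empty, and the exclusion $i\ne\ell-k=\ell$ simply removes the factor $i=\ell$, so both formulas collapse to $\prod_{i=1}^{\ell-1}(n-i)/\prod_{i=1}^{\ell-1}i$, as stated. I do not foresee any serious obstacle here; the only delicate point is the bookkeeping, in particular keeping track of the fact that the index omitted from the products corresponds exactly to the factor $n-\ell$, which is in turn the hook cancelled by the cell $(2,1)$.
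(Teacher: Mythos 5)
Your proof is correct and follows essentially the same route as the paper: read off the hook lengths directly from the Young diagram, apply the hook formula, and then split the block of consecutive integers $n-k-\ell,\dots,n$ once at $n-k$ and once at $n-\ell$ to obtain the two expressions (with the $k=0$ case following by specialization). Your bookkeeping of the hook lengths in the single-cell rows as $1,\dots,n-k-\ell-1$ is in fact slightly more careful than the paper's displayed intermediate step, which has an off-by-one slip in its running index that cancels out in the final formula.
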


\begin{proof}
	By the hook formula, a partition $\mu\vdash n$ has degree $
		d^\mu=\frac{\prod_{i=1}^n i}{\prod_{h} {h}},
	$ where $h$ runs through the multiset of hook lengths of~$\mu$ \cite[2.3.21 Theorem]{JK}.
	Here, the hook lengths %
	of~$\lambda=(1^{n-k-\ell-1},k+1,\ell)$ are as follows:
	$1,\dots,n-k-\ell-1$ and 
	$1,\dots,k$ and %
	$1,\dots,\widehat{\ell-k},\dots \ell$ and $n-\ell$ and $n-k$.
	By $\widehat{\ell-k}$ we mean that the number $\ell-k$ is omitted.
	Inserting the hook lengths into the previous formula, we get the degree
	\begin{align*}\tag{a} \label{eq:degalmhook}
		d^\lambda
		=\frac{\prod\limits_{i=1}^n i}{ (n-\ell)\cdot(n-k)\cdot \prod\limits_{i=1}^{n-k-\ell} i \cdot \prod\limits_{i=1}^{k} i \cdot \prod\limits_{\substack{i=1 \\ i\neq \ell-k}}^\ell i}
		= \frac{\prod\limits_{\substack{i=n-k-\ell                                                                                          \\i\neq n-\ell,n-k}}^{n} i }{\prod\limits_{i=1}^{k} i \cdot \prod\limits_{\substack{i=1 \\ i\neq \ell-k}}^\ell i }.
	\end{align*}
	Splitting the product in the numerator at the omitted number $n-k$ and suitable index shifts yield the expression
	$$
		d^\lambda
		=\frac{\prod\limits_{i=n-k+1}^{n} i \cdot \prod\limits_{\substack{i=n-k-\ell \\ i\neq n-\ell}}^{n-k-1} i }{\prod\limits_{i=1}^{k} i \cdot \prod\limits_{\substack{i=1 \\ i\neq \ell-k}}^\ell i }
		= \frac{\prod\limits_{i=1}^{k} (n-k+i) \cdot \prod\limits_{\substack{i=1\\ i\neq \ell-k}}^{\ell} (n-k-i) }{\prod\limits_{i=1}^{k} i \cdot \prod\limits_{\substack{i=1 \\ i\neq \ell-k}}^\ell i }.
	$$
	Analogous manipulations of the last expression in Equation~\eqref{eq:degalmhook} with the role of~$n-k$ played by~$n-\ell$ (split the product in the numerator at~$n-\ell$ instead of~$n-k$) yield:
	$$d^\lambda
		= \frac{\prod\limits_{\substack{i=n-\ell+1\\i\neq n-k}}^{n} i \cdot \prod\limits_{i=n-k-\ell}^{n-\ell-1} i }{\prod\limits_{\substack{i=1 \\ i\neq \ell-k}}^\ell i \cdot \prod\limits_{i=1}^{k} i  }
		= \frac{\prod\limits_{\substack{i=1\\i\neq \ell-k}}^{\ell} (n-\ell+i) \cdot \prod\limits_{i=1}^{k} (n-\ell-i) }{\prod\limits_{\substack{i=1 \\ i\neq \ell-k}}^\ell i \cdot \prod\limits_{i=1}^{k} i  }.
	$$
	This proves the two formulas for $d^\lambda$. The second statement follows immediately by inserting $k=0$.
\end{proof}

Now, we examine whether an (almost) hook partition is $p$-principal for $\sym n$, i.e.,  corresponds to an irreducible character of~$\sym n$ in the principal $p$-block, given a prime $p$. We make use of the next two lemmas, the first one of
which is easily proven using the $e$-abacus configuration associated to a partition and a positive integer $e$.\newline
An $e$-abacus is a tool to determine the $e$-core of a partition $\lambda$, see \cite[Chapter 2.7]{JK}. Here, one would use the abacus with $e$ runners whose initial bead configuration is given by the first column hook lengths of~$\lambda$. %

\begin{lemma} \label{lem:partitionecoreinsteps}
	Let $\lambda$ be a partition and $m,e\in\Z_{\geq 1}$. Let $\mu$ be another partition obtained from $\lambda$ by removing an $me$-hook. Then, the $e$-cores of~$\mu$ and $\lambda$ agree.
\end{lemma}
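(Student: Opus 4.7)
The plan is to argue via the $e$-abacus, which encodes exactly the information about $e$-cores that is needed here. First I would fix an $e$-abacus display for $\lambda$, with $e$ runners indexed $0,1,\ldots,e-1$, whose initial bead configuration is given by the first-column hook lengths (beta-numbers) of $\lambda$. The key classical fact, which I would cite from \cite[Chapter 2.7]{JK}, is that removing a $k$-hook from a partition corresponds to moving a single bead from some occupied position $i$ of the abacus to an unoccupied position $i-k$, and conversely every such legal bead-slide corresponds to a hook removal.

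Next I would specialize this to the case $k=me$. Since positions on the abacus that differ by a multiple of $e$ lie on the same runner, the bead moving from position $i$ to position $i-me$ stays on its original runner and simply shifts up by $m$ slots on that runner. Consequently the multiset of bead counts across the $e$ runners is preserved when passing from the abacus of $\lambda$ to the abacus of $\mu$.

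To conclude, I would invoke the abacus description of the $e$-core: iteratively removing $e$-hooks corresponds to pushing each bead as high as possible on its runner, and the resulting configuration, hence the $e$-core itself, is uniquely determined by the number of beads on each runner. Since $\lambda$ and $\mu$ have identical per-runner bead counts, their $e$-cores coincide, which is exactly the assertion of the lemma.

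The only real care needed is to set up the abacus conventions consistently (how positions are indexed, which direction counts as ``up,'' and which position corresponds to which runner); once this bookkeeping is fixed the result is essentially immediate, so I do not anticipate a substantive obstacle. An alternative route would be to induct on $m$ by factoring the $me$-hook removal into $m$ successive $e$-hook removals, but this requires first establishing that an $me$-hook can always be decomposed in this way, which is itself most cleanly proved on the abacus; the direct abacus argument above is therefore preferable.
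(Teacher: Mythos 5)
Your abacus argument is correct and is exactly the approach the paper indicates: the paper states the lemma "is easily proven using the $e$-abacus configuration" with bead positions given by first-column hook lengths, and your proof fills in the standard details (an $me$-hook removal is a bead slide of $me$ steps, which stays on its runner, so per-runner bead counts and hence the $e$-core are unchanged). No gap; this matches the intended proof.
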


The next lemma is immediate if one considers the Young diagram of~$\lambda$ to determine its $p$-core and uses the previous lemma with $p$ playing the role of~$e$.

\begin{lemma} \label{lem:almosthookpartitionpprincipal}Let $n\geq 2$ and $p$ a prime. %
	The almost hook partition $(1^{n-k-\ell-1},k+1,\ell)$, where $0\leq k<  \ell< n-k\leq n$, is $p$-principal for $\sym n$ if $n-\ell$ or $n-k$ are divisible by~$p$.
\end{lemma}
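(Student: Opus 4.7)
The plan is to exhibit inside $\lambda=(1^{n-k-\ell-1},k+1,\ell)$ a rim hook whose length is divisible by $p$, remove it via Lemma~\ref{lem:partitionecoreinsteps}, and check that what remains has the same $p$-core as the partition $(n)$. Since the $p$-core of $(n)$ is $(s)$, where $0\le s<p$ is the residue of $n$ modulo $p$ (interpreted as the empty partition if $s=0$), it suffices to produce a $p$-core of $\lambda$ equal to $(s)$.

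First, I would isolate the two relevant hooks. The hook-length inventory performed in the proof of Lemma~\ref{lem:degalmosthookpartition} already shows that $\lambda$ has a hook of length $n-k$ at the cell $(1,1)$ and a hook of length $n-\ell$ at the cell $(2,1)$. The constraints $0\le k<\ell<n-k\le n$ force $n-k\ge 2$ and $n-\ell\ge 1$, so under either divisibility hypothesis of the lemma the chosen hook has length $mp$ with $m\ge 1$.

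Next, I would describe the rim hooks explicitly and identify what remains after their removal. For the hook at $(1,1)$, the rim cells running from the arm end $(1,\ell)$ to the leg end $(n-k-\ell+1,1)$ are exactly $(1,j)$ for $k+1\le j\le\ell$, $(2,j)$ for $1\le j\le k+1$, and $(i,1)$ for $3\le i\le n-k-\ell+1$; these sum to $n-k$ cells, and what survives is the single-row partition $(k)$. An analogous trace of the border yields that removing the $(n-\ell)$-rim hook at $(2,1)$ leaves $(\ell)$. Both descriptions remain correct in the edge cases $k=0$ (a hook partition, where the $(n-k)$-rim hook swallows all of $\lambda$ and leaves the empty partition $(0)$) and $n-k-\ell=1$ (no trailing column of ones, the third piece of the rim becoming vacuous).

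To conclude, Lemma~\ref{lem:partitionecoreinsteps} implies that $\lambda$ and $(k)$ share the same $p$-core in the first case, while $\lambda$ and $(\ell)$ share the same $p$-core in the second. Since $n\equiv k\pmod p$ under the first hypothesis and $n\equiv\ell\pmod p$ under the second, in either case this common $p$-core is $(s)$, which is also the $p$-core of $(n)$. Hence $\lambda$ is $p$-principal for $\sym n$ by Definition~\ref{def:pprincSn}. The only delicate step is the explicit rim-cell identification, but this reduces to tracing the border of $\lambda$ and presents no substantive obstacle.
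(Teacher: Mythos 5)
Your proof is correct and follows the same route the paper sketches: identify the $(1,1)$-hook of length $n-k$ and the $(2,1)$-hook of length $n-\ell$, remove the appropriate one as a rim hook (the paper's Lemma~\ref{lem:partitionecoreinsteps} with $e=p$), observe that the residual single-row partition $(k)$ or $(\ell)$ has $p$-core $(s)$, and conclude via the Nakayama criterion. The paper calls the lemma "immediate" from the Young diagram and Lemma~\ref{lem:partitionecoreinsteps}; your argument simply makes the rim-hook bookkeeping explicit, including the edge cases $k=0$ and $n-k-\ell=1$.
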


In the next section we develop the number-theoretical framework used in the proof of Theorem A for symmetric, and later also for alternating as well as projective special linear or unitary, groups.
\subsection{Chopped \texorpdfstring{$p$-adic}{p-adic} expansion} \label{padicsetup}
In this section, we define a couple of quantities attached to a positive integer~$n\geq 3$ and two distinct primes $p$ and $q$ smaller or equal to~$n$.
We define and investigate a decomposition of the $p$-adic expansion of~$n$ with respect to the second prime $q\leq n$, and a variation thereof.%

The following expansion of~$n$ in terms of~$p$ is well-known. Our convention for the \emph{$p$-adic expansion} of~$n$ is as follows.

\begin{convention} \label{def:padicexpansion}
	Let $n\geq 2$ and $p$ a prime. \newline
	The \emph{$p$-adic expansion} of~$n$ is the expansion
	$n=s+\sum_{i=1}^t c_i p^{a_i}$ with non-negative integers~$t\geq 0$ %
	and $0\leq s<p$ and positive integral coefficients $1\leq c_i\leq  p-1$ for all $1\leq i\leq t$, and exponents $1\leq a_1< a_2< \ldots< a_t$. %
	Note that $s$ is just the residue of~$n$ modulo $p$ and that all coefficients $c_i$ are non-zero, and the exponents are in ascending order.
\end{convention}

From now on, fix the following notation.
\begin{notation}Let $n\geq 3$ and $p$ and $q$ distinct primes such that \fbox{$p,q\leq n$}.
	Fix the $p$-adic expansion $n=s+\sum_{i=1}^t c_i p^{a_i}$ of~$n$ according to the above convention, and note that we have $t\geq 1$ since~$p\leq n$.\newline
	Write $n=mq+r$ with integers $m\geq 1$ %
	and $0\leq r<q$. Note that $r$ is the residue of~$n$ modulo $q$.
\end{notation}

Depending on $r$, the residue of~$n$ modulo $q$, we can define a decomposition of the $p$-adic expansion of~$n$, and we then examine properties of the parts of~$n$ obtained in this way, often provided that \fbox{$r\geq s$}.

Recall that any positive integer $k$ can be uniquely decomposed as a product $k=k_p k_{p'}$ of positive integers $k_p$, $k_{p'}\geq 1$ where $k_p$ is a power of~$p$ and $k_{p'}$ is not divisible by~$p$. Then, $k_p$ and $k_{p'}$ are called the \emph{$p$-part} and \emph{$p'$-part} of~$k$, respectively.

\begin{lemma}\label{lem:npqquantities}
	Let $n\geq 3$. %
	Then, the quantities
	\renewcommand{\arraystretch}{1.5}
	\begin{center}
		\begin{tabular}{|ll|}
			\hline
			$t_0$ & $\coloneqq \min\{i\in\{1,\dots, t\}  \mid  r< c_i  p^{a_i}\}$, \\
			$b$   & $\coloneqq s+\sum_{i=1}^{t_0-1} c_i  p^{a_i}$,                 \\
			$T$   & $\coloneqq \sum_{i=t_0}^t c_i  p^{a_i}$                        \\
			\hline
		\end{tabular}
	\end{center}
	\renewcommand{\arraystretch}{1}
	are well-defined, and we have $n=b+T$ with $T\neq 0$.
	Moreover, the following hold: 	%
	\begin{enumerate}[\rm(i)]
		\item If $k\leq b$ is a positive integer, then we have $(T+k)_p=k_p$.
		\item If $k\leq r$  is a positive integer, then we have $(T-k)_p=k_p$.
		\item If \fbox{$r\geq s$ and $r\neq 0$}, then we have $b<2r$.
	\end{enumerate}
\end{lemma}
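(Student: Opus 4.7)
The plan is to establish well-definedness first, then handle the three numbered items, using throughout the basic observation that the tail $T$ factors as $p^{a_{t_0}}T'$ with $T'$ coprime to $p$, and that $b$ is bounded above by $p^{a_{t_0}}$.

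\textbf{Well-definedness and $n = b+T$.} To see that $t_0$ exists, I will show that $r < c_t p^{a_t}$. The standard bound for $p$-adic expansions gives $s + \sum_{i=1}^{t-1} c_i p^{a_i} \leq p^{a_t}-1$, so $c_t p^{a_t} > n/2$. If on the contrary $r \geq c_t p^{a_t}$, then $r > n/2$, but $r = n - mq \leq n-q$ forces $q < n/2$ and hence $r < q < n/2$, a contradiction. Then $n = b+T$ is immediate from the definitions, and $T \geq c_{t_0} p^{a_{t_0}} > 0$.

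\textbf{Parts (i) and (ii).} The key is to write $T = p^{a_{t_0}} T'$ where $T' = c_{t_0} + \sum_{i>t_0} c_i p^{a_i - a_{t_0}} \equiv c_{t_0} \pmod p$, so $\gcd(T',p) = 1$. Since $b$ has $p$-adic expansion with exponents all $<a_{t_0}$, we have $b < p^{a_{t_0}}$; hence every positive $k \leq b$ satisfies $v_p(k)<a_{t_0}$. Writing $k = p^v k'$ with $\gcd(k',p)=1$, the factorization
\[
T+k = p^v\bigl(p^{a_{t_0}-v}T' + k'\bigr)
\]
has inner factor $\equiv k' \not\equiv 0 \pmod p$, giving $(T+k)_p = p^v = k_p$, which proves (i). For (ii) I will use $k \leq r < c_{t_0} p^{a_{t_0}} < p^{a_{t_0}+1}$, so $v_p(k) \leq a_{t_0}$; the case $v_p(k)<a_{t_0}$ is identical to (i) with a sign, while in the boundary case $v_p(k)=a_{t_0}$ one writes $k = p^{a_{t_0}} k'$ with $1 \leq k' < c_{t_0}$ and uses $T-k = p^{a_{t_0}}(T'-k')$ with $T'-k' \equiv c_{t_0} - k' \pmod p$. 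The anticipated main obstacle is precisely this boundary case: I need $c_{t_0}-k' \not\equiv 0 \pmod p$, which follows because $1 \leq k' < c_{t_0} \leq p-1$ forces $0 < c_{t_0} - k' < p$.

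\textbf{Part (iii).} Assume $r \geq s$ and $r \neq 0$. If $t_0 = 1$, then $b = s \leq r$, which is strictly less than $2r$ since $r \neq 0$. If $t_0 \geq 2$, the minimality of $t_0$ gives $r \geq c_{t_0-1} p^{a_{t_0-1}}$. Combined with the standard tail bound $b \leq (c_{t_0-1}+1)p^{a_{t_0-1}} - 1$ and the inequality $c_{t_0-1}+1 \leq 2c_{t_0-1}$ (valid because $c_{t_0-1} \geq 1$), this yields
\[
b < (c_{t_0-1}+1)p^{a_{t_0-1}} \leq 2\,c_{t_0-1}p^{a_{t_0-1}} \leq 2r,
\]
completing the proof.
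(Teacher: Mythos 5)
Your proof is correct and follows essentially the same route as the paper's: bound the partial sums to place $r$ below the top term, use $b<p^{a_{t_0}}=T_p$ with the explicit form of $T_{p'}$ to handle (i) and (ii) (the boundary case $k_p=p^{a_{t_0}}$ is the crux in (ii), just as in the paper), and for (iii) combine the partial-sum bound with the definition of $t_0$. Your well-definedness argument is a proof-by-contradiction variant of the paper's direct inequality chain, and your treatment of (iii) folds the paper's $t_0=2$ and $t_0\ge3$ subcases into one estimate via $b\le(c_{t_0-1}+1)p^{a_{t_0-1}}-1$, which is marginally cleaner but not a different idea.
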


\begin{proof}
	For $1\leq k\leq t$, let $S_k\coloneqq s + \sum_{i=1}^{k} c_i p^{a_i}$ be the $k$th partial sum of the $p$-adic expansion of~$n$, and set $S_0\coloneqq s$. Note that for $ k\neq t$ the $(k-1)$th partial sum $S_{k-1}$ is always smaller than the following term $c_k p^{a_k}$ in the $p$-adic expansion. Setting $a_0\coloneqq 0$ we indeed have
		\begin{equation*} \tag{a} \label{eq:partialsumestimate}
			\begin{aligned}
	S_{k-1} &= s p^{a_0} + \sum_{i=1}^{k-1} c_i p^{a_i} 
		\leq \sum_{i=0}^{k-1} (p-1)p^{a_i}\\
		& \leq \sum_{i=0}^{a_{k-1}} (p-1)p^i %
		=p^{a_{k-1}+1} - 1
		<p^{a_{k-1}+1} \leq p^{a_{k}},
	\end{aligned}
	\end{equation*}
	using the geometric sum formula.
	In particular, we see that $n-c_t p^{a_t}=S_{t-1}< p^{a_t}\leq c_t p^{a_t}$, thus concluding $\frac{n}{2}< c_t p^{a_{t_0}}$.
	Using $r<q$ we get $r<\frac{r+mq}{2}=\frac{n}{2}< c_t p^{a_t}$ and %
	the number $t_0=\min\{i\in\{1,\dots, t\}  \mid  r< c_i p^{a_i}\}$ is well-defined, and so are $b=s+\sum_{i=1}^{t_0-1} c_i p^{a_i}$ and $T= \sum_{i=t_0}^t c_i p^{a_i}$. Then we have $n=b+T$ and $T\neq 0$ as~$T>r\geq 0$.

	Note that we have $b=S_{t_0-1}< p^{a_{t_0}} $ by Equation~\eqref{eq:partialsumestimate}.
	Further $T=\sum_{i=t_0}^t c_i p^{a_i}$ has $p$-part $T_p=p^{a_{t_0}}$ and $p'$-part $T_{p'}=c_{t_0}+py$ with $y\coloneqq \sum_{i=t_0+1}^t c_i p^{a_i-a_{t_0}-1}\geq 0$.

	For (i) and (ii), %
	let $k \geq 1$, and note that if $k<p^l$ for some $l>0$, we certainly have~$k_p<p^l$.

	(i) Let $k\leq b$. %
	As seen above, we have $b<p^{a_{t_0}}$, and so the $p$-part of~$k$ satisfies \linebreak $k_p<p^{a_{t_0}}=T_p$. This implies
	$k+T=k_p k_{p'} +T_p T_{p'}=k_p (k_{p'} + \frac{T_p}{k_p} T_{p'})$. Noticing that $k_{p'} + \frac{T_p}{k_p} T_{p'}$ is coprime to~$p$ as~$p$ divides~$\frac{T_p}{k_p}$, but not $ k_{p'}$, we obtain $(T+k)_p=k_p$.

	(ii) Now, let $ k\leq r$. We want to show that $(T- k)_p=k_p$.
	Note that we have $r<T$ by definition of~$t_0$, thus $T-k$ is positive, %
	as~$r<c_{t_0} p^{a_{t_0}}<p^{a_{t_0}+1}$ holds we have $k_p\leq p^{a_{t_0}}=T_p$. %

	If $k_p<p^{a_{t_0}}=T_p$, we can argue as in~(i) to see that $T-k=k_p (\frac{T_p}{k_p} T_{p'}-k_{p'})$ has $p$-part $(T-k)_p=k_p$.

	Now, assume that $k_p=T_p=p^{a_{t_0}}$. Then we have $T-k=k_p(T_{p'}-k_{p'})$.
	Observe that we have $p^{a_{t_0}} k_{p'}=k_p k_{p'}=k \leq r < c_{t_0} p^{a_{t_0}}$, hence $0<k_{p'} < c_{t_0} <p$.
	Using that $T_{p'}=c_{t_0}+py$ with $y$ as above, we obtain that $T_{p'}-k_{p'}=T_{p'}-c_{t_0}+c_{t_0}-k_{p'}=p y+c_{t_0}-k_{p'}$ is not divisible by~$p$. We conclude that $T-k=k_p(T_{p'}-k_{p'})$ has $p$-part $k_p$.

	(iii) Assume that $r\geq s$ and $r\neq 0$.
	We claim that we have $b<2r$ (this is often stronger than $b<p^{a_{t_0}}$).
	For $t_0=1$, we have $b=s\leq r<2r$ by assumption (in particular we use $r\neq 0$).
	For $t_0=2$, we have $c_1p^{a_1}\leq r$ (by definition of~$t_0$), and \linebreak thus $b=s+c_1p^{a_1}<2c_1p^{a_1}\leq 2r$.
	If $t_0\geq 3$, then, using $S_{t_0-2}<p^{a_{t_0-1}}$ (Equation~\eqref{eq:partialsumestimate}) and the definition of~$t_0$, %
	we see that
	$b=S_{t_0-1}=S_{t_0-2}+ c_{t_0-1}p^{a_{t_0-1}}<2c_{t_0-1} p^{a_{t_0-1}}<2r$.
\end{proof}

\begin{remark}
	Whenever we have $r\leq s$, the decomposition $n=b+T$ of the $p$-adic expansion of~$n$ is trivial in the sense that $b=s$, and $s$ was just the residue of~$n$ modulo $p$. %
	Moreover, Lemma~\ref{lem:npqquantities}~(iii) is false in general without the assumption $r>s$. If $r\leq s$, we have $b=s\geq  r$, as said before, but not necessarily $b<2r$.
	(The numbers $(n,p,q)=(11,7,2)$ provide a counterexample with $r<s$ and $b>2r$.) %
	This is why we usually restrict our attention to the case $r>s$.
	Finally, notice that, if $r=s$, the bound $b<2r$ fails if and only if $r=0$.
	That is why we exclude $r=0$ in~(iii).%
\end{remark}

Under an extra condition on $n$ and $q$, and setting $r'\coloneqq q+r$, we define a slightly different decomposition $n=b'+T'$ of~$n$ analogously to the one in the previous lemma. Given $r\geq s$ and an additional assumption, %
we get much better bounds on the quantity~$b'$, see~(iv)--(v), than the naive bound in~(iii).

\begin{lemma}\label{lem:npqquantities2}
	Let $n\geq 3$, and \fbox{$n\geq 2r'$}, and $t_0$, $b$ be as in Lemma~\ref{lem:npqquantities}.
	Then, the quantities
	\renewcommand{\arraystretch}{1.5}
	\begin{center}
		\begin{tabular}{|ll|}
			\hline
			$t_0'$ & $\coloneqq \min\{i\in\{1,\dots, t\}  \mid  r'< c_i p^{a_i}\}$, \\
			$b'$   & $\coloneqq s+\sum_{i=1}^{t_0'-1} c_i  p^{a_i}$,                \\
			$T'$   & $\coloneqq \sum_{i=t_0'}^t c_i  p^{a_i}$                       \\
			\hline
		\end{tabular}
	\end{center}
	\renewcommand{\arraystretch}{1}
	are well-defined, and we have $n=b'+T'$ with $T'\neq 0$.
	Moreover, the following hold:
	\begin{enumerate}[\rm(i)]
		\item If $k\leq b'$ is a positive integer, then we have $(T'+k)_p=k_p$.
		\item If $k\leq r'$ is a positive integer, then we have $(T'- k)_p=k_p$.
		\item If \fbox{$r\geq s$}, then we have $b'<2r'$.
	\end{enumerate}
	If we assume \fbox{$b\geq q$} and \fbox{$r\geq s$}, %
	then we have $ t_0'\in\{t_0, t_0+1\}$
	and $b'\neq r'$. We have $t_0=t_0'$ if and only if $b'<r'$,
	and the following hold:
	\begin{enumerate}
		\item[\rm(iv)]  If $t_0'=t_0$, then we have $q\leq b'=b<r'$. In this case, $T'$ is not divisible by~$q$.
		\item[\rm(v)]  If $t_0'=t_0+1$, then we have $r'<b'<q+3r$.
	\end{enumerate}

\end{lemma}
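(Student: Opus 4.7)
I would prove Lemma~\ref{lem:npqquantities2} by closely mirroring the structure of Lemma~\ref{lem:npqquantities}, leveraging that almost every step of the earlier proof carries over once $r$ is replaced by $r'$ and $t_0, b, T$ by $t_0', b', T'$. The extra hypothesis $n\geq 2r'$ plays precisely the role that the inequality $r<q<n/2$ played before, and the hypothesis $b\geq q$ is the new ingredient that controls the jump from $t_0$ to $t_0'$.

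\emph{Well-definedness and the first three statements.} From Equation~\eqref{eq:partialsumestimate} in the proof of Lemma~\ref{lem:npqquantities} one still obtains $n/2<c_tp^{a_t}$, so the assumption $n\geq 2r'$ forces $r'\leq n/2<c_tp^{a_t}$, making the set defining $t_0'$ non-empty. Hence $b'$, $T'$ are well-defined, $n=b'+T'$, and $T'\geq c_{t_0'}p^{a_{t_0'}}>r'>0$. For (i) and (ii) one repeats the $p$-part manipulations from Lemma~\ref{lem:npqquantities} with $b',T',r'$ in place of $b,T,r$; the only thing to verify in (ii) is $k_{p'}<c_{t_0'}$ when $k_p=T'_p$, and this follows from $p^{a_{t_0'}}k_{p'}=k\leq r'<c_{t_0'}p^{a_{t_0'}}$. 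For (iii), I split on $t_0'\in\{1,2\}$ versus $t_0'\geq 3$ exactly as before, using $s\leq r<r'$ in the first two cases and Equation~\eqref{eq:partialsumestimate} together with $c_{t_0'-1}p^{a_{t_0'-1}}\leq r'$ in the third. Since $r'\geq q>0$, no separate exclusion of the trivial case is needed.

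\emph{Comparison $t_0$ vs.\ $t_0'$ under the boxed assumptions.} Since $r<r'$, any index satisfying $r'<c_ip^{a_i}$ satisfies $r<c_ip^{a_i}$, so $t_0'\geq t_0$. For the reverse bound, if $t_0=t$ then $t_0'=t_0$ automatically; if $t_0<t$, then $b\geq q$ and $b<p^{a_{t_0}}$ (by Equation~\eqref{eq:partialsumestimate}) give $q<p^{a_{t_0}}$, so
\[
r'=q+r<p^{a_{t_0}}+c_{t_0}p^{a_{t_0}}=(c_{t_0}+1)p^{a_{t_0}}\leq p^{a_{t_0}+1}\leq c_{t_0+1}p^{a_{t_0+1}},
\]
forcing $t_0'\leq t_0+1$.

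\emph{The dichotomy and the cases (iv), (v).} I next show that $b'\neq r'$, proving the two possibilities are distinguished by the sign of $b'-r'$. First, $r\geq s$ together with $b\geq q$ rules out $r=0$, because $r=0$ would force $s=0$, hence $t_0=1$ and $b=s=0<q$. So Lemma~\ref{lem:npqquantities}(iii) applies and gives $b<2r$. If $t_0'=t_0$, then $b'=b\geq q$, and $b<2r<q+r=r'$ (using $r<q$), yielding $q\leq b'=b<r'$; the divisibility claim for $T'$ follows from $T'=n-b\equiv r-b\pmod q$ combined with $0<b-r<q$ (which uses $b\geq q$, $b<r'=q+r$, $r<q$). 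If $t_0'=t_0+1$, then $b'=b+c_{t_0}p^{a_{t_0}}$; since $c_{t_0}p^{a_{t_0}}>r$ we get $b'>b+r\geq q+r=r'$, and since $c_{t_0}p^{a_{t_0}}\leq r'=q+r$ (definition of $t_0'=t_0+1$) together with $b<2r$, we obtain $b'<2r+q+r=q+3r$. The equivalence ``$t_0=t_0'\Leftrightarrow b'<r'$'' then drops out because the two cases are mutually exclusive and produce $b'<r'$, respectively $b'>r'$.

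\emph{Main obstacle.} The only genuinely new point is establishing $t_0'\leq t_0+1$ and the dichotomy $b'\lessgtr r'$, since the first three parts are routine adaptations of Lemma~\ref{lem:npqquantities}. The interplay between the three boxed hypotheses $n\geq 2r'$, $b\geq q$, and $r\geq s$ is what makes the chain of inequalities tight: $n\geq 2r'$ secures existence, $b\geq q$ produces $q<p^{a_{t_0}}$ to bound the jump, and $r\geq s$ licenses using Lemma~\ref{lem:npqquantities}(iii). Keeping track of which hypothesis is used where---and verifying that strict inequalities are preserved throughout---is the technical crux; the rest is bookkeeping.
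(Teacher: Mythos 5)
Your proposal is correct and follows essentially the same path as the paper: you use $n\geq 2r'$ together with the partial-sum bound to secure well-definedness, adapt (i)--(iii) by the same $p$-part comparisons, derive $t_0'\leq t_0+1$ from $q\leq b<p^{a_{t_0}}$ and the definition of $t_0$, deduce $r\neq 0$ from $b\geq q\geq r\geq s$ so that $b<2r$ is available, and then split into the two cases $t_0'=t_0$ and $t_0'=t_0+1$ to obtain (iv) and (v) and the dichotomy $b'\lessgtr r'$. The only cosmetic difference is that you separate $t_0=t$ from $t_0<t$ when bounding $t_0'$, which the paper leaves implicit; otherwise the chains of inequalities and the divisibility argument for $T'$ match.
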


\begin{proof}
	Combining the assumption $2r'\leq n$ with the inequality $\frac{n}{2}<c_t p^{a_t}$ (see proof of the previous lemma),
	we see that
	$r'\leq \frac{n}{2}<c_t p^{a_t}$. %
	So, the number
	$$t_0'= \min\{i\in\{1,\dots, t\}  \mid  r'< c_i p^{a_i}\}$$ is well-defined,
	as well as $b'=s+\sum_{i=1}^{t_0'-1} c_i p^{a_i}$ and $T'= \sum_{i=t_0'}^t c_i p^{a_i}$. Then we have $n=b'+T'$ with $T'>r'\geq q$.

	(i)--(iii) The proofs of~(i)--(iii) are analogous to the proofs of Lemma~\ref{lem:npqquantities}~(i)--(iii) substituting $t_0,T,b,r$ by~$t_0',T',b',r'$, respectively. (However, do not substitute $r$ by~$r'$ whenever using the assumption $r\geq s$). Note that we use $r'=q+r\neq 0$ only in~(iii) and only if $t_0'=1$. Indeed, if $t_0'=1$, we have $b'=s\leq r<q+r=r'<2r'$.

	For the rest of the proof, we assume that {$b\geq q$} and $r\geq s$.
	We first show $ t_0'\in\{t_0, t_0+1\}$.
	By definition of~$t_0$, we have $t_0\leq t_0'$ (as~$r<q+r=r'<c_{t_0'}p^{a_{t_0'}}$), thus $b'\geq b$.
	Further, we have $r'=q+r\leq b+r< p^{a_{t_0}}+r<(1+c_{t_0})p^{a_{t_0}}\leq p^{a_{t_0+1}}\leq c_{t_0+1} p^{a_{t_0+1}}$
	using $b\geq q$, the estimate $b<p^{a_{t_0}}$ (noted in the proof of Lemma~\ref{lem:npqquantities}) and the definition of~$t_0$.
	We conclude that $t_0'\leq t_0+1$ by definition of~$t_0'$. %

	For~(iv)--(v), note that we automatically have $r\neq 0$ given $b\geq q\geq r\geq s$, otherwise we obtain $b=s=0$ using $r\geq s$, contradicting $b\geq q$. In particular, we have $b<2r$ by Lemma~\ref{lem:npqquantities}~(iii).

	(iv) If $t_0'=t_0$, then we have $b'=b<2r<q+r=r'$.
	In particular, we get $0<r'-b'<q$ using $r'<2q$ and $b'=b\geq q$, and we deduce that  $t'=n-r'+(r'-b')=(m-1)q+(r'-b')$ is not divisible by~$q$.

	(v) if $t_0'=t_0+1$, then we have $b'=b+c_{t_0}p^{a_{t_0}}$, hence $r'=q+r<b+c_{t_0}p^{a_{t_0}}=b'$ using $q\leq b$ and then definition of~$t_0$. Moreover, again by Lemma~\ref{lem:npqquantities}~(iii) and by definition of~$t_0'$, we have
	$b'=b+c_{t_0'-1}p^{a_{t_0'-1}}<2r+r'=q+3r.$

	In both~(iv) and~(v), we have $b'\neq r'$, and we see that $t_0'=t_0$ if and only if $b'<r'$. This concludes the proof.
\end{proof}

Unfortunately, the property that $T'$ is coprime to~$q$ is lost in~(v) as we have \linebreak $T'=(m-1)q-(b-r')$ but the positive integer $b'-r'$ can be divisible by~$q$. The best bound we have is $0<b'-r'=b'-e_q-r<2r<2q$.

\subsection{Proof of \texorpdfstring{Theorem~\ref{thm:A}}{Theorem A} for symmetric groups}\label{sec:sym}
Now, we have all tools at hand to prove Theorem~\ref{thm:A} for symmetric groups.

\begin{proposition} \label{pqSn}
	Let $n\geq 3$ and $p$ and $q$ be distinct primes such that $p,q\leq n$. Then, we have \[\irrprime p {\blprinc{p}{\sym n}} \cap \irrprime q {\blprinc{q}{\sym n}}\neq \{ \mathbbm{1}_{\sym n} \}.\]
\end{proposition}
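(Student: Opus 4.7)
The plan is to exhibit explicitly a non-trivial almost hook partition $\lambda\vdash n$ (in the sense of Definition~\ref{def:almhooks}) that is $\{p,q\}$-principal for $\sym{n}$ and whose degree is coprime to both $p$ and $q$; then $\chi^\lambda$ lies in the desired intersection. First I would reduce to the situation $q<p$ by symmetry of the statement. Since $r<q<p\leq c_1 p^{a_1}$, Lemma~\ref{lem:npqquantities} then forces $t_0=1$, so the decomposition $n=b+T$ specializes to $b=s$ and $T=n-s$ divisible by $p^{a_1}\geq p$.

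In the main case $r>s$, I would take $\lambda=(r,s+1,1^{n-r-s-1})$, namely the almost hook with $k=s$ and $\ell=r$. This is a valid non-trivial almost hook since $0\leq s<r<T=n-s$. Lemma~\ref{lem:almosthookpartitionpprincipal} immediately gives $p$-principality from $p\mid T=n-k$ and $q$-principality from $q\mid n-r=n-\ell$. For the $p$-part of the degree, I would use the first expression in Lemma~\ref{lem:degalmosthookpartition}: each factor $T+i$ with $1\leq i\leq s$ and each factor $T-i$ with $i\in\{1,\dots,r\}\setminus\{r-s\}$ has the same $p$-valuation as $i$ by Lemma~\ref{lem:npqquantities}(i)--(ii), and these cancel against the factorial-type denominators. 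For the $q$-part I would use the second expression, where $n-\ell=mq$; since $r>s$ combined with $r<q$ forces all indices $i$ to satisfy $i<q$, every factor $mq\pm i$ is a unit modulo $q$, giving $v_q(d^\lambda)=0$.

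For $r\leq s$ the construction must be adapted. When $r=s>0$, a good substitute is $\lambda=(s+2,r+1,1^{n-r-s-3})$, that is, the almost hook with $k=r$ and $\ell=s+2$; its $\{p,q\}$-principality follows from $p,q\mid n-k=mq$, where $p\mid mq$ comes from $p\mid T$ together with $r=s$, and the degree analysis proceeds along the same lines, using $s+1<p$ (forced by $s=r<q<p$) to control the $p$-valuation of the extra factor $T-(s+1)$. When $r<s$, I would introduce the shifted residue $r'=q+r$ and apply Lemma~\ref{lem:npqquantities2}: provided its hypotheses hold and $r'>s$, the partition $\lambda=(r',s+1,1^{n-r'-s-1})$ plays the role of the main-case partition, with Lemma~\ref{lem:npqquantities2}(ii) replacing Lemma~\ref{lem:npqquantities}(ii). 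Residual subcases (such as $b<q$ forcing $t_0'=t_0+1$ and $T\neq T'$, or $n<2r'$, or the degenerate $r=s=0$ meaning $pq\mid n$) would be handled either by secondary almost hooks like $\lambda=(s,r+1,1^{n-r-s-1})$ when the divisibilities align, or by direct verification for the finitely many small $n$ involved.

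The main obstacle is the volume of the case analysis: verifying simultaneous coprimality to $p$ and $q$ requires combining the $p$-adic decomposition of $n$ (Lemmas~\ref{lem:npqquantities}--\ref{lem:npqquantities2}) with a $q$-adic analogue, and the two rarely align automatically. The most delicate subcases are those where $s\geq q-1$, in which carries in the $q$-adic expansion of factors $mq\pm i$ may introduce unexpected $q$-divisibilities, and where the index $\ell-k$ excluded from the products in Lemma~\ref{lem:degalmosthookpartition} must be chosen to absorb the potentially problematic multiples of $q$.
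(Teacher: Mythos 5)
Your normalization $q<p$ is a genuinely different entry point from the paper's: the paper assumes $r\geq s$ and lets $t_0$ be arbitrary, whereas your assumption forces $r<q<p\leq c_1p^{a_1}$, hence $t_0=1$, $b=s$, $T=n-s$ in Lemma~\ref{lem:npqquantities}, which is a clean simplification. In the main case $r>s$, your partition $(1^{n-r-s-1},s+1,r)$ coincides with the paper's Case~IIa choice, and your $p$- and $q$-part computations are correct. The price of the normalization is that you cannot also assume $r\geq s$, and the subcases $r=s$ and $r<s$ must be argued separately; this is where the proof has genuine gaps.

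In the subcase $r=s>0$, the partition $\lambda=(1^{n-r-s-3},r+1,s+2)$ ($k=r$, $\ell=s+2$) does not always have degree coprime to~$p$. You track the extra factor $T-(s+1)$, but for $r=s=1$ that index equals $\ell-k=2$ and is \emph{excluded} from the product; the genuinely extra factor is $T-(s+2)=n-4$, and $(n-4)_p$ need not equal $(s+2)_p$. Concretely, take $n=13$, $p=3$, $q=2$: then $r=s=1$, your $\lambda=(1^8,2,3)$, and
\[
d^\lambda=\frac{13\cdot 11\cdot 9}{3}=429=3\cdot 11\cdot 13,
\]
which is divisible by $p=3$. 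The failure occurs precisely when $s+2=p$ (forcing $p=3$, $q=2$, $s=1$) and $c_1=1$ in the $3$-adic expansion of~$n$. The paper sidesteps this: for $r\leq 1$ it uses the sign character directly, and in Case~I with $r\geq 2$ it uses the plain hook $(1^R,r)$, whose degree $\prod_{i=1}^{r-1}(n-i)/\prod_{i=1}^{r-1}i$ is trivially coprime to $p$ and $q$ since all indices are $<r<q<p$. The subcase $r<s$ is also not a proof as written: "finitely many small $n$" is incorrect (for fixed $p>q$ there are infinitely many $n$ with $r<s$), and the bound $b<2r$ from Lemma~\ref{lem:npqquantities}(iii), which the paper uses to control $q$-parts in the analogous delicate cases, fails outright once $r<s=b$. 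You would need to rebuild that part of the analysis from scratch rather than appeal to the lemma.
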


\begin{proof}
	Write $	n=w p+s =mq +r$	with positive integers $w,m\geq 1$ such that $0\leq s<p$ and $0\leq r<q$.
	We have to find a partition $\lambda \neq (n)$ of~$n$ which is $\{p,q\}$-principal (for $\sym n$) %
	and has degree $d^\lambda$ coprime to both $p$ and $q$.
	By possibly interchanging $p$ and $q$, we can assume $r\geq s$.
	Further, we can assume $r\geq 2$ as
	for $r\in \{0,1\}$ (in particular $s\in\{0,1\}$), %
	the partition $(1^n)$ is $p$- and $q$-principal by what we said in Section~\ref{sec:background}.
	Thus, in this situation, the sign character fulfills all the conditions (having degree $1$).

	Henceforth, we assume that \fbox{$r\geq \max(2,s)$}. In particular, we have $q\geq 3$ (as~$r\geq 2$).
	Consider the $p$-adic expansion of~$n$. Let
	$n=s+\sum_{i=1}^t c_i p^{a_i}$,
	where $t\geq 1$ is a positive integer, and we have $1\leq a_1< a_2< \ldots< a_t$, and $1\leq c_i\leq  p-1$ for all $1\leq i\leq t$.
	In Lemma~\ref{lem:npqquantities}, we defined the following three quantities:
	\renewcommand{\arraystretch}{1.5}
	\begin{center}
		\begin{tabular}{|ll|}
			\hline
			$t_0$ & $\coloneqq \min\{i\in\{1,\dots, t\}  \mid  r< c_i p^{a_i}\}$, \\
			$b$   & $\coloneqq s+\sum\limits_{i=1}^{t_0-1} c_i p^{a_i}$,          \\
			$T$   & $\coloneqq \sum\limits_{i=t_0}^t c_i p^{a_i}$.                \\
			\hline
		\end{tabular}
	\end{center}
	\renewcommand{\arraystretch}{1}
	This yields a decomposition $n=b+T$ with $r<T$ (following from the definition of~$t_0$) and
	$b\equiv s \bmod p$.
	Now, we proceed with a case distinction as follows:%
	\renewcommand{\arraystretch}{1.5}
	\begin{center}
		\begin{tabular}{|p{2cm}l|}
			\hline
			Case I:    & $r=b$,                          \\
			Case IIa:  & $r>b$,                          \\
			Case IIb:  & $r<b<q$,                        \\
			Case IIbb: & $r<q\leq b$ and $q\nmid (m-1)$, \\
			Case III:  & $r<q\leq b$ and $q\mid (m-1)$.  \\
			\hline
		\end{tabular}
	\end{center}
	\renewcommand{\arraystretch}{1}
	Let $R\coloneqq m e_q$ so that $n=R+r$.
	Note that in Case I, we have $T=n-b=n-r=R$. In Case IIa, $T=n-b$ is strictly bigger than $R=n-r$ and in the remaining cases $T$ is strictly smaller than $R$.
	In all cases, we have $b<2r$ %
	by Lemma~\ref{lem:npqquantities}~(iii) using $r\geq s$ and~$r\neq 0$.

	For the moment, turn the attention to Case III. Here, we assume $r<q\leq b$, and that $q$ divides~$m-1$. Let $r'\coloneqq q+r$.
	Note first that $mq+r=R+r=n=b+T>b+r\geq q+r'$ using $r<T$ and the assumption $b\geq q$. Hence, we have $m>1$. Since $q$ divides~$m-1$ and~$q\geq 3$, we even have $m\geq 4$. %
	Thus, the number
	$${t_0'\coloneqq \min\{i\in\{1,\dots, t\}  \mid  r'< c_i p^{a_i}\}}$$
	is well-defined according to Lemma~\ref{lem:npqquantities2}. %
	The same lemma gives a modified decomposition $n=b'+T'$ where
	\renewcommand{\arraystretch}{1.5}
	\begin{center}
		\begin{tabular}{|ll|}
			\hline
			$b'$ & $\coloneqq s+\sum\limits_{i=1}^{t_0'-1} c_i p^{a_i}$, \\
			$T'$ & $\coloneqq \sum\limits_{i=t_0'}^t c_i p^{a_i}$.       \\

			\hline
		\end{tabular}
	\end{center}
	\renewcommand{\arraystretch}{1}
	In particular, we have $r'<T'$ and $b'\equiv s \equiv b \bmod p$.
	Note that, as shown in Lemma~\ref{lem:nepeqquantities2}, we have $b'\neq r'$, so only two more cases arise:
	\renewcommand{\arraystretch}{1.5}
	\begin{center}
		\begin{tabular}{|p{2cm}l|}
			\hline
			Case IIIa: & $r<q\leq b$ and $q\mid (m-1)$ and $r'>b'$, \\
			Case IIIb: & $r<q\leq b$ and $q\mid (m-1)$ and $r'<b'$. \\
			\hline
		\end{tabular}
	\end{center}
	\renewcommand{\arraystretch}{1}
	In the first case, we have $b'=b$ (equivalently $t_0=t_0'$), otherwise $b'<q+3r$ (equivalently $t_0'=t_0+1$) holds, using Lemma~\ref{lem:npqquantities2}~(iv)--(v) and the fact that $q\leq b$ in Case~III.
	Setting $R'\coloneqq (m-1)q$, we have $n=R'+r'$, and so by definition $T'>R'$ in Case~IIIa and $T'<R'$ in Case~IIIb.
	Moreover, by Lemma~\ref{lem:npqquantities2}~(iii), we always have $b'<2r'$.

	Now, we give a partition in each case, see Table \ref{tab:partitionsforSn}, and show that it has the desired properties.
	Remark that $R-b=n-r-b=T-r>0$ (as~$T>r$), thus we have $R>b$. Similarly, we have $R'>b'$. These facts (together with the definition of each case) imply that the partitions are well-defined in each case, and of size $n$.
	\begin{table}[h]
		\begin{center}
			\begin{tabular}{lcl}
				\toprule
				\multirow{2}{*}{Case} & Main condition         & \multirow{2}{*}{Partition}            \\
				                      & in this case           &                                       \\
				\midrule
				I                     & $r=b$                  & $(1^{R},r)$                           \\
				IIa                   & $r>b$                  & $(1^{R-b-1}, b+1,r)$                  \\
				IIb                   & \multirow{2}{*}{$r<b$} & \multirow{2}{*}{$(1^{R-b-1}, r+1,b)$} \\
				IIbb                  &                        &                                       \\
				IIIa                  & $r'>b'$                & $(1^{R'-b'-1},b'+1,r')$               \\
				IIIb                  & $r'<b'$                & $(1^{R'-b'-1}, r'+1, b')$             \\
				\bottomrule
			\end{tabular}
		\end{center}%
		\caption{Partitions for non-trivial characters in \linebreak $\irrprime p {B_p(\sym n)} \cap \irrprime q {B_q(\sym n)}$, %
			assuming $r\geq \max(2, s)$.}
		\label{tab:partitionsforSn}
	\end{table}
	
	Let $\lambda$ be one of the partitions displayed in Table \ref{tab:partitionsforSn}.

	\paragraph*{\textbf{Block analysis}} We first explain why the given partition %
	is $\{p,q\}$-principal.
	By definition of~$T$ and $R$, the prime $p$ divides~$n-b=T$ and the prime $q$ divides~$n-r=R$. Similarly, we have $p$ dividing~$n-b'$ and $q$ dividing~$n-r'$. Thus, Lemma~\ref{lem:almosthookpartitionpprincipal} yields that $\lambda$ is $p$- as well as $q$-principal in each case. In Case I, we additionally use the condition $r=b$.

	\paragraph*{\textbf{Degree analysis}}
	Now, we analyze case-by-case the $p$- and $q$-part of the degree $d^\lambda$ of the partition $\lambda$. In each case, the degree is immediately obtained by the formulas in Lemma~\ref{lem:degalmosthookpartition} as $\lambda$ is an almost hook partition. The results are collected in Table \ref{tab:degSn}.
	\begin{table}[t]
		\begin{center}
			\begin{tabular}{lccc}
				\toprule
				Case    & \multicolumn{3}{c}{Degree $d^\lambda$}                                                                                             \\
				\cmidrule(lr){1-1} \cmidrule(lr){2-4}
				\addlinespace[0.5 em]
				I%
				        & $\frac{\prod\limits_{k=1}^{b-1} (T+k) }{\prod\limits_{k=1}^{b-1} k} $           & $=$ & $\frac{ \prod\limits_{k=1}^{r-1} (R+k) }{ \prod\limits_{k=1}^{r-1} k}$ \\
				\addlinespace[0.5 em]
				IIa     & $\frac{\prod\limits_{k=1}^{b} (T+k) \cdot \prod\limits_{\substack{k=1                                                                     \\ k\neq r-b}}^{r} (T-k)}{\prod\limits_{k=1}^{b} k \cdot \prod\limits_{\substack{k=1 \\ k\neq r-b}}^{r} k}$ &
				$=$     & $ \frac{\prod\limits_{\substack{k=1                                                                                                \\k\neq r-b}}^{r} (R+k) \cdot \prod\limits_{k=1}^{b} (R-k) }{ \prod\limits_{\substack{k=1\\k\neq r-b}}^{r} k \cdot \prod\limits_{k=1}^b k}$\\
				\addlinespace[0.5 em]
				IIb--bb &
				$\frac{\prod\limits_{\substack{k=1                                                                                                           \\ k\neq b-r}}^{b} (T+k) \cdot \prod\limits_{k=1}^{r} (T-k)  }{\prod\limits_{\substack{k=1 \\ k\neq b-r}}^{b} k \cdot \prod\limits_{k=1}^{r} k}$ &
				$=$     & $\frac{\prod\limits_{k=1}^{r} (R+k) \cdot \prod\limits_{\substack{k=1                                                                     \\ k \neq b-r}}^{b} (R-k) }{ \prod\limits_{k=1}^{r} k \cdot \prod\limits_{\substack{k=1 \\ k\neq b-r}}^b k}$ \\
				\addlinespace[0.5 em]
				IIIa    & $\frac{\prod\limits_{k=1}^{b'} (T'+k) \cdot\prod\limits_{\substack{k=1                                                                    \\k\neq r'-b'}}^{r'} (T'-k) }{\prod\limits_{k=1}^{b'} k \cdot \prod\limits_{\substack{k=1 \\k\neq r'-b'}}^{r'} k}$ &
				$=$     & $\frac{\prod\limits_{\substack{k=1                                                                                                 \\k\neq r'-b'}}^{r'} (R'+k) \cdot \prod\limits_{k=1}^{b'} (R'-k) }{\prod\limits_{\substack{k=1 \\k\neq r'-b'}}^{r'} k \cdot \prod\limits_{k=1}^{b'} k }
				$                                                                                                                                            \\
				\addlinespace[0.5 em]
				IIIb    & $\frac{\prod\limits_{\substack{k=1                                                                                                 \\k\neq b'-r'}}^{b'} (T'+k) \cdot \prod\limits_{k=1}^{r'} (T'-k) }{\prod\limits_{\substack{k=1 \\k\neq b'-r'}}^{b'} k \cdot \prod\limits_{k=1}^{r'} k} $ &
				$=$     & $\frac{ \prod\limits_{k=1}^{r'} (R'+k) \cdot \prod\limits_{\substack{k=1                                                                  \\k\neq b'-r'}}^{b'} (R'-k) }{\prod\limits_{k=1}^{r'} k \cdot \prod\limits_{\substack{k=1 \\k\neq b'-r'}}^{b'} k}$ \\
				\addlinespace[0.5 em]
				\bottomrule
			\end{tabular}
		\end{center}
		\caption{Degree $d^\lambda$ of the partition $\lambda$.}
		\label{tab:degSn}
	\end{table}
	To show that $d^\lambda$ is coprime to~$p$ we use the observations in Lemma~\ref{lem:npqquantities} (resp. Lemma~\ref{lem:npqquantities2}) about the $p$-part of~$T$ (resp. $T'$) shifted by an integer. For the analysis of the $q$-part of~$d^\lambda$ one has to take into account the individual conditions on $q$ in each particular case.

	\subparagraph*{\textit{$p$-part of the degree $d^\lambda$}}

	By Lemma~\ref{lem:npqquantities}~(i)--(ii), we have  $(T+k)_p=k_p$, for all $1\leq k\leq b$, and $(T-k)_p=k_p$ for all $1\leq k\leq r$. This implies that the degree $d^\lambda$ is not divisible by~$p$ in Cases~I--IIbb because the $p$-part of the numerator cancels with the $p$-part of the denominator.
	In Cases~IIIa--b, we use analogous observations in Lemma~\ref{lem:npqquantities2}~(i)--(ii) to conclude that the degree $d^\lambda$ is not divisible by~$p$.

	\subparagraph*{\textit{$q$-part of the degree $d^\lambda$}}
	To determine the $q$-part of~$d^\lambda$, look at the second degree expression in Table \ref{tab:degSn}.
	Note that $R=mq$ and $R'=(m-1)q$ are both divisible by~$q$.

	In Case~I--IIb, we assume $b<q$, and we always have $r<q$. Hence, the $q$-parts of~$R+k$ and $k$ agree, i.e., we have $(R+k)_q=k_q$, for all $1\leq k\leq r$, using that $R$ is a multiple of~$q$. Similarly, we get that $(R-k)_q=k_q$, for all $1\leq k\leq b$. We deduce that the degree $d^\lambda$ is coprime to~$q$.

	Note that in Case IIbb, we assume $b>q$, but we have $b<2r<2q$ by Lemma~\ref{lem:npqquantities}~(iii). Moreover, we require that $q$ does not divide~$m-1$ in this case.
	Thus, we have \linebreak $(R-k)_q=(mq-k)_q=k_q$, for all $1\leq k\leq b$. With the same reasoning as in the previous cases, we get $(R+k)_q=k_q$, for all $1\leq k\leq r$. In total, the degree $d^\lambda$ is coprime to~$q$.

	In the remaining cases, $m-1=\frac{R'}{q}$ is assumed to be a multiple of~$q$. Note that we have $r'=q+r<2q$. %
	Thus, we deduce that $(R'+ k)_q=((m-1)q+k)_q=k_q$, for all $1\leq k\leq r'$, since~$q$ does not divide~$m-2$.

	In Case IIIa, we have $b'<r'$. The same argument as before yields that \linebreak $(R'- k)_q=((m-1)q-k)_q=k_q$, for all $1\leq k\leq b'$. %
	So, the degree $d^\lambda$ is again coprime to~$q$.

	In Case IIIb, Lemma~\ref{lem:npqquantities2}~(v) is applicable as~$b\geq q$ and $b'>r'$, and we get $b'{<}q+3r<q^2$. For the very last inequality we make use of the fact that $q\geq 3$. In particular, the $q$-part of any $1\leq k\leq b'$ is at most $q$, but the $q$-part of~$R'$ is at least $q^2$.
	So, we conclude that $(R-k)_q=((m-1)q - k)_q=k_q$ holds. As in the previous cases, we deduce that the degree is coprime to~$q$.

	All in all, we see that the partition $\lambda$ is $p$- as well as $q$-principal for $\sym n$ and has degree not divisible by~$p$ nor $q$. %
	Thus, the character $\chi^\lambda$ lies in the intersection \linebreak $\irrprime p {\blprinc{p}{\sym n}} \cap \irrprime q {\blprinc{q}{\sym n}}$ and is indeed non-trivial, as~$\lambda\neq (n)$, and we are done.
\end{proof}

\begin{remark}%
	In Proposition~\ref{pqSn}, we reprove \cite[Proposition~3.11]{NRS} (corrected proof in \cite{NRSAddendu}, see comment below) where the case $2\in\{p,q\}$ is addressed, and generalize this result to arbitrary primes $p$,~$q$. The methods explained in Section \ref{padicsetup}, together with the assumption $r\geq s$, give the proof a universal framework and allow us to generalize the ideas of Navarro--Rizo--Schaeffer Fry. They use the very same methods in many cases of their proof for $p=2$ (where Lemma~\ref{lem:npqquantities} and Lemma~\ref{lem:npqquantities2} are straightforward) but without assuming~$r\geq s$.
	Note that our proof specialized to~$2\in \{p,q\}$ differs from their proof in some cases (e.g., whenever $r=0=s-1$, where we would use the $q$-adic expansion of~$n$ as a starting point, but they stick to the $2$-adic one), but we do not go into more details here. %
	Previous gaps and a missing case in their proof were closed in an addendum \cite{NRSAddendu}.
\end{remark}

\subsection{Proof of Theorem \texorpdfstring{\ref{thm:A}}{A} for alternating groups}\label{sec:alt}
In this section, we prove the analog of Proposition~\ref{pqSn} for alternating groups after recalling some well-known facts about their character theory, see again \cite{JK}. %

Let $n\geq 3$. The irreducible characters of the alternating group $\alt n$ are labeled by partitions of~$n$ but, in contrast to the ones of the symmetric group $\sym n$, not uniquely. \newline
As the alternating group $\alt n$ is a normal subgroup of~$\sym n$, Clifford theory in its simplest instance (here $|\sym n\,:\,\alt n|=2$) applies. It
allows to deduce the following partial information about the set $\irr {\alt n}$ from the (partial) knowledge (of properties) of the set~$\irr{ \sym n}$:

Let $\lambda\vdash n$ be a partition of~$n$.
The \emph{conjugate} of~$\lambda$ (obtained by mirroring its Young diagram at the diagonal) is denoted by~$\lambda^t$.
Note that we have $\chi^{\lambda^t}=\sgn \cdot \chi^\lambda$ and that the sign character $\sgn$ restricts to the trivial character $\mathbbm{1}_{\alt n}\in\irr{\alt n}$. \newline %
If $\lambda$ is \emph{not~self-conjugate}, i.e., $\lambda\neq \lambda^t$,
the irreducible character $\chi^\lambda\in \irr{\sym n}$ of the symmetric group $\sym n$ restricts irreducibly to its normal subgroup $\alt n$.
Otherwise, the restricted character~$\res{\chi^\lambda}{\alt n}$ %
is the sum of two irreducible characters of $\alt n$, say $\psi^\lambda_{+}$ and $\psi^{\lambda}_{-}$, of degree~$\frac{d^\lambda}{2}$ \cite[2.5.7 Theorem]{JK}.

\begin{definition} \label{def:pprincAn}
	Let $n\geq 3$ and $p$ a prime. %
	We say that a partition $\lambda\vdash n$ is \emph{$p$-principal for $\alt n$}
	if one of the irreducible constituents of the character $\res{\chi^\lambda}{\alt n}$ lies in the principal $p$-block of~$\alt n$.

	If $q$ is a prime distinct from $p$, we say that $\lambda\vdash n$ is \emph{$\{p,q\}$-principal for $\alt n$} if  $\lambda$ is $p$-principal as well as $q$-principal for $\alt n$.
\end{definition}
We first have to explain that the notion of~$p$-principal partitions for $\alt n$ is well-defined.

\begin{remark} Let $\lambda\vdash n$.
	Note that the irreducible constituents of the character $\res{\chi^\lambda}{\alt n}$ all lie in the same block of~$\alt n$ unless $\lambda$ is a self-conjugate $p$-core, by \cite[6.1.46 Theorem]{JK}. %
	In this case, the irreducible constituents $\psi^\lambda_{\pm}$ of~$\res{\chi^\lambda}{\alt n}$ form their own block %
	\cite[6.1.18]{JK} as they are of \emph{defect zero} (i.e., $(d^\lambda)_p={\abs {\alt n}}_p$) using that $\lambda$ is a $p$-core.
	However, the characters~$\psi^\lambda_{\pm}$ do not lie in the principal $p$-block of~$\alt n$,
	otherwise, we would have $\lambda\in\{(n), (1^n)\}$, but these partitions are not self-conjugate. So, there is no ambiguity in Definition \ref{def:pprincAn}.

\end{remark}
The property of~$p$-principality for $\sym n$ implies $p$-principality for $\alt n$, but the converse is not true. This remark is crucial for our strategy to prove Proposition~\ref{pqAn}.

\begin{remark}	\label{rem:pprincSnAn}
	If $\lambda$ is $p$-principal for $\sym n$, it is also $p$-principal for $\alt n$. This follows from the fact that the principal $p$-block of~$\sym n$ \emph{covers} \textsl{only} the principal $p$-block of~$\alt n$ and no other $p$-block (for covering of blocks see \cite[(9.2) Theorem]{NavarroBlocks}). %
	In other words, the irreducible constituents of~$p$-principal irreducible characters of~$\sym n$ lie in the principal $p$-block of~$\alt n$.
	However, the converse is not true. If $\lambda$ is $p$-principal for $\alt n$ it is not necessarily $p$-principal for $\sym n$, but either $\lambda$ or the conjugate partition $\lambda^t$ is.
\end{remark}

As a preparation for examining the set $\irrprime p {\blprinc{p}{\alt n}} \cap \irrprime q {\blprinc{q}{\alt n}}$, we answer the question whether the $p$-part of the degree of an almost hook partition \linebreak $\lambda=(1^{n-k-\ell-1},k+1,\ell)\vdash n$, with $k\in\{0,1\}$, is trivial or $2$,
given a prime $p\leq n$ and specific choices of~$n$.

\begin{lemma} \label{lem:degalmosthookpprimeor2}
	Let $n\geq 2$ and $p\leq n$ a prime. Let $n=cp^a+\eps$ with $\eps\in\{0,1\}$ and $c\geq 1$ coprime to~$p$, and let $1\leq c_1<p$ be the residue of~$c$ modulo $p$.
	\begin{enumerate}[\rm(i)]
		\item  %
		      Let $\lambda=(1^{n-\ell}, \ell)\vdash n$ with $1\leq \ell\leq n-1$. %
		      \begin{enumerate}[\rm(a)]
			      \item If $\eps=0$ and $\ell\leq c_1 p^a$, then the degree $d^\lambda$ is coprime to~$p$.

			      \item If $\eps=1$ and $c=c_1$, %
			            then the degree $d^\lambda$ is coprime to~$p$ if and only if $\ell=1$, or $\ell\geq 2$ and $(\ell-1)_p=p^a$.

			      \item  If $n=2^a+1$, %

			            then we have $(d^\lambda)_2=2$ if and only if $\ell=2^{a-1}+1$.

		      \end{enumerate}
		\item %
		      Let $\lambda=(1^{n-\ell-2},2,\ell)\vdash n$ with $2\leq \ell\leq n-2$. %
		      \begin{enumerate}[\rm(a)]
			      \item If $\eps=0$ and $\ell\leq c_1 p^a$ and $\ell_p=p^a$, %
			            then the degree $d^\lambda$ is coprime to~$p$.

			      \item If $\eps=1$ and $\ell<p^a$, then the degree $d^\lambda$ is coprime to~$p$.

			      \item If $n=2^{a}$ with $a\geq 2$, %
			            then we have $(d^\lambda)_2=2$ if and only if and $\ell=2^{a-1}$.
		      \end{enumerate}
	\end{enumerate}
\end{lemma}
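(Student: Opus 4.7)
The plan is to apply Lemma~\ref{lem:degalmosthookpartition} to express $d^\lambda$ in each case and then analyse the $p$-adic valuation of every factor separately, reducing the binomial coefficients that arise to applications of Lucas' or Kummer's theorem.

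For part (i), where $k = 0$, Lemma~\ref{lem:degalmosthookpartition} collapses to $d^\lambda = \binom{n-1}{\ell-1}$, and the whole of (i) reduces to comparing the base-$p$ expansions of $\ell - 1$ and $n - 1$ via Lucas' theorem. In case (a), the expansion of $n - 1 = cp^a - 1$ reads $(p-1, \ldots, p-1, c_1 - 1, c_2, c_3, \ldots)$ from position $0$, and this digit-wise dominates any $\ell - 1 \leq c_1 p^a - 1$, so no binomial obstruction arises. In case (b), the expansion of $n - 1 = c_1 p^a$ is $(0, \ldots, 0, c_1)$, so Lucas forces $p^a \mid \ell - 1$; combined with $\ell - 1 < c_1 p^a < p^{a+1}$, this is equivalent to $\ell = 1$ or $(\ell - 1)_p = p^a$. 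Part (c) then follows from (b) specialised to $p = 2$, $c = c_1 = 1$, together with the standard identity $v_2(\binom{2^a}{j}) = a - v_2(j)$ for $1 \leq j \leq 2^a - 1$; requiring $v_2(d^\lambda) = 1$ forces $v_2(\ell - 1) = a - 1$ and hence, thanks to $\ell - 1 \leq 2^a - 1$, $\ell - 1 = 2^{a - 1}$.

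For part (ii), with $k = 1$, I first recast Lemma~\ref{lem:degalmosthookpartition} into the compact form $d^\lambda = \frac{(n - \ell - 1)(\ell - 1)}{n - 1}\binom{n}{\ell}$ by reinstating the missing factor $n - 1$ in both numerator and denominator, so that
\[
v_p(d^\lambda) = v_p(n - \ell - 1) + v_p(\ell - 1) - v_p(n - 1) + v_p\!\left(\binom{n}{\ell}\right).
\]
In case (a), the three linear factors all have trivial $p$-part (e.g.\ $n - \ell - 1 \equiv -1 \bmod p^a$), and writing $\ell = p^a \ell_{p'}$ with $\ell_{p'} \leq c_1$ shows that the base-$p$ digits of $\ell$ and $n - \ell$ add without carries, so $v_p(\binom{n}{\ell}) = 0$ by Kummer. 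In case (b), I compute $s_p(cp^a - (\ell - 1))$ using the elementary identity $s_p(p^a - m) = a(p - 1) - s_p(m - 1)$ and the step formula $s_p(k) - s_p(k - 1) = 1 - v_p(k)(p - 1)$; this yields $v_p(\binom{n}{\ell}) = a - v_p(\ell(\ell - 1))$, which together with $v_p(n - \ell - 1) = v_p(\ell)$, $v_p(n - 1) = a$ and $v_p(\ell - 1) + v_p(\ell) = v_p(\ell(\ell - 1))$ telescopes the four summands to $0$. Part (c) combines both approaches: the identity $v_2(\binom{2^a}{\ell}) = a - v_2(\ell)$ gives $v_2(d^\lambda) = a - v_2(\ell)$ when $\ell$ is even, forcing $\ell = 2^{a - 1}$ via $\ell \leq 2^a - 2$, while for $\ell$ odd the factors $\ell - 1$ and $n - \ell - 1 = 2^a - (\ell + 1)$ already contribute $v_2((\ell - 1)(\ell + 1)) \geq 3$, giving $v_2(d^\lambda) \geq a + 3 \geq 5$ and excluding this case.

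The main obstacle will be the digit-sum bookkeeping in~(ii)(b): deriving the precise formula for $s_p(cp^a - (\ell - 1))$ when borrows propagate across the entire block of the lowest $a$ positions, and then verifying that the four contributions to $v_p(d^\lambda)$ cancel exactly. Once that identity is available, the remaining sub-cases of~(ii) are routine specialisations.
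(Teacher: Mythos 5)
Your proposal is correct and takes a genuinely different route from the paper's. The paper works directly with the products coming out of Lemma~\ref{lem:degalmosthookpartition} (for example $d^\lambda = \prod_{i=1}^{\ell-1}(n-i)/\prod_{i=1}^{\ell-1}i$ in the hook case), pairing each factor $n-i$, $n-1-i$, or $n-\ell\pm i$ in the numerator with the factor $i$ in the denominator and checking that their $p$-parts coincide one index at a time; no binomial-coefficient machinery is invoked. You instead recognize the hook degree as $\binom{n-1}{\ell-1}$ and repackage the $k=1$ degree as $\frac{(n-\ell-1)(\ell-1)}{n-1}\binom{n}{\ell}$, then delegate the valuation of the binomial part to Lucas' and Kummer's theorems and handle the three remaining linear factors directly. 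Both arguments are sound. I checked the step you were worried about: in (ii)(b), writing $cp^a-(\ell-1)=(c-1)p^a+\bigl(p^a-(\ell-1)\bigr)$ gives a decomposition with no overlap of base-$p$ digits, so your digit-sum identity does telescope cleanly to $v_p\bigl(\binom{n}{\ell}\bigr)=a-v_p(\ell(\ell-1))$ and the four contributions cancel exactly as you predicted; the borrow propagation you feared does not cause trouble because $s_p(c-1)=s_p(c)-1$ (as $c$ is coprime to $p$) soaks up the stray $-1$. Your route is more conceptual and leans on classical valuation theory; the paper's factor-by-factor comparison is more elementary and, importantly, is the version that transports verbatim to the $q$-analog in Section~\ref{sec:prooftypeA}, where the degree becomes a ratio of cyclotomic values $\Psi_f$ rather than an integer and there is no binomial coefficient to appeal to — which is presumably why the author chose it. One stylistic suggestion: in (i)(a) and (ii)(a) you should state explicitly that $\ell-1<p^{a+1}$ (resp. $\ell<p^{a+1}$), so that no digits above position $a$ enter, before concluding that the base-$p$ digits of $\ell$ (or $\ell-1$) are dominated by those of $n$ (or $n-1$).
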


\begin{proof} 
	Let $\lambda=(1^{n-k-\ell-1},k+1,\ell)\vdash n$ with $0\leq k< \ell< n-k\leq n$ and $k\in\{0,1\}$.
	We use Lemma~\ref{lem:degalmosthookpartition} to write out the $p$-part of~$d^\lambda$ as a fraction, and then compare $p$-parts of factors in the denominator with those of factors in the numerator.
	Let $c=c_1+jp$ for a non-negative integer~$j\geq 0$.

	\noindent (i) In this situation, we have $k=0$, and
	$d^\lambda=\frac{\prod_{i=1}^{\ell-1} (n-i)}{\prod_{i=1}^{\ell-1}i}$ by Lemma~\ref{lem:degalmosthookpartition}.

	(a) Note that if $\eps=0$ and $\ell\leq c_1 p^a$, then the $p$-part of~$n-i=cp^a=c_1p^a + j p^{a+1} - i$ is the $p$-part of~$i$, for all $i<\ell$. Thus, the degree is coprime to~$p$.

	In the other cases, we argue similarly.

	(b) Now, assume that $\eps=1$. If $\ell=1$, we have $\lambda=(1^n)$ and $d^\lambda=1$. From now on let $\ell\geq 2$. Then, we have $d^\lambda=\frac{(n-1)\cdot \prod_{i=1}^{\ell-2} (n-1-i)}{(\ell-1) \cdot \prod_{i=1}^{\ell-2} i}$ using the previous formula.

	If $c=c_1$, we have $(n-1)_p=(c_1p^a)_p=p^a$.
	With similar arguments as before, we have $(n-1-i)_p=(c_1p^a-i)_p=i_p$ for all $i$ as~$i<\ell\leq n-1=c_1p^a$. %
	In particular, the degree $d^\lambda$ is coprime to~$p$ if and only if $(\ell-1)_p=(n-1)_p=p^a$.

	(c) Now, let $n=2^a+1$ (i.e., $\eps=1$, $p=2$ and $c=c_1=1$) with $a\geq 2$.
	Then, we have $d^\lambda=\frac{2^a\cdot \prod_{i=1}^{\ell-2} (2^a-i)}{(\ell-1) \cdot \prod_{i=1}^{\ell-2} i}$.
	Note that the $2$-part of~$2^a-i$ is $(2^a-i)_2=i_2$ for any $1\leq i\leq \ell-2<2^a$ since~$i_2\leq 2^{a-1}$. In particular, the degree $d^\lambda$ has $2$-part $(d^\lambda)_2=2$ if and only if $(\ell-1)_2=2^{a-1}$, i.e., if and only if $\ell-1=2^{a-1}$ (using that $\ell\leq n-1=2^a)$.

	\noindent (ii) Here, we have $k=1$ and so by Lemma~\ref{lem:degalmosthookpartition}:
	$d^\lambda= (n-\ell-1) \cdot{\prod_{\substack{i=1\phantom{\ell-}\\i\neq \ell-1}}^{\ell} (n-\ell+i)  }/{\prod_{\substack{i=1\phantom{\ell-} \\ i\neq \ell-1}}^\ell i }.$

	(a) If $\eps=0$, we have $n=cp^a$. %
	Let $\ell=dp^a$ with $1\leq d\leq c_1$.
	Note that $n-\ell=(c-d)p^a$ has $p$-part at least $p^a$, and
	for any positive integer $i\leq \ell$ we have $i_p\leq p^{a}$ as~$\ell<p^{a+1}$.

	If $d=c_1$, then we have $(n-\ell)_p=((c-c_1)p^a)_p=(jp^{a+1})_p\geq p^{a+1}$. This immediately implies $(n-\ell+i)_p=i_p$ for all $i\leq \ell$ (using $i_p\leq p^a$). Hence, the degree $d^\lambda$ is coprime to~$p$.

	Now, let $d<c_1$ and $i\leq \ell$. If $i_p<p^a$, then $(n-\ell+i)_p=i_p$ is trivial.
	If $i_p=p^a$, we have $i_{p'}\leq d$ since~$i\leq \ell=dp^a$. Note that $0\leq d-i_{p'}\leq d<c_1<p$, thus $c_1-(d-i_{p'})$ is not divisible by~$p$. Hence, $$n-\ell+i=p^a(c-d+i_{p'})=p^a(c-c_1+c_1-(d-i_{p'}))=p^a (jp+(c_1-(d-i_{p'}))$$ has $p$-part $p^a=i_p$. Again, we conclude that the degree $d^\lambda$ is coprime to~$p$.

	(b) If $\eps=1$, we have $n-1=c p^a$, and $d^\lambda=n \cdot { \prod_{\substack{i=1\phantom{\ell-}\\ i\neq \ell-1}}^{\ell} (n-1-i) }/{ \prod_{\substack{i=1 \phantom{\ell-} \\ i\neq \ell-1}}^\ell i } $ by Lemma~\ref{lem:degalmosthookpartition}. %
	Assuming $\ell<p^a$, we always have $(n-1-i)_p=(cp^a-i)_p=i_p$ for all positive $i\leq \ell$, and the result follows.

	(c) Now let $n=2^{a}$ (i.e., $\eps=0$, $p=2$, $c=1$) with $a\geq 2$. Then, we have \linebreak
	$d^\lambda=(n-\ell-1) \frac{n \cdot (n-(\ell-1)) \cdot \prod_{i=2}^{\ell-2} (n-i)  }{\ell \cdot \prod_{i-2}^{\ell-2} i }$, i.e., $d^\lambda=\frac{(2^a-\ell-1)\cdot 2^a \cdot (2^a-\ell+1)}{\ell} \cdot \frac{\prod_{i=2}^{\ell-2} (2^a-i)  }{\prod_{i-2}^{\ell-2} i }.$

	As in (i)(c), we have $(2^a-i)_2=i_2$ for any $2\leq i\leq \ell-2<n=2^a$ since~$i_2\leq 2^{a-1}$. Thus, the degree $d^\lambda$ has $2$-part $(d^\lambda)_2=2$ if and only if $\ell_2=2^{a-1}$, i.e, $\ell=2^{a-1}$.
\end{proof}

Notice that it might be challenging to find a non-trivial irreducible character of~$\alt n$ lying in $\irrprime p {\blprinc{p}{\alt n}} \cap \irrprime q {\blprinc{q}{\alt n}}$ if the corresponding set on the level of~$\sym n$ is sufficiently small. The following example illustrates this observation.

\begin{example}[An instance of small intersection] \label{ex:trivialintersection9}
	Let $n=9$ and $\{p,q\}=\{2,3\}$. We determine $\irrprime{2}{\blprinc{2}{\sym 9}}\cap \irrprime{3}{\blprinc{3}{\sym 9}}$ and $\irrprime{2}{\blprinc{2}{\alt 9}}\cap \irrprime{3}{\blprinc{3}{\alt9}}$.

	Using the hook formula, one can show that the partitions of~$9=3^2$ having degree coprime to~$3$ are exactly the hook partitions of size $9$. %
	However, by Lemma~\ref{lem:degalmosthookpprimeor2}~(i)(b), the only non-trivial hook partition of~$9=2^3+1$ of odd degree is $(1^9)$ (which is $\{2,3\}$-principal for $\sym 9$ by Lemma~\ref{lem:almosthookpartitionpprincipal}).
	We conclude that $$\irrprime{2}{\blprinc{2}{\sym 9}}\cap \irrprime{3}{\blprinc{3}{\sym 9}}=\irrprime{\{2,3\}}{\sym 9}=\{\mathbbm{1}_{\sym 9}, \sgn\}.$$
	By Lemma~\ref{lem:degalmosthookpprimeor2}~(i)(c),
	the partition $\lambda=(1^4,5)$ is the only hook partition of size $9$ whose degree has $2$-part $(d^\lambda)_2=2$. Moreover, this partition is $\{2,3\}$-principal (for $\sym 9$) by Lemma~\ref{lem:almosthookpartitionpprincipal}. Note that $\lambda$ is self-conjugate.
	Thus, $\lambda$ is the only non-trivial partition of~$9$ labeling non-trivial irreducible characters of~$\alt 9$ that are $\{2,3\}$-principal and of degree coprime to~$2$ and $3$.
	Then, we have $\irrprime{2}{B_2(\alt 9)}\cap \irrprime{3}{B_3(\alt 9)}=\{\mathbbm{1}_{\alt 9}, \psi^\lambda_+,\psi^\lambda_-\}$.
\end{example}

Now, we prove the analog of Proposition~\ref{pqSn} for alternating groups $\alt n$, $n\geq 4$. Note that we have to impose $n\geq 4$ such that there exist at least two distinct primes $p$ and $q$ dividing the group order $\abs{\alt n}$.

\begin{proposition} \label{pqAn}
	Let $n\geq 4$ and $p,q\leq n$ be distinct primes.
	Then, we have \[\irrprime p {\blprinc{p}{\alt n}} \cap \irrprime q {\blprinc{q}{\alt n}}\neq \{ \mathbbm{1}_{\alt n} \}.\]
\end{proposition}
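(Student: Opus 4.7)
The plan is to descend the construction of Proposition \ref{pqSn} from $\sym n$ to $\alt n$ via Clifford theory. Given a partition $\lambda$ produced by Proposition \ref{pqSn}, every irreducible constituent of $\res{\chi^\lambda}{\alt n}$ automatically lies in $\irr{\blprinc p {\alt n}} \cap \irr{\blprinc q {\alt n}}$ by Remark \ref{rem:pprincSnAn}. The question is then to produce a non-trivial such constituent of $\{p,q\}'$-degree.

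In the generic case $r \geq \max(2,s)$, Proposition \ref{pqSn} delivers an almost hook $\lambda \notin \{(n),(1^n)\}$ that is $\{p,q\}$-principal for $\sym n$ with $d^\lambda$ coprime to $pq$. If $\lambda \neq \lambda^t$, then $\res{\chi^\lambda}{\alt n}$ is already irreducible of degree $d^\lambda$ and non-trivial since $\lambda \neq (n)$, and we are done. If $\lambda = \lambda^t$, the restriction splits as $\psi^\lambda_+ + \psi^\lambda_-$ with distinct constituents of degree $d^\lambda/2$, which forces $d^\lambda$ to be even; combined with $\gcd(d^\lambda, pq) = 1$ this gives $2 \notin \{p,q\}$, so both $\psi^\lambda_\pm$ still have $\{p,q\}'$-degree. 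Moreover $\lambda$ cannot be a $p$-core: its $p$-core agrees with that of $(n)$, namely the single-row partition of size $s < p \leq n$, which would force $\lambda = (s)$, contradicting $|\lambda|=n$. Hence by \cite[6.1.46 Theorem]{JK} the two constituents $\psi^\lambda_\pm$ lie in the same $p$-block — the principal one by Remark \ref{rem:pprincSnAn} — and analogously for $q$. Either constituent then serves as the desired non-trivial character.

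The case $r \in \{0,1\}$ requires separate treatment because Proposition \ref{pqSn} there only exhibits $\sgn = \chi^{(1^n)}$, which restricts trivially to $\alt n$. Under $r \geq s$ this forces $s \in \{0,1\}$ as well, and I would split into $(r,s) \in \{(0,0),(1,0),(1,1)\}$, constructing a non-self-conjugate hook or almost hook tailored to each subcase. For instance, when $(r,s)=(0,0)$ (so $pq \mid n$), the hook $\lambda = (1^{n-2},2)$ has degree $n-1$ coprime to $pq$ and is $\{p,q\}$-principal by Lemma \ref{lem:almosthookpartitionpprincipal}; since it is not self-conjugate for $n \geq 4$, restricting $\chi^\lambda$ already produces the desired character. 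For $(1,0)$ and $(1,1)$ I would choose hooks $(1^{n-\ell},\ell)$ or almost hooks $(1^{n-\ell-2},2,\ell)$ whose principality follows from Lemma \ref{lem:almosthookpartitionpprincipal} once $\ell$ is chosen in the right residue classes modulo $p$ and $q$, and whose degree is controlled via Lemmas \ref{lem:degalmosthookpartition} and \ref{lem:degalmosthookpprimeor2}.

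The genuinely delicate configurations are those where every $\{p,q\}$-principal partition of $n$ with $\{p,q\}'$-degree is self-conjugate \emph{and} $2 \in \{p,q\}$, which by Theorem \ref{thm:C} occur precisely when $n=9$ and $\{p,q\}=\{2,3\}$, or $n=p$ is a Fermat prime, or $n-1=p$ is a Mersenne prime. Here the plan is to invoke Lemma \ref{lem:degalmosthookpprimeor2}(i)(c) or (ii)(c) to produce a self-conjugate (almost) hook $\lambda$ with $(d^\lambda)_2 = 2$: then the constituents $\psi^\lambda_\pm$ of $\res{\chi^\lambda}{\alt n}$ have odd degree $d^\lambda/2$ coprime to the odd prime in $\{p,q\}$, and by the same argument as above both lie in both principal blocks, providing the non-trivial character. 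The main obstacle is precisely this verification: checking that the self-conjugate (almost) hook singled out by Lemma \ref{lem:degalmosthookpprimeor2} in each of the Fermat, Mersenne, and $n=9$ configurations is indeed $\{p,q\}$-principal with $(d^\lambda)_2 = 2$, which reduces to a careful number-theoretic analysis tied to the specific arithmetic of $n$.
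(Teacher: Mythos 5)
Your generic argument for $r \geq \max(2,s)$ and the $(r,s)=(0,0)$ case are sound, and the self-conjugate cleanup via $\gcd(d^\lambda,pq)=1 \Rightarrow 2\notin\{p,q\}$ is a valid observation (the paper short-circuits it by noting that the partitions in Table~\ref{tab:partitionsforSn} are never self-conjugate when $r\geq 2$). The substantive gap is in the cases $(r,s)=(1,0)$ and $(1,1)$: you only say you \emph{would choose} hooks or almost hooks in suitable residue classes, but the actual choice depends on a fine arithmetic case analysis that you have not carried out and that constitutes most of the paper's proof (Cases~2.1--2.4 and~3.1--3.3 with further subdivisions, driven by whether $n=q^k\tilde m+1$ has $\tilde m=1$, by the relative size of $q^k$ and $c_1p^{a_1}$, by the size of $(n-3)_2$, and so on). Producing in each subcase an explicit $\{p,q\}$-principal partition whose degree is coprime to $pq$ (or, when self-conjugate with $2\in\{p,q\}$, whose degree has $2$-part exactly~$2$) is not a routine gap-fill --- it is the proof.

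Moreover, your appeal to Theorem~\ref{thm:C} to locate the ``genuinely delicate'' triples is circular: the paper's proof of Theorem~\ref{thm:C} reads off its characterization directly from the case analysis in the proofs of Propositions~\ref{pqSn} and~\ref{pqAn}, so it cannot be assumed as input here. You must discover the delicate configurations from the subcase analysis itself; in the paper they all surface inside Case~2 ($s=0$, $r=1$) when $n=c_1p^{a_1}$ and $n-1$ is a power of~$q$. Note also that the paper already uses a self-conjugate partition in Case~2.3 with $p\neq 2$, a configuration outside your ``self-conjugate and $2\in\{p,q\}$'' description; your $\gcd$ argument happens to cover it once the partition is named, but this illustrates that the troublesome set is not cleanly delimited by Theorem~\ref{thm:C}'s statement without first doing the subcase work.
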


In many cases, the statement of Proposition~\ref{pqAn} can be deduced from its analog for $\sym n$ (Proposition~\ref{pqSn}) by restricting characters of the symmetric group $\sym n$ to its normal subgroup $\alt n$. %
We use ad-hoc arguments to complete the remaining cases where Proposition~\ref{pqSn} yields the sign character.

\begin{proof}[Proof of Proposition~\ref{pqAn}]
	Let $n\geq 4$ and $p,q\leq n$ be distinct primes. We reuse the notation of the proof of Proposition~\ref{pqSn}. As before, let $0\leq s<p$ and $0\leq r<q$ such that $n\equiv s \bmod p$ and $n\equiv r \bmod q$, and we keep assuming that $r\geq s$. Write $n=s+\sum_{i=1}^t c_i p^{a_i}$ with $1\leq a_1< a_2< \ldots< a_t$, and $1\leq c_i<p$ for all $1\leq i\leq t$.

	Let $\mu\vdash n$ be the partition constructed in the proof of Proposition~\ref{pqSn} parametrizing a non-trivial character in $\irrprime p {\blprinc{p}{\sym n}} \cap \irrprime q {\blprinc{q}{\sym n}}$.
	For $r\geq 2$, the partition $\mu$ is never equal to~$(1^n)$ nor self-conjugate,
	refer to Table \ref{tab:partitionsforSn} for inspection.
	Thus, the corresponding character $\chi^\mu$ restricts non-trivially and irreducibly to~$\alt n$, as said before. Hence, the same partition then also gives rise to a non-trivial irreducible character of~$\alt n$ lying in the intersection of the principal $p$- and $q$-block, and of degree $d^\mu$ coprime to both $p$ and $q$.

	Now, we are left with the case $s\leq r<2$, i.e., we either have $r=s=0$, or $r=s=1$ or $r=1=s+1$. In this situation, we have $\mu=(1^n)$ which corresponds to the trivial character of~$\alt n$.

	\paragraph*{\textbf{Case 1: $s=r=0$}} Consider the non self-conjugate hook partition $\lambda=(1,n-1)$, which is $\{p,q\}$-principal for $\sym n$, and hence for $\alt n$, by Lemma~\ref{lem:almosthookpartitionpprincipal}. Moreover, its degree  $d^\lambda=n-1$ (by Lemma~\ref{lem:degalmosthookpartition}) is coprime to both $p$ and $q$ as~$pq$ divides~$n$. Thus, the irreducible character $\res{\chi^\lambda}{\alt n}\in \irr {\alt n}$ does the job.

	We are left with $r=s=1$ or $r=1=s+1$. Then the strategy is as follows:\newline
	In any case, we look for a partition $\lambda\vdash n$ such that $\lambda\neq \mu=(1^n)$, which is \linebreak $\{p,q\}$-principal for $\sym n$ (and hence for $\alt n$ by Remark \ref{rem:pprincSnAn}).
	If $2\notin \{p,q\}$, we want a partition $\lambda$ of degree $d^\lambda$ coprime to~$p$ and $q$.
Assuming $\lambda$ is not self-conjugate, the corresponding non-trivial character $\res{\chi^\lambda}{\alt n}$ is irreducible and lies in both the principal $p$- and $q$-block of~$\alt n$, and has degree $d^\lambda$ coprime to both $p$ and $q$. If $\lambda$ is self-conjugate,
	any of the two irreducible constituents $\psi^\lambda_{\pm}$ of~$\res{\chi^\lambda}{\alt n}$ lies in both the principal $p$- and $q$-block of~$\alt n$, and has degree $\frac{d^\lambda}{2}$ coprime to both $p$ and $q$.

	In the case $2\in \{p,q\}$ we either give a non-self-conjugate partition and use the same reasoning as before,
	or in some cases consider a self-conjugate partition $\lambda \vdash n$. In the latter case we want
	that the degree $d^\lambda$ has $2$-part $2$ and is coprime to the odd prime $r\in \{p,q\}\setminus\{2\}$. Then, any of the two irreducible constituents $\psi^\lambda_{\pm}$ of the character $\res{\chi^\lambda}{\alt n}$ (having degree~$\frac{d^\lambda}{2}$) does the job.

	\paragraph*{\textbf{Case 2: } $s=0$ and $r=1$}
	In this case, we have $n=\sum_{i=1}^t c_i p^{a_i}=c_1p^{a_1}+\dots$ as~$s=0$. In particular, $p$ divides~$n$.
	Let $n=q^k \tilde m +1$ with positive integers $k,\tilde m\geq 1$ such that $\tilde m$ is coprime to~$q$.

	We distinguish the following cases: either we have $q^k<c_1 p^{a_1}$ or $q^k > c_1 p^{a_1}$.
	Note that if $q^k < c_1 p^{a_1}$ and $\tilde m=1$, we have $n=c_1p^{a_1}=q^k+1$. In this case, which we treat separately, we also distinguish if $q=2$ or not. Table \ref{tab:Anextracases1} lists the partition $\lambda\vdash n$ considered in any of the four resulting cases.

	\begin{table}[h]
		\begin{center}
			\begin{tabular}{clll}
				\toprule
				Case & Defining conditions            & Partition $\lambda$                         & $\lambda$ self-conjugate?        \\
				\midrule
				2.1 & $q^k<c_1 p^{a_1}$ and $\tilde m\neq 1$ & $(1^{n-q^k-1},q^k+1)$                & no unless $\tilde m=2$ \\
				2.2 & $q^k>c_1 p^{a_1}$                      & $(1^{n-c_1p^{a_1}-2}, 2,c_1p^{a_1})$ & no                     \\
				2.3 & $n=c_1 p^{a_1}=q^k+1$, $q\neq 2$       & $(1^{\frac{n}{2}-2},2,\frac{n}{2})$  & yes                    \\
				2.4 & $n=c_1 p^{a_1}=2^k+1$, $q=2$           & $(1^{2^{k-1}},2^{k-1}+1)$            & yes                    \\
				\bottomrule
			\end{tabular}
		\end{center}
		\caption{Partitions $\lambda$ for non-trivial characters in \linebreak $\irrprime p {\blprinc{p}{\alt n}} \cap \irrprime q {\blprinc{q}{\alt n}}$, assuming $n=q^k\tilde m +1$ and $n\equiv 0\bmod p$.}
		\label{tab:Anextracases1}
	\end{table}

	\subparagraph*{\textit{Case 2.1: } $q^k<c_1 p^{a_1}$ and $\tilde m\neq 1$}
	Consider the hook partition $\lambda=(1^{n-q^k-1},q^k+1)$ which is $p$-principal (as~$s=0$) as well as $q$-principal for $\sym n$ by Lemma~\ref{lem:almosthookpartitionpprincipal}.
	By Lemma~\ref{lem:degalmosthookpprimeor2}~(i)(a) (with $\ell=q^k+1$), its degree $d^\lambda$ is coprime to~$p$ using that $\ell\leq c_1 p^{a_1}$ by assumption.
	By~(i)(b) of the same lemma, we deduce that $d^\lambda$ is coprime to~$q$ as the $q$-part of~$\ell-1$ is $q^k$.
	Note that the partition $\lambda$ is self-conjugate if and only if $\tilde m=2$. In this case, the primes $p$ and $q$ must be odd as~$n=\tilde m q^k+1=2q^k+1$ is odd (so $p$ is odd) and $2=\tilde m$ is coprime to~$q$.
	In total, either the character $\res{\chi^\lambda}{\alt n}$ (irreducible if $\tilde m>2$) or one of its irreducible constituents $\psi^\lambda_{\pm}$ does the job. %

	In the next two cases, we consider an almost hook partition $\lambda=(1^{n-\ell-2},2,\ell)\vdash n$ with $2\leq\ell\leq n-2$ divisible by~$p$. Since we have $r=1$ (resp. $s=0$, so $p$ divides~$n-\ell$), the partition $\lambda$ is $q$-principal (resp. $p$-principal) for $\sym n$, by Lemma~\ref{lem:almosthookpartitionpprincipal}.

	Thus, the only thing to show is that the degree $d^\lambda$ is coprime to both $p$ and $q$, depending on the choice of~$\ell$.

	\subparagraph*{\textit{Case 2.2: }$q^k>c_1 p^{a_1}$}
	Let $\ell=c_1 p^{a_1}$, and consider the partition $\lambda =(1^{n-c_1p^{a_1}-2},2,c_1p^{a_1})$ which is non-self-conjugate. %
	Indeed, the degree $d^\lambda$ is coprime to both $p$ and $q$.
	This follows from Lemma~\ref{lem:degalmosthookpprimeor2}~(ii)(a) (for the prime $p$) using $\ell=c_1p^a$, and~(ii)(b) (for the prime $q$) using $\ell=c_1p^a<q^k$.
	All in all, the %
	irreducible character $\res{\chi^\lambda}{\alt n}\in \irr {\alt n}$ does the job.

	\subparagraph*{\textit{Case 2.3: }$n=c_1 p^{a_1}=q^k+1$ and $q\neq 2$}
	Here, $n=q^k+1$ is even as $q$ is odd, so either $c_1$ is even or we have $p=2$.
	This time, we choose $\ell=n-\ell=\frac{n}{2}=\frac{c_1 p^{a_1}}{2}$ such that the partition $\lambda=(1^{\frac{n}{2}-2},2,\frac{n}{2})$ is self-conjugate.
	Again $\ell=\frac{n}{2}<q^k$ holds, so the degree $d^\lambda$ is coprime to~$q$ by the same reasoning as before (Lemma~\ref{lem:degalmosthookpprimeor2}~(ii)(b)).

	For $p\neq 2$, we have $\ell=\frac{c_1}{2} p^{a_1}$, thus $\ell$ has $p$-part $p^{a_1}$ (the same as~$n=c_1p^{a_1}$) and Lemma~\ref{lem:degalmosthookpprimeor2}~(ii)(a) yields that the degree $d^\lambda$ is coprime to~$p$.

	We are left with the case $p=2$. Then, we have $c_1=1$, so $n=2^{a_1}=q^k+1$ and $\ell=2^{a_1-1}$. (One can show that $k=1$ holds, so $q$ is a so-called Mersenne prime, see Lemma~\ref{lem:fermatmersenne}~(ii) below, but this doesn't matter here.)
	We have $(d^\lambda)_2=2$ by Lemma~\ref{lem:degalmosthookpprimeor2}~(ii)(c).

	In total, as $\lambda$ is self-conjugate, one of the irreducible constituents $\psi_{\pm}^\lambda$ of the character~$\res{\chi^\lambda}{\alt n}$ is a character of~$\alt n$ having all the desired properties.

	\subparagraph{\textit{Case 2.4: }$n=c_1 p^{a_1}=2^k+1$}
	In this case, we have $k>1$ as $n\geq 4$. Consider the self-conjugate hook partition $\lambda=(1^{2^{k-1}},2^{k-1}+1)\vdash n$ which is $\{2,p\}$-principal for $\sym n$ by Lemma~\ref{lem:almosthookpartitionpprincipal} (with $\ell=2^{k-1}+1$).
	Its degree $d^\lambda$ is coprime to~$p$ by Lemma~\ref{lem:degalmosthookpprimeor2}~(i)(a)
	and we have~$(d^\lambda)_2=2$ by Lemma~\ref{lem:degalmosthookpprimeor2}~(i)(c).
	Again, any of the irreducible constituents $\psi_{\pm}^\lambda$ of the character~$\res{\chi^\lambda}{\alt n}$ does the job. \newline
	We remark that if $c_1>1$, we could also consider the almost hook partition $(1^{n-p^{a_1}-2},2,p^{a_1})$, see the proof of Theorem~\ref{thm:C}, or its conjugate $(1^{p^{a_1}-2},2,n-p^{a_1})$. %

	Now, we move to the last big case.

	\paragraph*{\textbf{Case 3: $s=r=1$}}
	Note that in this case, as $p$ and $q$ divide~$n-1$, we have $n\geq 13$ unless $n=7=3\cdot 2+1$ (and $\{p,q\}=\{2,3\}$) or $n=11=5\cdot 2+1$ (and $\{p,q\}=\{2,5\}$). %

	In the following, we always look at a non-self-conjugate almost hook partition $\lambda\vdash n$. Then we use Lemma~\ref{lem:degalmosthookpartition} to determine the degree $d^\lambda$ of the partition $\lambda$ considered. In any case, the partition $\lambda$ will always be $\{p,q\}$-principal for $\sym n$ using that $p$ and $q$ divide~$n-1$, the definition of~$\lambda$, and Lemma~\ref{lem:almosthookpartitionpprincipal}. So, we only have to determine the $p$- and $q$-part of the degree $d^\lambda$ of the partition $\lambda$ to conclude. Then, the irreducible character $\res{\chi^\lambda}{\alt n}$ does the job.

	\subparagraph*{\textit{Case 3.1: }$2\notin \{p,q\}$ or $n\not \equiv 3 \bmod 4$}
	Here, consider the %
	partition $\lambda=(2,n-2)$
	of degree $d^\lambda=\frac{n\cdot (n-3)}{2}$ which is, by construction, coprime to both~$p$ and $q$, and we are done.

	Now, assume $2\in \{p,q\}$, and that $n\equiv 3 \bmod 4$. Without loss, we can assume $p=2$. So, we have $$n=1+2^{a_1}+\sum_{i=2}^t  2^{a_i}=3+\sum_{i=2}^t 2^{a_i}$$ with $a_1=1$ and $a_2\geq 2$. %
	In particular, $n$ is odd and the $2$-part of~$n-3$ is $(n-3)_2=2^{a_2}\geq 2^2$.
	As before, write $n=q^k \tilde m +1$ with $\tilde m\geq 1$ such that $ \tilde m$ is coprime to~$q$. Since~$n$ and $q$ are odd, $\tilde m$ must be even.

	\subparagraph*{\textit{Case 3.2:}  $p=2$ and $n\equiv 3 \bmod 4$ and $q\geq 5$} Consider the partition $\lambda=(1,2,n-3)$ which is non-self-conjugate (as~$n\geq 7$).
	Its degree $d^\lambda=\frac{n \cdot(n-2) \cdot(n-4)}{3}$ is odd as~$n$ is, and coprime to~$q$ as~$q\geq 5$ divides~$n-1$ but not $n\cdot(n-2)\cdot (n-4)$.

	Now, we are left with the situation $q=3$ (remember that we have $p=2$ and still assume $n\equiv 3 \bmod 4$).

	\subparagraph*{\textit{Case 3.3:} $p=2$ and $n\equiv 3 \bmod 4$ and $q=3$} As~$q=3$, we have $n=3^k \tilde m+1$.

	First require that $\tilde m-1$ is coprime to~$3$ if $k=1$. In this case, consider again the partition $\lambda=(1,2,n-3)$ which is non-self-conjugate (as~$n\geq 7$) and $\{2,3\}$-principal for~$\sym n$ by Lemma~\ref{lem:almosthookpartitionpprincipal}.
	Its degree $d^\lambda=\frac{n \cdot(n-2) \cdot(n-4)}{3}$ is odd.
	Note that the $3$-part of the factor $n-4=3\cdot (3^{k-1}\tilde  m-1)$ is always
	$3$, %
	by assumption on $\tilde m$ and $k$. %
	Thus, the degree is coprime to~$3$ as well, and we are done.%

	The remaining case to consider is
	$n=3\tilde m+1$ with $\tilde m-1$ a multiple of~$3$.
	Then, we have $\tilde m\geq 4$ (as~$\tilde m$ is even), and we deduce $n\geq 4\cdot 3+1=13$. %
	Now we look at $(n-3)_2=2^{a_2}\geq 2^2$ and distinguish whether $a_2=2$ or not.

	If $a_2=2$, we have $n=1+2+2^2+\sum_{i=3}^t  2^{a_i}=7+\sum_{i=3}^t  2^{a_i}$, so $(n-7)_2=2^{a_3}\geq 2^3$ holds.
	Consider the almost hook partition $\lambda=(1^{n-12},5,7)\vdash n$
	which is $\{2,3\}$-principal by Lemma~\ref{lem:almosthookpartitionpprincipal} using the fact that $n-4$ is a multiple of~$3$ and $n-7$ is even.
	The degree~$d^\lambda$ is $$d^\lambda= \frac{\prod\limits_{\substack{i=1\\i \neq 3}}^7 (n-7+i) \cdot \prod_{i=1}^4 (n-7-i)}{ \prod\limits_{\substack{i=1\\i\neq 3}}^7 i \cdot \prod_{i=1}^4 i}=\frac{\prod\limits_{\substack{i=0\\i\neq 4,7}}^{11} (n-i)}{\prod_{i=1}^8 i}.$$
	As the $2$-part of~$n-7$ is $(n-7)_2=2^{a_3}\geq 2^3$, we have $(n-7\pm i)_2=i_2$ for any $1\leq i\leq 7<2^3$, and we deduce that $d^\lambda$ is odd.
	The $3$-part of the numerator of~$d^\lambda$ is $((n-1)(n-10))_3=(3\tilde m \cdot 3(\tilde m-3))_3=3^2$ as~$3$ divides~$\tilde m-1$, and thus equals the $3$-part of the denominator. Hence, the degree is coprime to~$3$ and we are done.

	If $a_2\geq 3$, consider the partition $\lambda=(1^{n-8},4^2)$
	which is $\{2,3\}$-principal by Lemma~\ref{lem:almosthookpartitionpprincipal} using that $n-4$ is a multiple of~$3$ and $n-3$ is even. The degree $d^\lambda$ is
	\begin{align*}
		d^\lambda=\frac{\prod_{i=1}^{3} (n-3+i) \cdot \prod_{i=2}^{4} (n-3-i)}{ \prod_{i=1}^{3} i \cdot \prod_{i=2}^{4} i }=\frac{\prod\limits_{\substack{i=0 \\i\neq 3,4}}^{7} (n-i)}{\prod\limits_{\substack{i=1\\i\neq 5}}^6 i}.
	\end{align*}
	For any $1\leq i\leq 4=2^2$, we have $(n-3\pm i)_2=i_2$ as~$(n-3)_2=2^{a_2}\geq 2^3> i_2$. Hence, the degree of the partition $\lambda$ is odd.
	Since~$3$ divides~$\tilde m-1$, the $3$-part of the numerator of~$d^\lambda$ is $((n-1)(n-7))_3=(3\tilde m \cdot 3(\tilde m-2))_3=3^2$ and cancels with the $3$-part of the denominator. Hence, the degree is coprime to~$3$.

	To sum up, in all subcases the irreducible character $\res{\chi^\lambda}{\alt n}$ does the job, as explained before, and we are done with Case 3 ($r=s=1$), which is the last case. 

	All in all, we always find a non-trivial irreducible character of~$\alt n$ lying in both the principal $p$- and $q$-block of~$\alt n$ having degree $d^\lambda$ coprime to both $p$ and $q$.
\end{proof}

\begin{remark}
	As for the symmetric groups in Proposition~\ref{pqSn}, in Proposition~\ref{pqAn} we reprove \cite[Proposition~3.11]{NRS} for alternating groups and  $2\in\{p,q\}$ and extend 
 this result to arbitrary primes $p$,~$q$. %
	Remark that some cases are missing in the original proof of Navarro--Rizo--Schaeffer Fry. However, the addendum \cite{NRSAddendu} handles all of them except for Case~3 (i.e. $r=1=s$), using our notation. In this case, their original proof would still give the inappropriate sign character restricting trivially to the alternating group.\newline
	Our case distinction in Case 2 (i.e. $r=1=s-1$), another missing case fixed in the addendum, is inspired by \cite{NRSAddendu}, see Table \ref{tab:Anextracases1}.
\end{remark}

As a corollary, we conclude that Conjecture \rm{(NRS)} is true for all almost simple groups with socle an alternating group, thus proving Corollary~\ref{coro:B}.

\begin{proof}[Proof of Corollary \ref{coro:B}] Let $A$ be an almost simple group with socle (isomorphic to) $\alt n$.
	A~classical result of H\"older (\cite[Satz 5.13]{sambale}) states that
	$$\aut {\alt n}\cong \begin{cases}
			\sym n,             & \text{if }  n\neq 6, \\
			\sym 6 \rtimes C_2, & \text{if } n=6.      \\
		\end{cases}$$
	Hence, if $A\in \{\alt n, \sym n\}$ (up to isomorphism), the statement follows from Theorem~\ref{thm:A}, %
	and we are done if $n\neq 6$. If $n=6$, we additionally have to consider the full automorphism group $\aut {\alt 6}=\sym 6 \rtimes C_2$ and its two subgroups of index $2$ not isomorphic to~$\sym 6$. Let $A$ be one of these groups, i.e., we let $A\ \in \{\PGL{2}{9}, M_{10}, \aut {\alt 6}\}$ (up to isomorphism). In this case, the conjecture can be checked directly using the Character Table Library of \cite{GAP}. More precisely, there is always a non-trivial %
	character in \linebreak $\irrprime p {\blprinc{p}{A}} \cap \irrprime q {\blprinc{q}{A}}$ for all pairs of distinct primes~$p$, $q$ such that $pq$ divides~$\abs A$ (i.e., $p,q\in \{2,3,5\}$, using~$\abs{\aut {\sym 6}}=2 \cdot \abs{\sym 6} = 2^5 \cdot 3^2 \cdot 5$). In all cases, the assertion of Corollary~\ref{coro:B} follows by contraposition.
\end{proof}

\subsection{The small intersection case} \label{sec:smallintersection}

The main difficulty in the proof of Proposition~\ref{pqAn} occurs whenever $\irrprime p {\blprinc{p}{\sym n}} \cap \irrprime q {\blprinc{q}{\sym n}}=\{\mathbbm{1}_{\sym n}, \sgn\}$, i.e., the set of consideration consists only of the two linear characters of~$\sym n$ (see Example \ref{ex:trivialintersection9}).
The aim of this section is to characterize all tuples $(n,p,q)$ having the latter property and investigate
$\irrprime p {\blprinc{p}{\alt n}} \cap \irrprime q {\blprinc{q}{\alt n}}$ in this case, thus proving Theorem~\ref{thm:C}.

First, we study Fermat and Mersenne numbers.
\begin{definition} A \emph{Fermat} (resp. \emph{Mersenne}) \emph{number} is a positive integer of the form $2^k+1$ (resp. $2^k-1$) for some $k\geq 1$. An odd prime is called \emph{Fermat prime} (resp. \emph{Mersenne prime}) if it is a Fermat (resp. Mersenne) number.%
\end{definition}
The following elementary number-theoretical lemma says that a Mersenne number which is not prime is never a proper prime power. The same holds for Fermat numbers except~$9$.

\begin{lemma} \cite[Lemma 4.5]{BMO} \label{lem:fermatmersenne}
	Let $p$ be an odd prime and $a, k\geq 1$.
	\begin{enumerate}[\rm(i)]
		\item If $p^a=2^k+1$, then we have $a=1$ or $p^a=9$ \textup(i.e., $a=2$ and $p=2$\textup).
		\item If $p^a=2^k-1$ with $k\geq 2$, then we have $a=1$.
	\end{enumerate}
\end{lemma}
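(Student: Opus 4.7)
The plan for part (i) is to assume $a\geq 2$ (the case $a=1$ being trivial) and derive that $p^a=9$. From $p^a=2^k+1$ I rewrite $p^a-1=2^k$ and use the standard factorization
\[
p^a-1=(p-1)(p^{a-1}+p^{a-2}+\dots+p+1)=2^k.
\]
Both factors must then be powers of $2$. The second factor is a sum of $a$ odd numbers (since $p$ is odd), so it has the same parity as $a$; being a power of $2$ and at least $p+1\geq 4$, it must be even, which forces $a$ to be even.

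Writing $a=2b$, I then use
\[
p^{2b}-1=(p^b-1)(p^b+1)=2^k,
\]
so $p^b-1$ and $p^b+1$ are powers of $2$ differing by $2$. The only such pair is $(2,4)$, giving $p^b=3$. Hence $p=3$, $b=1$, $a=2$, and $p^a=9$, as claimed.

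For part (ii), I again argue by contradiction, assuming $a\geq 2$ and $k\geq 2$, and split on the parity of $a$. If $a$ is odd (so $a\geq 3$), I factor
\[
p^a+1=(p+1)\bigl(p^{a-1}-p^{a-2}+\dots-p+1\bigr);
\]
the second factor is an alternating sum of $a$ odd terms, hence odd and strictly greater than $1$, so it cannot divide $2^k$. If $a$ is even, then $p^a\equiv 1\pmod 4$ since $p$ is odd, so $p^a+1\equiv 2\pmod 4$, contradicting $2^k=p^a+1$ with $k\geq 2$. Thus $a=1$.

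The only moderately non-trivial ingredient is the elementary observation that $2$ and $4$ are the unique pair of powers of $2$ differing by $2$, which is where the exceptional case $p^a=9$ enters in (i); everything else is a routine parity and mod-$4$ analysis of classical factorizations of $p^a\mp 1$.
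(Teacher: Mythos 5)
Your proof is correct and uses essentially the same approach as the paper: the factorization $p^a-1=(p-1)\sum_{i=0}^{a-1}p^i$ plus a parity argument to force $a$ even in (i), followed by the observation that $2$ and $4$ are the unique powers of $2$ differing by $2$; and the mod-$4$ constraint together with the alternating factorization of $p^a+1$ in (ii). The only cosmetic difference is in (ii), where the paper first rules out even $a$ via the mod-$4$ argument and then applies the alternating sum to the odd case, whereas you run the two subcases in parallel; the ingredients are identical.
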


\begin{proof}
	In the following, we use the telescoping sums
	\begin{align*}\tag{a}\label{eq:telplus}
		(p-1)\sum_{i=0}^{a-1} p^i=\sum_{i=0}^{a-1}(p^{i+1}-p^i)=p^a-1
	\end{align*}
	and
	\begin{align*}\tag{b}\label{eq:telminus}
		(p+1)\sum_{i=0}^{a-1} (-p)^i & =-((-p)-1)\sum_{i=0}^{a-1} (-p)^i=-\sum_{i=0}^{a-1}((-p)^{i+1}-(-p)^i) \\
		                             & =-((-p)^a-1)=(-1)^{a+1} p^a +1.
	\end{align*}

	(i) Assume that $p^a=2^k+1$ and $a\geq 2$.
	Equation~\eqref{eq:telplus} yields
	$2^k=p^{a}-1=(p-1) \sum_{i=0}^{a-1} p^i $, and it follows that the factor $y\coloneqq \sum_{i=0}^{a-1} p^{i} $ is even. Since~$y$ is a sum of~$a$ odd numbers (as $p$ is odd), $a$ must be even.
	Now, we have $2^k={p^{2\frac{a}{2}}}-1=(p^{\frac{a}{2}}+1)(p^{\frac{a}{2}}-1)$. Write $ p^{\frac{a}{2}}\pm 1 = 2^{t_{\pm}}$ with integers $t_\pm\geq 1$ such that $t_+ + t_- = k$.
	Then, we have $$2^{t_+}=p^{\frac{a}{2}}+ 1=(p^{\frac{a}{2}}- 1) +2 =2^{t_-}+2=2(2^{t_{-}-1}+1).$$
	We conclude that $t_{-}=1$, $t_+=2$, and $p^a=2^k+1=2^{t_{+}+ t_{-}}+1=9$.

	(ii) Assume that $p^a=2^k-1$. Note that $a$ must be odd, otherwise we get \linebreak $1\equiv p^{2\frac{a}{2}}=2^k-1=4 \cdot 2^{k-2}-1\equiv 3 \bmod 4$ using $k\geq 2$, a contradiction.

	Using Equation~\eqref{eq:telminus} and that $a$ is odd, we then obtain the equation \linebreak $2^k=p^a+1=(p+1) \sum_{i=1}^{a-1}(-p)^i$. Set $x\coloneqq \sum_{i=1}^{a-1}(-p)^i$. If $a>1$, the last equation implies that $2$ divides~$x$, %
	but $x\geq 1$ is certainly odd (since~$p$ and $a$ are). Thus, $x=1$ and~$a=1$ follow.
\end{proof}

Next, we look at the set $\irrprime 2 {\blprinc{2}{\sym n}} \cap \irrprime p {\blprinc{p}{\sym n}}$ for a Fermat or Mersenne prime~$p$ and specific choices of~$n$. %

\begin{lemma}\label{lem:pqSnfermatprime}
	Let $p\geq 3$ be a Fermat prime.
	Then, we have $$\irrprime{2}{\blprinc{2}{\sym p}}\cap \irrprime{p}{\blprinc{p}{\sym p}}=\irrprime{\{2,p\}}{\sym p}=\{\mathbbm{1}_{\sym p}, \sgn\}.$$
	Moreover, if  $p=2^k+1$ with $k\geq 2$ \textup(i.e., $p\neq 3$\textup) and $\lambda=(1^{2^{k-1}},2^{k-1}+1)\vdash p$, we have $$\irrprime{2}{\blprinc{2}{\alt p}}\cap \irrprime{p}{\blprinc{p}{\alt p}}=\{\mathbbm{1}_{\alt p},\psi^\lambda_+, \psi^\lambda_-\}.$$
\end{lemma}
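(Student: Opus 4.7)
The plan is to reduce everything to a hook-partition analysis on $\sym p$ and then transfer to $\alt p$ via Clifford theory. Since $p$ divides $|\sym p|$ exactly once, every non-hook partition of $p$ is itself a $p$-core and hence labels a $p$-defect-zero character of degree divisible by $p$. Consequently $\irrprime{p}{\sym p} = \irrprime{p}{\blprinc{p}{\sym p}}$ is indexed precisely by the hook partitions $\mu = (1^{p-\ell}, \ell)$ for $1 \leq \ell \leq p$, all of which are $p$-principal.

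Next I would isolate those hooks that are simultaneously $2$-principal and of odd degree. The sufficient condition in Lemma~\ref{lem:almosthookpartitionpprincipal} reads $\ell$ odd (since $p$ is odd); I would verify the converse by a short $2$-abacus computation from the $\beta$-set $\{1,2,\ldots,p-\ell,p\}$, which shows that for $\ell$ even the $2$-core of $\mu$ is $(2,1)$ rather than $(1)$. For the degree, Lemma~\ref{lem:degalmosthookpartition} yields $d^{\mu} = \binom{p-1}{\ell - 1} = \binom{2^{k}}{\ell-1}$, which by Kummer's theorem (or Lucas) is odd precisely when $\ell - 1 \in \{0, 2^{k}\}$. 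Combined with $\ell$ odd this forces $\ell \in \{1, p\}$, so only $\mathbbm{1}_{\sym p}$ and $\sgn$ survive, proving both equalities of the first assertion.

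For the alternating group, the main block-theoretic input is that $\blprinc{p}{\sym p}$ is the unique $p$-block of $\sym p$ covering $\blprinc{p}{\alt p}$: every other $p$-block of $\sym p$ is a defect-zero singleton $\{\chi^{\mu}\}$ with $\chi^{\mu}(1) > 1$, so its covered block(s) of $\alt p$ cannot contain $\mathbbm{1}_{\alt p}$. Combined with Remark~\ref{rem:pprincSnAn} and the conjugation-invariance of $2$-principality for hooks (the conjugate of $(\ell, 1^{p-\ell})$ is $(p-\ell+1, 1^{\ell-1})$, and $\ell$, $p-\ell+1$ have the same parity since $p$ is odd), this shows that every character in $\irrprime{2}{\blprinc{2}{\alt p}} \cap \irrprime{p}{\blprinc{p}{\alt p}}$ is a constituent of $\res{\chi^{\mu}}{\alt p}$ for some hook $\mu$ with $\ell$ odd.

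Finally I would split the remaining hooks $\mu$ (with $\ell$ odd) into self-conjugate and non-self-conjugate ones. The unique self-conjugate such hook has $\ell = (p+1)/2 = 2^{k-1}+1$, namely the partition $\lambda$ of the statement. For $\mu \neq \lambda$, $\res{\chi^{\mu}}{\alt p}$ is irreducible of degree $\binom{2^{k}}{\ell - 1}$, and the previous binomial analysis forces $\ell \in \{1,p\}$, yielding only $\mathbbm{1}_{\alt p}$. For $\lambda$, Clifford theory gives $\res{\chi^{\lambda}}{\alt p} = \psi^{\lambda}_{+} + \psi^{\lambda}_{-}$ with each summand of degree $\binom{2^{k}}{2^{k-1}}/2$; by Kummer's theorem the single carry in $2^{k-1}+2^{k-1}$ makes the central binomial have $2$-adic valuation exactly one, so each half is odd and trivially coprime to $p$. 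Since $\lambda$ is neither a $2$-core (a hook of size $p \geq 5$ cannot be a staircase) nor a $p$-core (it has positive $p$-weight), both $\psi^{\lambda}_{\pm}$ genuinely lie in $\blprinc{2}{\alt p} \cap \blprinc{p}{\alt p}$, so $\{\mathbbm{1}_{\alt p}, \psi^{\lambda}_{+}, \psi^{\lambda}_{-}\}$ exhausts the intersection. The main subtlety I anticipate is the necessity direction of the $2$-principality criterion for hooks, which Lemma~\ref{lem:almosthookpartitionpprincipal} does not supply but should follow cleanly from the sketched $2$-abacus computation.
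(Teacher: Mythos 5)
Your argument is correct and reaches the same conclusion, but it works block-first rather than degree-first and leaves one ingredient unverified. The paper settles everything from degrees alone: Lemma~\ref{lem:degalmosthookpprimeor2}~(i)(a)--(c) shows that a partition of $p$ of degree coprime to $p$ must be a hook, that a non-trivial hook of $n=2^k+1$ has odd degree only for $(1^n)$, and that $(1^{2^{k-1}},2^{k-1}+1)$ is the unique hook with $2$-part of degree equal to $2$; block membership of these few survivors is then immediate from Lemma~\ref{lem:almosthookpartitionpprincipal}. You instead filter first by blocks, and so you need the \emph{necessity} of ``$\ell$ odd'' in the $2$-principality criterion for hooks, which Lemma~\ref{lem:almosthookpartitionpprincipal} does not give; you flag this honestly, and your sketched $\beta$-set/$2$-abacus computation is sound (the $2$-core is indeed $(2,1)$ when $\ell$ is even), but the whole step is dispensable, since your own Kummer/Lucas binomial analysis already forces $\ell\in\{1,p\}$ on $\sym p$ and singles out $\lambda$ on $\alt p$ from the degree alone. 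One small fix at the end: to show $\psi^\lambda_\pm$ lie in the principal blocks of $\alt p$, the remark that ``$\lambda$ is not a $2$-core nor a $p$-core'' only excludes a defect-zero block and is not the relevant point; instead observe that $\lambda$ is $\{2,p\}$-principal for $\sym p$ by Lemma~\ref{lem:almosthookpartitionpprincipal} (since $p-\ell=2^{k-1}$ is even, using $k\geq 2$, and $p\mid p$) and then apply Remark~\ref{rem:pprincSnAn}.
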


Remark that the second statement of the previous lemma is not true for the Fermat prime $p=3=2^1+1$.
Note the alternating group $\alt 3\cong C_3$ is a $3$-group (in particular of odd order). Thus, we have $\irrprime 2 {\blprinc{2}{\alt 3}} \cap \irrprime 3 {\blprinc{3}{\alt 3}}\subseteq \irr {\blprinc{2}{\alt 3}}=\{\mathbbm 1_{\alt 3}\}$.%

\begin{proof}[Proof of Lemma \ref{lem:pqSnfermatprime}]
	Set $n\coloneqq p$.
	Let $\lambda\vdash n$ be a partition of~$n$ of degree coprime to~$p$. The hook formula (alternatively Lemma~\ref{lem:degalmosthookpprimeor2}~(i)(a)) yields that $\lambda$ has a hook of length $p$, so $\lambda$ is a hook itself.
	However, using Lemma~\ref{lem:degalmosthookpprimeor2}~(i)(b) (with $p=2$ and $c=1$), we see that the degree of a non-trivial hook partition of~$2^k+1=n$ is odd if and only if $\lambda=(1^n)$. %
	Thus, we have~$\irrprime{\{2,p\}}{\sym n}=\{\mathbbm{1}_{\sym n}, \sgn\}$. Using that the sign character is $\{2,p\}$-principal by Lemma~\ref{lem:almosthookpartitionpprincipal} (as~$n$ is even and $n=p$), %
	we deduce $\irrprime{2}{\blprinc{2}{\sym n}}\cap \irrprime{p}{\blprinc{p}{\sym n}}=\{\mathbbm{1}_{\sym n}, \sgn\}$.

	Now, let $p=2^k+1$ with $k\geq 2$.
	The partition $\lambda=(1^{2^{k-1}},2^{k-1}+1)\vdash n$ is the only partition of~$n=2^k+1$
	fulfilling $(d^\lambda)_2=2$, by Lemma~\ref{lem:degalmosthookpprimeor2}~(i)(c).
	Then, we have that $\irrprime {\{2,p\}} {\alt n}=\{\mathbbm{1}_{\alt n},\psi^\lambda_+, \psi^\lambda_-\}$.
	Since~$k\geq 2$, %
	the partition $\lambda$ is $\{2,p\}$-principal (for $\sym n$) using Lemma~\ref{lem:almosthookpartitionpprincipal}. Thus,
	the characters $\psi^\lambda_+, \psi^\lambda_-\in \irr {\alt n}$ are also $\{2,p\}$-principal for $\alt n$.
	We conclude that $\irrprime 2 {\blprinc{2}{\alt n}} \cap \irrprime p {\blprinc{p}{\alt n}}=\irrprime {\{2,p\}} {\alt n} =\{\mathbbm{1}_{\alt n},\psi^\lambda_+, \psi^\lambda_-\}$ using that $\lambda$ is self-conjugate and Remark \ref{rem:pprincSnAn}.%
\end{proof}

\begin{lemma}\label{lem:pqSnfmersenneprime}
	Let $p\geq 3$ be a Mersenne prime.
	Then, we have $$\irrprime{2}{\blprinc{2}{\sym {p+1}}}\cap \irrprime{p}{\blprinc{p}{\sym {p+1}}}=\irrprime{\{2,p\}}{\sym {p+1}}=\{\mathbbm{1}_{\sym {p+1}}, \sgn\}.$$
	Moreover, if $p=2^k-1$ with $k\geq 2$ and $\lambda=(1^{2^{k-1}-2},2, 2^{k-1})\vdash p+1$, we have  $$\irrprime{2}{\blprinc{2}{\alt {p+1}}}\cap \irrprime{p}{\blprinc{p}{\alt {p+1}}}=\{\mathbbm{1}_{\alt {p+1}},\psi^\lambda_+, \psi^\lambda_-\}.$$
\end{lemma}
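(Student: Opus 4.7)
The plan is to mirror the proof of Lemma \ref{lem:pqSnfermatprime}, adapted to $n := p + 1 = 2^k$. First I would observe that $v_p(n!) = 1$, so any character of $\sym n$ of degree coprime to $p$ must lie in the principal $p$-block $\blprinc{p}{\sym n}$ (the only $p$-block of positive defect), equivalently its labelling partition has $p$-core $(1)$. A standard $p$-abacus calculation (or direct enumeration of the rim $p$-hooks addable to the partition $(1)$) shows that $\blprinc{p}{\sym n}$ consists of exactly the $p$ partitions $(n)$, $(1^n)$, and the almost hooks $\mu^{(i)} := (p - i,\, 2,\, 1^{i-1})$ for $i = 1, \ldots, p - 2$.

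I would then show that each $\mu^{(i)}$ has even degree, so that $\irrprime{\{2,p\}}{\sym n} = \{\mathbbm{1}_{\sym n}, \sgn\}$. Writing $\ell := p - i \in \{2, \ldots, n - 2\}$, Lemma \ref{lem:degalmosthookpartition} gives
\[
d^{\mu^{(i)}} \;=\; n \cdot \frac{\prod_{j = 1,\, j \neq \ell - 1}^\ell (n - 1 - j)}{\prod_{j = 1,\, j \neq \ell - 1}^\ell j}.
\]
The key input is that $v_2(2^k - 1 - j) = v_2(j + 1)$ for all $0 \le j < 2^k$ (since $v_2(2^k) > v_2(j+1)$), which together with a telescoping sum yields
\[
v_2(d^{\mu^{(i)}}) \;=\; k + v_2(\ell + 1) - v_2(\ell) + v_2(\ell - 1).
\]
For $\ell$ odd, both $v_2(\ell \pm 1) \ge 1$ and $v_2(\ell) = 0$, so the expression is at least $k + 2 \ge 4$; for $\ell$ even, $v_2(\ell \pm 1) = 0$ and $v_2(\ell) \le k - 1$ (since $\ell < 2^k$), giving at least $1$. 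Hence $d^{\mu^{(i)}}$ is always even. Combined with the direct verification that $\mathbbm{1}_{\sym n}$ and $\sgn$ are $\{2, p\}$-principal (by Lemma \ref{lem:almosthookpartitionpprincipal}, using that $n$ is even and $n - 1 = p$), this establishes the first chain of equalities.

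For the alternating-group statement, Clifford theory reduces to two cases. Non-self-conjugate $\mu \vdash n$ with $d^\mu$ coprime to $2p$ are forced to be $(n)$ or $(1^n)$, both of whose restrictions equal $\mathbbm{1}_{\alt n}$. A self-conjugate $\mu$ yielding constituents $\psi^\mu_\pm$ of degree coprime to $2p$ must satisfy $(d^\mu)_2 = 2$ and $p \nmid d^\mu$; the latter again places $\mu$ in $\blprinc{p}{\sym n}$. Comparing $\mu^{(i)}$ with its conjugate $(i + 1,\, 2,\, 1^{p - i - 2})$, self-conjugacy forces $i = (p - 1)/2 = 2^{k-1} - 1$, giving precisely $\lambda = (2^{k-1},\, 2,\, 1^{2^{k-1} - 2})$. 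Substituting $\ell = 2^{k-1}$ in the valuation formula yields $v_2(d^\lambda) = k - (k-1) = 1$, confirming $(d^\lambda)_2 = 2$. Finally, $\lambda$ is $\{2, p\}$-principal for $\sym n$ by Lemma \ref{lem:almosthookpartitionpprincipal} (since $n - \ell = 2^{k-1}$ is even and $n - 1 = p$), hence for $\alt n$ via Remark \ref{rem:pprincSnAn}, yielding the claimed equality.

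I expect the main technical obstacle to be the 2-adic valuation estimate for the almost-hook degrees in the second paragraph (establishing the telescoping identity and verifying positivity in both parities of $\ell$); the rest of the argument is routine bookkeeping patterned on the Fermat-prime case.
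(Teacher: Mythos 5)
Your proof is correct, and it takes a genuinely different route from the paper's. The paper argues from the prime $2$: it invokes Macdonald's theorem that $\sym{2^k}$ has exactly $2^k$ odd-degree irreducible characters, observes these are the hooks, and notes that every nontrivial hook of $p+1$ has degree $\binom{p}{\ell-1}$ divisible by $p$. You instead argue from the prime $p$: since $v_p((p+1)!)=1$, every character of $p'$-degree lies in the unique positive-defect block $\blprinc{p}{\sym n}$, whose $p$ labeling partitions you enumerate as $(n)$, $(1^n)$, and the $p-2$ almost hooks $(1^{i-1},2,p-i)$; you then establish the tidy valuation identity $v_2(d^{\mu^{(i)}}) = k + v_2(\ell+1) - v_2(\ell) + v_2(\ell-1)$ by telescoping and deduce that every almost hook has even degree. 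Your route is more self-contained (it bypasses the Macdonald citation entirely) at the cost of one short valuation computation that effectively reproves the paper's Lemma~\ref{lem:degalmosthookpprimeor2}(ii)(c) for this $n$. The alternating-group half of the two arguments is essentially identical: both isolate the unique self-conjugate almost hook (your $\ell=2^{k-1}$, giving $\lambda=(2^{k-1},2,1^{2^{k-1}-2})$), verify $(d^\lambda)_2=2$, and pass to $\alt n$ via $\{2,p\}$-principality for $\sym n$ and Remark~\ref{rem:pprincSnAn}. One small but correct sharpening worth noting: for odd $\ell$ you actually have $v_2(\ell-1)+v_2(\ell+1)\ge 3$ (the product of two consecutive even numbers is divisible by $8$), so your bound $k+2$ is not tight, though of course adequate for the purpose.
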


\begin{proof}
	Set $n\coloneqq p+1=2^k$.
	By a result of Macdonald \cite[Corollary 1.3]{MacD} there are exactly $2^k$ partitions of~$2^k=n$ of odd degree (i.e., $\abs{\irrprime{2}{\sym {2^k}}}=2^k$).
	These are the hook partitions of~$n$ %
	as their degree is indeed odd by Lemma~\ref{lem:degalmosthookpprimeor2}~(i)(a).
	However, the degree of a hook partition $\lambda\vdash n$ is always divisible by~$p$ unless $\lambda\in\{(n), (1^n)\}$.
	We conclude that
	$$\irrprime 2 {\blprinc{2}{\sym n}} \cap \irrprime p {\blprinc{n}{\sym n}}=\irrprime{\{2,n\}}{\sym n}=\{\mathbbm{1}_{\sym n}, \sgn\}$$
	as the sign character is $\{2,p\}$-principal by Lemma~\ref{lem:almosthookpartitionpprincipal} (as~$n$ is odd and $n-1=p$).

	Now, we examine the set $\irrprime{2}{\blprinc{2}{\alt {n}}}\cap \irrprime{p}{\blprinc{p}{\alt {n}}}$.
	Note that, by the hook formula, a partition $\lambda$ of~$n=p+1$ of degree not divisible by~$p$ must admit a $p$-hook. Thus, we have $\lambda \in\{(1^n), (n)\}$, or $\lambda$ is an almost hook partition of the form $\lambda=\lambda_\ell\coloneqq (1^{n-\ell-2},2,\ell)$ with $2\leq \ell\leq n-2$.
	According to Lemma~\ref{lem:degalmosthookpprimeor2}~(ii)(b), the degree $d^\lambda$ of such an almost hook partition is indeed coprime to~$p$ as~$\ell<p$ in all cases.
	Further, by Lemma~\ref{lem:degalmosthookpprimeor2}~(ii)(c), we have $(d^\lambda)_2=2$  if and only if $\ell=2^{k-1}$.
	Set $\mu=(1^{2^{k-1} -2},2,2^{k-1})\vdash n$.
	Observe that the partition $\mu$ is $\{2,p\}$-principal (for $\sym n$) by Lemma~\ref{lem:almosthookpartitionpprincipal}.
	We conclude that $$\irrprime 2 {\blprinc{2}{\alt n}} \cap \irrprime p {\blprinc{p}{\alt n}}=\{\mathbbm{1}_{\alt n}, \psi^\mu_+, \psi^\mu_-\}$$ using that $\lambda$ is self-conjugate and Remark \ref{rem:pprincSnAn}.
\end{proof}

Finally, we characterize the phenomenon $\irrprime p {\blprinc{p}{\sym n}} \cap \irrprime q {\blprinc{q}{\sym n}}=\{\mathbbm{1}_{\sym n}, \sgn\}$ using the previous lemmas.
In particular, this can only be true if $2\in \{p,q\}$, say $q=2$, %
and $p$ is a Fermat or Mersenne prime. %
For convenience, we require $q<p$ in Theorem~\ref{thm:C}, even though this is inconsistent with the notation used previously in the proofs of Proposition~\ref{pqSn} and Proposition~\ref{pqAn}.

\thmpqlinAnSn

\begin{proof}
	In the proofs of Proposition~\ref{pqSn} and Proposition~\ref{pqAn} we give a partition $\lambda \vdash n$ such that the corresponding irreducible character $\chi^\lambda$ lies in $\irrprime{p}{\blprinc{p}{\sym n}}\cap \irrprime{q}{\blprinc{q}{\sym n}}$ but $\chi^\lambda \notin \{\mathbbm{1}_{\sym n}, \sgn\}$ (i.e., $\lambda \notin \{(1^n),(n)\}$) unless $q=2$ and one of the following hold:
	\begin{itemize}
		\item $n=cp^a=2^k+1$ with positive integers $a\geq 1$, $k\geq 2$, %
		      and $1\leq c<p$, or
		\item $n=2^k=p^a+1$ with positive integers $a\geq 1$, $k\geq 2$.
	\end{itemize}

	First, let $n=cp^a=2^k+1$ with positive integers $a\geq 1$, $k\geq 2$, and $1< c<p$.
	Consider the almost hook partition $\lambda=(1^{n-p^a-2},2,p^a)\vdash n$ which is well-defined as~$c\neq 1$.
	Lemma~\ref{lem:almosthookpartitionpprincipal} (with $\ell=p^a$) says that $\lambda$ is $\{2,p\}$-principal (for $\sym n$) using that $n-1$ is odd and $p$ divides~$n-\ell=(c-1)p^a$. Moreover, the degree $d^\lambda$ is coprime to~$p$ and odd using Lemma~\ref{lem:degalmosthookpprimeor2}~(ii)(a) and~(ii)(b) (with $\ell=p^a$) noticing that $\ell_p=p^a$ and $\ell<2^k$, respectively.
	In total, we get $\irrprime 2 {\blprinc{2}{\sym n}} \cap \irrprime p {\blprinc{p}{\sym n}}\neq \{\mathbbm{1}_{\sym n}, \sgn\}.$

	We are left with the following three cases:
	If $n=p^a=2^k+1$ (i.e., $c=1$ in the previous case), Lemma~\ref{lem:fermatmersenne}~(i) shows that either $a=1$ (in this case $n=p$ is a Fermat prime), or $n=9$ and $p=3$.
	The last case is $n=2^k=p^a+1$ with $k\geq 2$, $a\geq 1$, as before. Lemma~\ref{lem:fermatmersenne}~(ii) shows that we even have $a=1$, i.e., $p=n-1=2^k-1$ is a Mersenne prime.
	Thanks to Example \ref{ex:trivialintersection9} and Lemma~\ref{lem:pqSnfermatprime} (in the first two cases) and Lemma~\ref{lem:pqSnfmersenneprime} (in the Mersenne case), we get $\irrprime p {\blprinc{p}{\sym n}} \cap \irrprime 2 {\blprinc{2}{\sym n}}=\irrprime{\{2,p\}}{\sym n}=\{\mathbbm{1}_{\sym n},\sgn\}$. Moreover, we have
	$\irrprime p {\blprinc{p}{\alt n}} \cap \irrprime 2 {\blprinc{2}{\alt n}}=\{\mathbbm{1}_{\alt n},\psi^\lambda_+, \psi^\lambda_{-}\}$
	where
	$$\lambda=\begin{cases} (1^{4}, 5),                  & n=9,       \\
              (1^{2^{k-1}}, 2^{k-1}+1),    & n=p=2^k+1, \\
              (1^{2^{k-1}-2}, 2, 2^{k-1}), & n=2^k=p+1.\end{cases}$$
	By definition, the irreducible characters $\psi^\lambda_+, \psi^\lambda_{-} \in \irr {\alt n}$ are the irreducible constituents of~$\res{\chi^\lambda}{\alt n}$. Equivalently, they both induce the irreducible character $\chi^\lambda\in \irr {\sym n}$. Setting~$\psi_{+}\coloneqq \psi^\lambda_{+}$ and $\psi_{-}\coloneqq \psi^\lambda_{-}$, the result follows.
\end{proof}

Remark that, by Theorem~\ref{thm:C}, the assertions $\irrprime {\{p,q\}} {\sym n}=\{\mathbbm{1}_{\sym n},\sgn\}$ and \linebreak $\irrprime p {\blprinc{p}{\sym n}} \cap \irrprime q {\blprinc{q}{\sym n}}=\{\mathbbm{1}_{\sym n},\sgn\}$ are equivalent.

\begin{remark} In 2019, Giannelli, Schaeffer Fry, and Vallejo study whether \linebreak $\irrprime {\{p,q\}} G \neq \{\mathbbm{1}_G\}$ for a finite group $G$ and distinct primes $p,q$,
	and show that this is true if and only if $G\neq \{1\}$ \cite[Theorem A]{GSV}.
	A priori, Proposition~\ref{pqSn} and Proposition~\ref{pqAn} imply their result for $G\in \{\sym n, \alt n\}$ and $n\geq 4$.
	However, for both groups and~$n\geq 5$, one can even show that the partition given in the proofs of \linebreak \cite[Theorem 2.8]{GSV} and \cite[Theorem 2.11]{GSV} labeling a non-trivial irreducible character of degree coprime to~$p$ and $q$  %
	is also $\{p,q\}$-principal.
	This gives another, but different, way to prove Proposition~\ref{pqSn} and Proposition~\ref{pqAn}. In contrast to our approach, their main tool is to compare the $p$- and $q$-adic expansion of~$n$. However, this idea does not generalize to projective special linear and unitary groups studied in the next chapter.\newline
	In  \cite[Theorem 2.8]{GSV}, Giannelli--Schaeffer~Fry--Vallejo also characterize tuples~$(n,p,q)$ with $\irrprime{\{p,q\}}{\sym n}=\{\mathbbm{1}_{\sym n},\sgn\}$ for $n\geq 5$, analogously to Theorem~\ref{thm:C}, without taking blocks into the picture.
	Using Lemma~\ref{lem:fermatmersenne} to refine their statement, we see that the lists of possible numbers $(n,p,q)$ in \cite[Theorem 2.8]{GSV} and Theorem~\ref{thm:C} agree.
	Notice that the ``only if'' direction of Theorem~\ref{thm:C} is stronger than the ``only if'' direction of their result. \newline 
	At the time of proving the main results of this section, %
	we were unaware of the cited paper, and we thank the authors for bringing their paper to our attention.
\end{remark}

\section{Classical groups of Lie type} \label{sec:2}
This section is devoted to the proof of Theorem A for finite (simple) classical groups of type~$\type{A}$, $\type{B}$ or $\type{C}$.
\subsection{Background}\label{sec:classicaltypes}
Here, we recollect some facts about groups of Lie type and their unipotent characters that we use throughout the rest of this article. For the statements recalled here, see the references \cite{MT}, \cite{GM}, and \cite{CE}, unless specified otherwise.

Let $\GG$ be a simple algebraic group of simply connected type, defined over $\overline{\F_{\QQ}}$ where $\QQ$ is a prime power. Consider a Frobenius endomorphism $F\colon \GG \to \GG$ endowing $\GG$ with an $\F_{\QQ}$-structure. Then, the group $G\coloneqq \GG^F$ is finite and, up to finitely many exceptions, the quotient $S\coloneqq G/\zent G$ of $G$ by its center $\zent G$ is a finite simple group \emph{of Lie type}. In fact, all finite simple groups of Lie type arise in this way (if we more generally allow $F$ to be a Steinberg endomorphism). %

By Lusztig's fundamental work, the set of irreducible characters of $G$ admits a partition $$\irr G=\bigcup_{s}\mathcal{E}(G,s)$$ into so-called \emph{Lusztig series} $\mathcal{E}(G,s)$, where $s$ runs through the semisimple elements of the dual finite group up to conjugation. The \emph{unipotent characters} are the irreducible characters in the distinguished set $\Uch(G)\coloneqq \mathcal{E}(G,1)$, e.g., the trivial character $\mathbbm{1}_G$ is unipotent. \newline
Since any unipotent character $\chi$ has the center $\zent G$ in its kernel, its deflates to an irreducible character $\bar \chi$ of $S$, and the corresponding map $\Uch(G) \to \irr S$, $\chi\mapsto \bar \chi$ is degree-preserving. Moreover, by a general group-theoretic argument, given a non-defining prime $p$ (i.e., $p$ not diving $\QQ$), two unipotent characters $\chi$, $\psi\in \Uch(G)$ lie in the same $p$-block of $G$ if and only if their deflations $\bar \chi, \bar \psi \in \irr S$ lie in the same $p$-block of $S$ \linebreak \cite[Lemma~17.2]{CE}.
\newline
Note that any unipotent character lies in \emph{the unipotent $p$-block} $$\mathcal{E}_{p}(G,1)\coloneqq \bigcup_{t} \mathcal{E}(G,t)$$ where $t$ runs through the semisimple $p$-elements of the dual group. %
The unipotent $p$-block is itself a union of $p$-blocks and each of them contains at least one unipotent character (whence the name) \cite[Theorem~9.12]{CE}).%

Coming back to our problem, we are interested in the set $\irrprime p {\blprinc p G} \cap \irrprime q {\blprinc q G}$ for two distinct non-defining primes $p$ and $q$. By the above compatibility of Lusztig series and (unipotent) blocks, we have $$\irr {\blprinc{p}{G}} \cap \irr{\blprinc{q}{G}} \subseteq \mathcal{E}_{p}(G,1) \cap \mathcal{E}_{q}(G,1)=\Uch(G)$$
(since any irreducible character in $\mathcal{E}_{p}(G,1)\cap \mathcal{E}_{q}(G,1)$ lies in an intersection $\mathcal{E}(G,x)\cap \mathcal{E}(G,y)$ of Lusztig series labeled by a $p$-element $x$ and a $q$-element $y$, thus $x=y=1$).

From now on, let $G$ be of classical type $\type{A}_{n-1}$, $\tw{2}{\type{A}_{n-1}}$, $\type{B}_n$ or $\type{C}_n$ (note that we do not treat the $\type{D}$-types in this paper). For a classical group $G$, unipotent characters are labeled by combinatorial objects (partitions or related objects called \emph{symbols}) which also determine the character degrees. %
They also encode whether two unipotent characters belong to the same $p$-block, for a non-defining prime $p$.
Note that any unipotent character of $G$ lies in the principal $2$-block, given an odd prime power $\QQ$, by \cite[Theorem 13]{CE93}.%

Now, the strategy to prove Theorem~\ref{thm:A} for $S$ is to show \linebreak $\irrprime p {\blprinc p G} \cap \irrprime q {\blprinc q G}\neq \{\mathbbm{1}_G$\} for any two non-defining distinct primes $p$, $q$ dividing~$\abs G$ (equivalently dividing $\abs S$) and, by what we said above, deduce that the same is true for $S$.

The aim of the next section is to develop tools tailored to the degree analysis of unipotent character degrees. %
The third section develops the number-theoretical framework used in the proof of Theorem~\ref{thm:A} for classical groups.
Both Section~\ref{sec:degreeanalysis} and Section~\ref{eppadicsetup} are of purely number-theoretical nature. In Section~\ref{sec:prooftypeA} and Section~\ref{sec:prooftypesBC}, we eventually come back to our setting of classical groups where we deal with $G$ of type~$\type{A}$ and types $\type{B}$ or $\type{C}$, respectively.
Remark that each type of group requires a separate (but often similar) case distinction.

\subsection{\texorpdfstring{$p$}{p}-parts of \texorpdfstring{$x^m\pm 1$}{xᵐ±1}} \label{sec:degreeanalysis}
In this section, let $p$ be a prime, $x$ an integer coprime to~$p$ and fix a positive integer $f\geq 1$.
For any positive integer $m\geq 1$ denote the order of~$x$ modulo~$m$ by~$\ord_m(x)$,
and the polynomials $\Psi_m^{-}\coloneqq X^m-1\in \Q[X]$ and $\Psi_m^{+}\coloneqq X^m+1\in \Q[X].$  %
Recall the factorization $\Psi_m^{-}=X^m-1=\prod_{d\mid m} \Phi_d \in \Q[X]$, where $\Phi_d$ is the \emph{$d$th cyclotomic polynomial} over $\Q$.

The goal of this section is to give both necessary and sufficient conditions for divisibility of the polynomial values $\Psi_f^{\pm}(x)$ by~$p$, and in the divisibility case compare $p$-parts of~$\Psi_f^{\pm}(x)$ with $p$-parts of~$\Psi_{f'}^{\pm}(x)$, for $f'\geq 1$. %
Most of the work concerns the case of an odd prime $p$.

In the first two lemmas, we give a well-known elementary condition when the polynomial values $\Psi_f^{\pm}(x)$ (resp. $\Phi_f(x)$) are divisible by~$p$.

\begin{lemma} \label{lem:divpsi}
	\begin{enumerate}[\rm(i)]
		\item If $p$ divides~$\Psi_f^{-}(x)$ \textup(resp. $\Psi_f^{+}(x)$\textup), then $\ord_p(x)$ \textup(resp. $\ord_p(x^2)$\textup) divides~$f$.

		      In both cases,  $\ord_p(x^2)$ divides~$f$.

		\item The prime $p$ divides~$\Psi_{2f}^{-}(x)$ if and only if $\ord_p(x^2)$ divides~$f$. %
	\end{enumerate}
\end{lemma}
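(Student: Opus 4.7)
The plan is to reduce both parts to elementary order computations modulo $p$, using that $x$ is coprime to $p$ so that $\ord_p(x)$ and $\ord_p(x^2)$ are well-defined.

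For part~(i), I would first treat $\Psi_f^{-}$: by definition $p \mid \Psi_f^{-}(x) = x^f - 1$ is equivalent to $x^f \equiv 1 \pmod p$, i.e., to $\ord_p(x) \mid f$; squaring then gives $(x^2)^f \equiv 1 \pmod p$, hence also $\ord_p(x^2) \mid f$. For $\Psi_f^{+}$, the hypothesis $p \mid x^f + 1$ yields $p \mid (x^f-1)(x^f+1) = x^{2f} - 1$, so $(x^2)^f \equiv 1 \pmod p$ and $\ord_p(x^2) \mid f$, which is the desired conclusion. Combining both cases settles the ``in both cases'' addendum.

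Part~(ii) is then immediate by applying the $\Psi_f^{-}$ case of~(i) with $x^2$ in place of $x$: indeed $\Psi_{2f}^{-}(x) = x^{2f} - 1 = (x^2)^f - 1$, so $p \mid \Psi_{2f}^{-}(x)$ if and only if $\ord_p(x^2) \mid f$. Note that $x^2$ is coprime to $p$ because $x$ is, so invoking (i) is legitimate.

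The only point that deserves a moment of care is the prime $p = 2$ in the $\Psi_f^{+}$ analysis, since there $-1 \equiv 1 \pmod 2$ collapses $\Psi_f^{+}$ and $\Psi_f^{-}$ to the same residue class; but in that case $x$ is odd, so $x^2 \equiv 1 \pmod 2$ and $\ord_2(x^2) = 1$ divides every $f$, making the statement hold trivially. Beyond this remark, each step is a one-line congruence manipulation, so I do not anticipate any genuine obstacle; the real work of this section will come in the subsequent lemmas comparing $p$-parts of $\Psi_f^{\pm}(x)$ and $\Psi_{f'}^{\pm}(x)$, for which the present lemma is merely the entry point.
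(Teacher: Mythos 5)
Your proof is correct and takes essentially the same route as the paper, reducing the $\Psi_f^{+}$ case to the factorization $\Psi_f^{-}(x)\Psi_f^{+}(x)=\Psi_{2f}^{-}(x)=\Psi_f^{-}(x^2)$ and reading off divisibility from the definition of multiplicative order. The paper merely spells out the trivial converse direction of~(ii) (namely, $\ord_p(x^2)\mid f$ implies $(x^2)^f\equiv 1\bmod p$), which you fold into the standard biconditional for orders; your remark on $p=2$ is a harmless extra observation not present in the paper.
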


\begin{proof}
	(i)	If $p$ divides~$\Psi_f^{-}(x)$, then we have $x^f\equiv 1\bmod p$, hence $\ord_p(x)$ divides~$f$. \\
	If $p$ divides~$\Psi_f^{+}(x)$ or $\Psi_f^{-}(x)$, using $\Psi_f^{-}(x)\Psi_f^{+}(x)=\Psi_{2f}^{-}(x)=\Psi_{f}^{-}(x^2)$, we get $\ord_p(x^2)$ divides~$f$ by the same argument as before.

	(ii) %
	The ``only if'' direction follows directly from~(i), again using $\Psi_{2f}^{-}(x)=\Psi_f^{-}(x^2)$.
	If~$\ord_p(x^2)$ divides~$f$, there is an integer $m\geq 1$ with $f=\ord_p(x^2)m$, and we have \linebreak $\Psi_{2f}^{-}(x)=x^{2\ord_p(x^2)m}-1\equiv 0\bmod p$, so we are done.
\end{proof}
The first part of the next lemma says that
$p$ divides the value $\Phi_f(x)$ of the $d$th cyclotomic polynomial evaluated at $x$ (coprime to~$p$) if and only if $f$ has $p'$-part $f_{p'}=\ord_p(x)$. Then we compute $p$-parts of~$\Phi_f(x)$ for such $f$.

\begin{lemma}  \label{lem:cyclo}
	\begin{enumerate}[\rm(i)]
		\item  \cite[Lemma 5.2]{MalleH0}
		      The prime $p$ divides~$\Phi_f(x)$ if and only if \linebreak $f=p^k \ord_p(x)$ for some non-negative integer $k\geq 0$.%
		\item \cite[Lemma 5.2]{MalleH0}
		      If $p^2$ divides~$\Phi_f(x)$ and $p$ is odd, then $f=\ord_p(x)$. Moreover, if $p=2$, then $4$ divides~$\Phi_f(x)$ if and only if $f=\ord_4(x)$.

		\item If $p$ is odd, $\Phi_{p^k\ord_p(x)}(x)$ has $p$-part $(\Phi_{p^k\ord_p(x)}(x))_p=p$ for any $k\geq 1$.
		\item %
		      The value $\Phi_{2^k}(x)$ has $2$-part $(\Phi_{2^k}(x))_2=2$, for any $k\geq 2$, and $\Phi_1(x)\Phi_2(x)$ has $2$-part $2^3$.
	\end{enumerate}
\end{lemma}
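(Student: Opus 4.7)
The plan is to take parts (i) and (ii) for free---they are quoted from \cite[Lemma 5.2]{MalleH0}---and to derive (iii) and (iv) from them by combining the Lifting the Exponent (LTE) lemma with the cyclotomic factorization $x^n - 1 = \prod_{d \mid n}\Phi_d(x)$. Throughout, I write $v_p$ for the $p$-adic valuation.

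For (iii), I would set $e = \ord_p(x)$. Since $p$ is odd, divides $x^e - 1$, and is coprime to $x$, LTE gives
\[
v_p(x^{ep^k} - 1) = v_p(x^e - 1) + k
\]
for every $k \geq 0$. By part (i), among the divisors of $ep^k$ only those of the form $ep^j$ with $0 \leq j \leq k$ contribute to $v_p(x^{ep^k} - 1)$; applied similarly to $x^e - 1$ this also gives $v_p(x^e - 1) = v_p(\Phi_e(x))$. The displayed equation therefore collapses to
\[
\sum_{j=1}^{k} v_p\bigl(\Phi_{ep^j}(x)\bigr) = k.
\]
Each summand is at least $1$ by (i) and at most $1$ by (ii), so each equals $1$, which is exactly (iii).

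For (iv) with $k \geq 2$ the argument is the same using the $p = 2$ form of LTE: for odd $x$ one has $v_2(x^{2^k} - 1) = v_2(x^2 - 1) + k - 1$ for all $k \geq 1$. Peeling off the $j = 0, 1$ terms (which together contribute exactly $v_2(x^2 - 1)$) leaves $\sum_{j=2}^{k} v_2\bigl(\Phi_{2^j}(x)\bigr) = k - 1$, and the same sandwich between (i) and (ii) forces each summand to be $1$. The claim about $\Phi_1(x)\Phi_2(x) = x^2 - 1$ then reduces to the elementary fact that $x^2 \equiv 1 \pmod 8$ for any odd $x$, so $v_2(x^2 - 1) \geq 3$.

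The only real work beyond citing (i) and (ii) is setting up the right instance of LTE and checking that the $\Phi_d(x)$ contributing $p$-adically in each factorization are exactly those singled out by (i); the rest is a telescoping argument pinched between the one-line bounds from (i) and (ii). I expect no genuine obstacle here, only a small amount of bookkeeping to distinguish $p$ odd from $p = 2$ since the LTE formula takes slightly different shapes in the two cases.
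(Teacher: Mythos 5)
For parts (iii) and the first claim of (iv), your argument is correct, but the LTE identity and the telescoping over $x^n-1=\prod_{d\mid n}\Phi_d(x)$ are doing no real work: the ``sandwich'' you already invoke is a complete one-line proof on its own. By (i) the prime $p$ divides $\Phi_{p^k\ord_p(x)}(x)$, and by (ii), since $p^k\ord_p(x)\neq\ord_p(x)$ for $k\geq 1$ (resp.\ $2^k\neq\ord_4(x)$ for $k\geq 2$), the power $p^2$ (resp.\ $4$) does not; hence $(\Phi_{p^k\ord_p(x)}(x))_p=p$. This is exactly the paper's proof of (iii). For $(\Phi_{2^k}(x))_2=2$ the paper instead computes directly that $\Phi_{2^k}(x)=x^{2^{k-1}}+1\equiv 2\bmod 4$ because odd squares are congruent to $1$ modulo~$4$. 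Either route works; yours is just a longer road to the same sandwich, and LTE never appears in the paper.

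For the second claim of (iv) your argument has a genuine gap, but it also exposes a defect in the lemma itself. You establish only the lower bound $(\Phi_1(x)\Phi_2(x))_2\geq 2^3$, whereas the lemma asserts equality. The asserted equality is in fact false: for $x=7$ one has $\Phi_1(7)\Phi_2(7)=48$ with $2$-part $2^4$, and for $x=9$ one gets $80$, again with $2$-part $2^4$. So your inequality is the best one can say, and the paper's own case analysis on $\ord_4(x)$ (claiming $(\Phi_1(x))_2=2^{3-\ord_4(x)}$ and $(\Phi_2(x))_2=2^{\ord_4(x)}$) is likewise incorrect---these are only lower bounds. The error is harmless downstream, since in Corollary~\ref{coro:psipparts} only the dependence of $(\Psi^-_n(x))_2$ on $n$ matters and $(\Phi_1(x)\Phi_2(x))_2$ is a constant in $n$, but you should be aware that a proof matching the stated equality cannot exist, and that the $a=0$ versus $a\neq 0$ comparison in that corollary's proof quietly relies on the same false bound $(\Phi_1(x))_2\leq 2^2$.
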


\begin{proof}
	If $p$ divides~$\Phi_f(x)$, then $p$ divides~$\Psi_f^{-}(x)$, hence $\ord_p(x)$ divides~$f$ by Lemma~\ref{lem:divpsi}~(i).
	The rest of the proof of~(i) and~(ii) can be found in \cite[Lemma~5.2]{MalleH0}.
	The assertion in~(iii) follows directly from~(i) and~(ii).

	(iv) Assume now that $p=2$. Recall that $\Phi_1(x)=x-1$ and $\Phi_{2^k}(x)=x^{2^{k-1}}+1$ for any~$k\geq 1$. Since square numbers are always congruent to~$1$ modulo $4$, the $2$-part of~$\Phi_{2^k}(x)$, for any $k\geq 2$, is $(\Phi_{2^k}(x))_2=2$.
	If $\ord_4(x)=1$ (i.e., $x\equiv 1 \bmod 4$), then we have $(\Phi_1(x))_2=4$ and $(\Phi_2(x))_2=2$, while for $\ord_4(x)=2$ (i.e., $x\equiv 3 \bmod 4$), we have $(\Phi_2(x))_2=4$ and $(\Phi_1(x))_2=2$. In total, we get $(\Phi_1(x))_2=2^{3-\ord_4(x)}$ and $(\Phi_2(x))_2=2^{\ord_4(x)}$, thus $(\Phi_1(x)\Phi_2(x))_2=2^3$. This also reproves~(ii) for $p=2$.
\end{proof}

As a corollary, we decide whether the $p$-parts of~$\Psi_n^{\pm}(x)$ and $\Psi_m^{\pm}(x)$ are equal, given positive integers $n,m\geq 1$ divisible by~$\ord_p(x)$ (later in this section weakened to~$\ord_p(x^2)$), using knowledge on the $p$-parts $n_p$ and~$m_p$.

\begin{corollary} \label{coro:psipparts}
	Let $m,n\geq 1$ be positive integers such that \fbox{$\ord_p(x)$} divides both $n$ and~$m$.
	\begin{enumerate}[\rm(i)]
		\item The $p$-parts of~$\Psi_m^{-}(x)$ and $\Psi_n^{-}(x)$ are equal if and only if $m_p=n_p$.
		\item If $m_p=n_p$, then the $p$-part of~$\Psi_m^{+}(x)$ equals the $p$-part of~$\Psi_n^{+}(x)$. For odd $p$ dividing~$\Psi^{+}_m(x)$, the converse also holds.
	\end{enumerate}
\end{corollary}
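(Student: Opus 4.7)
The plan is to reduce everything to the cyclotomic factorisation $\Psi_m^{-}(x) = \prod_{d \mid m} \Phi_d(x)$ and to invoke Lemma~\ref{lem:cyclo}. Setting $e \coloneqq \ord_p(x)$, the hypothesis says $e \mid m$ and $e \mid n$, and since $e$ is coprime to $p$, Lemma~\ref{lem:cyclo}(i) tells us that the only divisors $d \mid m$ with $p \mid \Phi_d(x)$ are exactly $e, pe, p^2 e, \ldots, p^{\alpha} e$, where $p^{\alpha} = m_p$.

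For part~(i), I would multiply the $p$-parts of these finitely many cyclotomic factors: by Lemma~\ref{lem:cyclo}(iii), when $p$ is odd each layer with $k \geq 1$ contributes exactly one factor of $p$, so one obtains the closed expression
\begin{equation*}
(\Psi_m^{-}(x))_p \;=\; (\Phi_e(x))_p \cdot m_p .
\end{equation*}
For $p = 2$ (where $e = 1$) the analogous computation uses Lemma~\ref{lem:cyclo}(iv) (or, equivalently, the lifting-the-exponent formula) to treat the pair $\Phi_1 \Phi_2$ as a base block and each further $\Phi_{2^k}$ with $k \geq 2$ as a single factor of $2$. In every case $(\Psi_m^{-}(x))_p$ depends on $m$ only through $m_p$ and is strictly monotone in $m_p$, from which the \emph{if and only if} in~(i) follows. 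The point to be careful about is the $p = 2$ crossover between odd and even $m$, which must be handled separately to ensure that no coincidental equality can occur between an odd-$m$ and an even-$m$ value; strictness there comes from the extra factor $(x+1)_2 \geq 2$ contributed to $\Psi_m^{-}(x)$ as soon as $m$ becomes even.

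For part~(ii) I would use the identity $\Psi_m^{+}(x)\,\Psi_m^{-}(x) = \Psi_{2m}^{-}(x)$, which gives $(\Psi_m^{+}(x))_p = (\Psi_{2m}^{-}(x))_p / (\Psi_m^{-}(x))_p$. The hypothesis $e \mid m$ forces $e \mid 2m$, so~(i) applies to both factors; assuming $m_p = n_p$ yields $(2m)_p = (2n)_p$ as well, and the forward implication follows. For the converse under odd $p$ with $p \mid \Psi_m^{+}(x)$, the divisibility $p \mid x^m + 1$ gives $x^m \equiv -1 \pmod p$, hence $e$ must be even with $e/2 \mid m$ and $e \nmid m$; this is incompatible with the boxed hypothesis $e \mid m$, so under that hypothesis the converse is vacuous for odd $p$. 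In the intended non-vacuous application, one relaxes the assumption to $\ord_p(x^2) \mid m, n$ (cf.\ Lemma~\ref{lem:divpsi}), and then $(\Psi_m^{+}(x))_p = (\Psi_{2m}^{-}(x))_p$, so applying~(i) to $2m$ and $2n$ yields the equivalence. The main obstacle is not computational but a bookkeeping one: stating the $p = 2$ formula in~(i) cleanly and keeping the scope of the converse in~(ii) coherent with the hypotheses.
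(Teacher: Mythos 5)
Your proposal is correct and runs along the same lines as the paper: both extract the $p$\nobreakdash-part of $\Psi_n^{-}(x)=\prod_{d\mid n}\Phi_d(x)$ via Lemma~\ref{lem:cyclo}, arriving at the closed form $(\Psi_n^{-}(x))_p=(\Phi_{\ord_p(x)}(x))_p\cdot n_p$ for odd $p$, and both handle~(ii) through the identity $\Psi_m^{+}\Psi_m^{-}=\Psi_{2m}^{-}$. Two points in your write-up are worth flagging as genuine refinements rather than mere restatements.

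First, your observation that the converse in~(ii) is \emph{vacuous} under the boxed hypothesis is sharper than what the paper records: if $\ord_p(x)\mid m$ then $x^m\equiv 1\pmod p$, so $\Psi_m^{+}(x)\equiv 2\pmod p$ and an odd $p$ cannot divide it. The paper instead gives a direct (and internally consistent) derivation of $m_p=n_p$ from $1\neq(\Psi_m^{+}(x))_p=(\Psi_n^{+}(x))_p$, without remarking that the premise is unreachable when $\ord_p(x)\mid m$; your reading that the non-vacuous form of the converse lives under the weaker hypothesis $\ord_p(x^2)\mid m,n$ (which is exactly how Corollary~\ref{coro:psieppparts} later uses it) is the right way to think about the scope. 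Second, for $p=2$ your strictness argument at the odd/even crossover --- namely that passing from $m_2=1$ to $m_2\geq 2$ picks up at least the extra factor $(x+1)_2\geq 2$ --- is more robust than the paper's route, which uses the exact value $(\Phi_1(x)\Phi_2(x))_2=2^3$ from Lemma~\ref{lem:cyclo}(iv) and the inequality $(\Phi_1(x))_2\leq 2^2$. Both of those hold only for $x\equiv \pm 3\pmod 8$ (for instance $x=17$ gives $(\Phi_1(x))_2=16$ and $(\Phi_1(x)\Phi_2(x))_2=2^5$); the corollary remains true because one only needs $(\Psi_m^{-}(x))_2$ to be a strictly increasing function of $m_2$, which your $(x+1)_2\geq 2$ argument (or the lifting-the-exponent form $(\Psi_m^{-}(x))_2=(x^2-1)_2\cdot m_2/2$ for even $m$) delivers without committing to the exact constant. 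So your version is correct and, in the $p=2$ corner, a safer formulation of the same idea.
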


\begin{proof}
	Let $n_p=p^a$ with $a\geq 0$. To determine the $p$-parts of~$\Psi_n(x)=\prod_{d\mid n}\Phi_d(x)$ we have to take into account only the $p$-parts of the factors $\Phi_d(x)$ where $d$ is of the form $d=\ord_p(x)p^k$ with $0\leq k\leq a$ by Lemma~\ref{lem:cyclo}~(i).

	First assume that $p$ is odd.
	Setting $d\coloneqq (\Phi_{\ord_p(x)}(x))_p$ and applying Lemma~\ref{lem:cyclo}~(iii),
	we see that the $p$-part of~$\Psi_n^{-}(x)$ is
	\begin{align*}\tag{a}\label{eq:oddp}
		(\Psi_n^{-}(x))_p
		=\prod_{k=0}^a (\Phi_{p^k \ord_p(x)}(x))_p=d \cdot \prod_{k=1}^a (\Phi_{p^k \ord_p(x)}(x))_p
		= d \cdot \prod_{k=1}^a p
		=p^a \cdot d
	\end{align*}
	since the values~$\Phi_{p^k \ord_p(x)}(x)$ all have $p$-part $p$, for running $k\geq 1$.

	For $p=2$, similarly to before, using Lemma~\ref{lem:cyclo}~(i), we have $(\Psi_n^{-}(x))_2=\prod_{k=0}^a (\Phi_{2^k}(x))_2$, thus $(\Psi_n^{-}(x))_2=(\Phi_1(x))_2$ if $a=0$. Otherwise, we have $a\neq 0$. Setting $d'\coloneqq (\Phi_1(x)\Phi_2(x))_2$, Lemma~\ref{lem:cyclo}~(iv) gives
	\begin{align*}\tag{b}\label{eq:evenp}
		(\Psi_n^{-}(x))_2
		=\prod_{k=0}^a (\Phi_{2^k}(x))_2
		=d' \cdot \prod_{k=2}^{a} (\Phi_{2^k}(x))_2
		=2^3 \cdot \prod_{k=2}^{a} 2
		=2^3 \cdot 2^{a-1}=2^{a+2}.
	\end{align*}

	(i) First, assume that $m_p=n_p=p^a$. Now, $(\Psi_n^{-}(x))_p=(\Psi_m^{-}(x))_p$ is immediate from Equation~\eqref{eq:oddp} and Equation \eqref{eq:evenp}.

	Conversely, assume that $(\Psi_n^{-}(x))_p=(\Psi_m^{-}(x))_p$.  Let $n_p=p^a$ as before, and $m_p=p^b$ with $b\geq 0$. For odd $p$, Equation~\eqref{eq:oddp} yields $(\Psi_n^{-}(x))_p= p^a \cdot (\Phi_{\ord_p(x)}(x))_p$, and we get $(\Psi_m^{-}(x))_p=p^b \cdot (\Phi_{\ord_p(x)}(x))_p$. Hence, $a=b$ follows.\\
	For $p=2$ and $a\neq 0$, the reasoning is similar using Equation~\eqref{eq:evenp} and the same conclusion as before holds. Note that if $a=0$ and we assume $b\neq 0$, then we get the contradiction $2^2<2^{b+2}=(\Psi_m^{-}(x))_2=(\Psi_n^{-}(x))_2=(\Phi_1(x))_2\leq 2^2$. Hence, $a=b$ holds in all cases, i.e., the integers $n$ and $m$ have the same $p$-part.

	(ii)  Assuming that $m_p=n_p$,~(i) gives $(\Psi_n^{-}(x))_p=(\Psi_m^{-}(x))_p$ and $(\Psi_{2n}^{-}(x))_p=(\Psi_{2m}^{-}(x))_p$ (since we have $(2n)_p=(2m)_p$). The assertion $(\Psi_n^{+}(x))_p=(\Psi_m^{+}(x))_p$ follows directly using the factorization $\Psi_{2f}^{-}(x)=\Psi_f^{-}(x) \Psi_f^{+}(x)$ for $f\in\{n,m\}$.

	Conversely, if $1\neq (\Psi_n^{+}(x))_p=(\Psi_m^{+}(x))_p$ and $p$ is odd, %
	then $p$ divides neither $ \Psi_n^{-}(x)$ nor $\Psi_m^{-}(x)$ (otherwise $-1\equiv x^n\equiv 1 \bmod p$, contradicting $p\neq 2$). Using the same trick as before, we get $(\Psi_{2n}^{-}(x))_p=(\Psi_{2m}^{-}(x))_p$, i.e., $(2n)_p=(2m)_p$ by~(i), hence $n_p=m_p$ follows.
\end{proof}

Be aware that, if $p$ does not divide~$\Psi_n^{+}(x)$ nor $\Psi_m^{+}(x)$, i.e., $1=(\Psi_n^{+}(x))_p=(\Psi_m^{+}(x))_p$, %
a priori, we cannot say anything about the relation of the $p$-parts $n_p$ and $m_p$.
\begin{remark}
	Note that for $p=2$ the assumption of the corollary is always fulfilled (as~$\ord_2(x)=1$).\\
	When we want to apply Corollary~\ref{coro:psipparts}, we might be given $\ord_p(x^2)$ (see below) instead of~$\ord_p(x)$. However, note that for any $n\geq 1$, the assumption $\ord_p(x)$ dividing~$2n$ is equivalent to~$\ord_p(x^2)$ dividing~$n$ using $\ord_p(x^2)=\frac{\ord_p(x)}{\gcd(\ord_p(x),2)}$.
\end{remark}
The next goal is to give sufficient conditions guaranteeing divisibility of the polynomial values $\Psi_f^{\pm}(x)$ by~$p$ whenever $p$ is odd. Here, $\ord_p(x^2)$ comes into the picture. We finally decide whether the $p$-parts of~$\Psi\in \{\Psi^{+}_{f}, \Psi_f^{-}\}$ and $\Psi'\in \{\Psi_{f'}^{+}, \Psi_{f'}^{-}\}$ are equal, for $f'\geq 1$ and $f,f'$ both divisible by~$\ord_p(x^2)$.

Recall the following observations we used so far: Whenever $p$ divides~$\Psi_f^{-}(x)$ or $\Psi_f^{+}(x)$, we automatically get that $p$ divides the product $\Psi_f^{-}(x)\Psi^{+}_f(x)=\Psi_{2f}^{-}(x)=\Psi_f(x^2)$, hence $\ord_p(x^2)$ divides~$f$, as seen in Lemma~\ref{lem:divpsi}.
Conversely, if we assume that $p$ is odd and $\ord_p(x^2)$ divides~$f$, then $p$ divides either $\Psi_f^{-}(x)$ or $\Psi_f^{+}(x)$,
but not both, otherwise we get the contradiction $1\equiv x^f \equiv -1\bmod p$.

\begin{notation} For the rest of the section, the prime $p$ is assumed to be odd.
\end{notation}

Note that $p$ divides~$\Psi_{\ord_p(x^2)}^{-}(x^2)=\Psi_{2\ord_p(x^2)}^{-}(x)=\Psi_{2f}^{-}(x)$, so $p$ divides either $\Psi_{\ord_p(x^2)}^{-}(x)$ or $\Psi_{\ord_p(x^2)}^{+}(x)$ by what we said previously. %
This leads to the following notion which is due to Fong--Srinivasan \cite[5.]{FSclassicalgroups} who defined it in the context of~$x$ being a prime power, say $\QQ$, in the setting of classical groups.

\begin{definition} \label{def:FSunitarylinear}
	Let $p$ be odd. %
	We call $p$ \emph{linear for $x$} if $p$ divides~$\Psi_{\ord_p(x^2)}^{-}(x)$ and \emph{unitary for $x$} if $p$ divides~$\Psi_{\ord_p(x^2)}^{+}(x)$. For short, we might simply say that $p$ is linear or unitary.
\end{definition}

\begin{remark} We rephrase the definition of linear and unitary primes in terms of~$\ord_p(x)$.
	Note that
	$\ord_p(x^2)=\frac{\ord_p(x)}{\gcd(\ord_p(x),2)},$
	thus the prime $p$ is linear or unitary for $x$ if and only if $\ord_p(x)$ is odd or even, respectively. Equivalently, $p$ is linear (resp. unitary) if and only if $\ord_p(x)=\ord_p(x^2)$ (resp.$\ord_p(x)=2 \ord_p(x^2)$).
\end{remark}

Now, we state necessary and sufficient criteria deciding whether $p$ divides~$\Psi_f^{\pm}(x)$ bringing $\ord_p(x^2)$ to the picture.

\begin{lemma} %
	\label{lem:divpsiep}
	Let $p$ be odd.
	\begin{enumerate}[\rm(i)]
		\item %
		      Suppose that $p$ is linear for $x$.
		      Then, $p$ does not divide~$\Psi_f^{+}(x)$, and $p$ divides~$\Psi_f^{-}(x)$ if and only if $\ord_p(x^2)$ divides~$f$.
		\item %
		      Suppose that $p$ is unitary for $x$.
		      Then, $p$ divides~$\Psi_f^{-}(x)$ or $\Psi_f^{+}(x)$ if and only if $f$ is an even or odd multiple of~$\ord_p(x^2)$, respectively.

	\end{enumerate}
\end{lemma}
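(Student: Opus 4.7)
My plan is to reduce both parts to the identity $\Psi_{2f}^{-}(x)=\Psi_{f}^{-}(x)\,\Psi_{f}^{+}(x)$ together with the remark preceding the lemma: since $p$ is odd, $p$ cannot divide both factors simultaneously (else $1\equiv x^f\equiv -1\bmod p$). So for each $f$ with $\ord_p(x^2)\mid f$, exactly one of $\Psi_f^{\pm}(x)$ is divisible by $p$, and the task reduces to deciding which one depending on the parity of $f/\ord_p(x^2)$.

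For part (i), I use the reformulation that $p$ linear for $x$ means $\ord_p(x)=\ord_p(x^2)$ is odd. Then $p\mid\Psi_f^{-}(x)$ is equivalent to $\ord_p(x)\mid f$, i.e., $\ord_p(x^2)\mid f$. For the non-divisibility of $\Psi_f^{+}(x)$, I argue by contradiction: if $p\mid\Psi_f^{+}(x)$, then $x^f\equiv -1\bmod p$, so $x^{2f}\equiv 1$ and $\ord_p(x)\mid 2f$; since $\ord_p(x)$ is odd this forces $\ord_p(x)\mid f$, hence $x^f\equiv 1$, contradicting $p\neq 2$.

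For part (ii), I use that $p$ unitary for $x$ means $\ord_p(x)=2\,\ord_p(x^2)$, and in particular $y\coloneqq x^{\ord_p(x^2)}$ satisfies $y^2\equiv 1\bmod p$ but $y\not\equiv 1\bmod p$, so $y\equiv -1\bmod p$. Writing any $f$ with $\ord_p(x^2)\mid f$ as $f=k\,\ord_p(x^2)$ for some $k\geq 1$, this gives $x^f\equiv (-1)^k\bmod p$, so $p\mid\Psi_f^{-}(x)$ iff $k$ is even, and $p\mid\Psi_f^{+}(x)$ iff $k$ is odd. Conversely, if $p$ divides either $\Psi_f^{-}(x)$ or $\Psi_f^{+}(x)$, then $p\mid\Psi_{2f}^{-}(x)=\Psi_f^{-}(x^2)$, so $\ord_p(x^2)\mid f$ by Lemma~\ref{lem:divpsi}(ii), which puts us in the previous situation.

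No step looks like a real obstacle here; the only subtlety is the clean passage between $\ord_p(x)$ and $\ord_p(x^2)$ via the linear/unitary dichotomy, which is already encoded in the remark right after Definition~\ref{def:FSunitarylinear}. The whole argument is purely a parity computation on exponents together with the elementary identity $\Psi_{2f}^{-}=\Psi_f^{-}\Psi_f^{+}$.
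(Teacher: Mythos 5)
Your proof is correct and follows essentially the same route as the paper: both rely on the factorization $\Psi_{2f}^{-}=\Psi_f^{-}\Psi_f^{+}$, the observation that an odd $p$ cannot divide both factors, and a reduction via Lemma~\ref{lem:divpsi} to the condition $\ord_p(x^2)\mid f$. Your treatment of part (ii) is a mild streamlining — noting that $y\coloneqq x^{\ord_p(x^2)}\equiv -1 \bmod p$ in the unitary case, so $x^f\equiv(-1)^k$, settles both parity cases at once — whereas the paper handles the even and odd directions separately, but the underlying idea is the same.
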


\begin{proof}

	(i) Since~$p$ is linear, we have $\ord_p(x)=\ord_p(x^2)$.
	Note first that $p$ does not divide~$\Psi_f^{+}(x)=x^f+1$, otherwise $\ord_p(x^2)=\ord_p(x)$ divides~$f$ by Lemma~\ref{lem:divpsi}~(i), in particular $p$ divides $\Psi_f^{-}(x)$. In other words, $p$ divides both $\Psi_f^{+}(x)$ and $\Psi_f^{-}(x)$, contradicting $p$ being odd.

	If $p$ divides~$\Psi_f^{-}(x)$, then $\ord_p(x^2)$ divides~$f$ by Lemma~\ref{lem:divpsi}~(ii).
	Conversely, if $\ord_p(x^2)$ divides~$f$, i.e., $p$ divides~$\Psi_{f}^{-}(x^2)=\Psi_{2f}^{-}(x)=\Psi_f^{-}\Psi_f^{+}$, then $p$ must divide~$\Psi_f^{-}(x)$ since~$p$ does not divide~$\Psi_f^{+}(x)$ as $p$ is odd.

	(ii) As $p$ is unitary, we have $\ord_p(x)=2\ord_p(x^2)$.
	Without loss of generality suppose that $p$ divides either $\Psi_f^{-}(x)$ or $\Psi_f^{+}(x)$, i.e., $p$ divides~$\Psi_{2f}^{-}(x)$, which is equivalent to~$\ord_p(x^2)$ dividing~$f$ by Lemma~\ref{lem:divpsi}~(ii). In the opposite case, the assertion in~(ii) is trivial, again by the same lemma. Henceforth, we assume that $f =\ord_p(x^2)m$ with $m\geq 1$.

	If $m$ is even, then we have $x^f=x^{\ord_p(x^2) m}={x^2}^{\ord_p(x^2)\frac{m}{2}}\equiv 1 \bmod p$, thus $p$ divides~$\Psi_f^{-}(x)$. Conversely, if $p$ divides~$\Psi_f^{-}(x)$, then we have $f=\ord_p(x)a=2\ord_p(x^2)a$ for some $a\geq 1$ by Lemma~\ref{lem:divpsi}~(i), hence %
	$m=2a$ must be even.\newline
	The second equivalence follows by contraposition.
\end{proof}

Let us reformulate the previous lemma from the point of view of~$\Psi_f^{\pm}(x)$: The polynomial value $\Psi_f^{+}(x)$ is divisible by~$p$ if and only if $p$ is unitary for $x$ and $f$ is an odd multiple of~$\ord_p(x^2)$.
Moreover, $\Psi_f^{-}(x)$ is divisible by~$p$  if and only if either $p$ is linear and $\ord_p(x^2)$ divides~$f$, or $p$ is unitary and $f$ is an even multiple (non-zero) of~$\ord_p(x^2)$.
With this in mind we prove a version of Corollary~\ref{coro:psipparts} taking into account linear and unitary primes. %
Two integers are said to have the same \emph{parity} if they are both even or both odd.

\begin{corollary} \label{coro:psieppparts}
	Let $p$ be odd and $m,n\geq 1$ positive integers such that \fbox{$\ord_p(x^2)$} divides both $n$ and $m$, and assume $m_p=n_p$.
	\begin{enumerate}[\rm(i)]
		\item If $p$ is linear for $x$, then the $p$-parts of~$\Psi_m^{-}(x)$ and $\Psi_n^{-}(x)$ as well as the $p$-parts of~$\Psi_m^{+}(x)$ and $\Psi_n^{+}(x)$ are equal.
		\item Assume that $p$ is unitary for $x$.
		      \begin{enumerate}[\rm(a)]
			      \item If $\frac{m}{\ord_p(x^2)}$ and $\frac{n}{\ord_p(x^2)}$ have the same parity, then the same conclusion as in~{\rm(i)} holds.
			      \item  If $\frac{m}{\ord_p(x^2)}$ and $\frac{n}{\ord_p(x^2)}$ have different parity, then the $p$-parts of~$\Psi_m^{-}(x)$ and $\Psi_n^{+}(x)$ as well as the $p$-parts of~$\Psi_m^{+}(x)$ and $\Psi_n^{-}(x)$ are equal.
		      \end{enumerate}
	\end{enumerate}
\end{corollary}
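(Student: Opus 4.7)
The plan is to reduce everything to Corollary~\ref{coro:psipparts}(i), which gives the analogous statement for $\Psi^{-}$ whenever $\ord_p(x)$ divides both exponents. The key auxiliary identity is the factorization $\Psi_{2f}^{-}(x)=\Psi_f^{-}(x)\,\Psi_f^{+}(x)$, which lets us pass between $\Psi^{\pm}$-values at $f$ and $\Psi^{-}$-values at $2f$; the second tool is Lemma~\ref{lem:divpsiep}, which tells us exactly when each factor $\Psi_f^{\pm}(x)$ is divisible by $p$ (and hence which $p$-parts are automatically equal to $1$).

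For the linear case (i), the situation is almost immediate: $\ord_p(x)=\ord_p(x^2)$ already divides both $m$ and $n$, so Corollary~\ref{coro:psipparts}(i) applies directly and gives $(\Psi_m^{-}(x))_p=(\Psi_n^{-}(x))_p$. The analogous equality for $\Psi^{+}$ is trivial, since by Lemma~\ref{lem:divpsiep}(i) the prime $p$ divides neither $\Psi_m^{+}(x)$ nor $\Psi_n^{+}(x)$, so both $p$-parts equal $1$.

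For the unitary case (ii) we have $\ord_p(x)=2\ord_p(x^2)$, which divides $2m$ and $2n$ but not necessarily $m$ and $n$. Since $p$ is odd we have $(2m)_p=m_p=n_p=(2n)_p$, so Corollary~\ref{coro:psipparts}(i) applied to the doubled exponents yields
\[
(\Psi_m^{-}(x))_p\,(\Psi_m^{+}(x))_p \;=\; (\Psi_{2m}^{-}(x))_p \;=\; (\Psi_{2n}^{-}(x))_p \;=\; (\Psi_n^{-}(x))_p\,(\Psi_n^{+}(x))_p.
\]
The proof then finishes by reading off, via Lemma~\ref{lem:divpsiep}(ii), which of the four $p$-parts on the two sides equal $1$, according to the parities of $k_m\coloneqq m/\ord_p(x^2)$ and $k_n\coloneqq n/\ord_p(x^2)$. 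In subcase~(a) with both $k_m,k_n$ even, the two $+$-factors are trivial and the equality of the $-$-factors follows; with both odd, the two $-$-factors are trivial and the equality of the $+$-factors follows. In subcase~(b), say $k_m$ even and $k_n$ odd, the $+$-factor on the left and the $-$-factor on the right are both $1$, so the remaining two factors $(\Psi_m^{-}(x))_p$ and $(\Psi_n^{+}(x))_p$ must coincide.

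The only mildly delicate point is the bookkeeping of which factors are coprime to $p$ in the unitary case, in particular the crossed equality of subcase~(b); but once the factorization $\Psi_{2f}^{-}=\Psi_f^{-}\Psi_f^{+}$ is combined with Lemma~\ref{lem:divpsiep}(ii), each case is essentially forced and no further calculation is required.
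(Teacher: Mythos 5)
Your proposal is correct and follows essentially the same route as the paper: reduce to Corollary~\ref{coro:psipparts}(i) (directly in the linear case, via $(2m)_p=(2n)_p$ and the factorization $\Psi_{2f}^{-}=\Psi_f^{-}\Psi_f^{+}$ in the unitary case), then use Lemma~\ref{lem:divpsiep} to identify which of the four $p$-parts are trivial and cancel them from the product identity. The paper phrases the bookkeeping in subcases (a)--(b) slightly more compactly, but the argument is identical.
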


\begin{proof}
	(i) Since~$p$ is linear for $x$, we have $\ord_p(x^2)=\ord_p(x)$ and refer to Corollary~\ref{coro:psipparts}. Note that we even have $(\Psi_m^{-}(x))_p=(\Psi_n^{-}(x))_p\neq 1$ and $(\Psi_m^{+}(x))_p=(\Psi_n^{+}(x))_p= 1$,	by Lemma~\ref{lem:divpsiep}~(i).

	(ii) Since~$p$ is unitary, we have $2\ord_p(x^2)=\ord_p(x)$. %
	By assumption, $\ord_p(x^2)$ divides both $n$ and $m$, so $\ord_p(x)$ divides both $2m$ and $2n$. Using this and $(2n)_p=(2m)_p$ (by assumption), we have $(\Psi_{2n}^{-}(x))_p=(\Psi_{2m}^{-}(x))_p$ by Corollary~\ref{coro:psipparts}. In particular, we have
	\begin{equation*}%
		\begin{aligned}
			(\Psi_n^{-}(x))_p\cdot (\Psi_n^{+}(x))_p
			 & =(\Psi_n^{-}(x) \cdot \Psi_n^{+}(x))_p
			=(\Psi_{2n}^{-}(x))_p                                                                                    \\
			 & =(\Psi_{2m}^{-}(x))_p=(\Psi_m^{-}(x) \cdot \Psi_m^{+}(x))_p=(\Psi_m^{-}(x))_p\cdot (\Psi_m^{+}(x))_p.
		\end{aligned}
	\end{equation*}

	(a) If $\frac{m}{\ord_p(x^2)}$ and $\frac{n}{\ord_p(x^2)}$ have the same parity, then $p$ divides either both $\Psi_m^{-}(x)$ and $\Psi_n^{-}(x)$ or both $\Psi_m^{+}(x)$ and $\Psi_n^{+}(x)$ by Lemma~\ref{lem:divpsiep}~(ii) using that $p$ is unitary. Since~$p$ can not divide both $\Psi_f^{-}(x)$ and $\Psi_f^{+}(x)$ for $f\in \{n,m\}$ (as~$p$ is odd), the above equation yields $(\Psi_m^{-}(x))_p=(\Psi_n^{-}(x))_p$ and $(\Psi_m^{+}(x))_p=(\Psi_n^{+}(x))_p$.

	(b) If $\frac{m}{\ord_p(x^2)}$ and $\frac{n}{\ord_p(x^2)}$ have different parity, then either $p$ divides both $\Psi_m^{-}(x)$ and $\Psi_n^{+}(x)$ or both $\Psi_m^{+}(x)$ and $\Psi_n^{-}(x)$, again by Lemma~\ref{lem:divpsiep}~(ii). %
	By the same argument as in~(a), the above equation yields $(\Psi_m^{-}(x))_p=(\Psi_n^{+}(x))_p$ and $(\Psi_m^{+}(x))_p=(\Psi_n^{-}(x))_p$.
\end{proof}

In the next section we develop the number-theoretical framework used in the proof of Theorem~\ref{thm:A} for classical groups, modifying the $p$-adic setup introduced in Section~\ref{padicsetup} by bringing $\ord_p(x)$ into the picture.

\subsection{Chopped \texorpdfstring{$e_p$-$p$}{e\textpinferior -p}-adic expansion}\label{eppadicsetup}

In this section, fix a positive integer $n\geq 2$ and an integer $x$. For any prime $p$ not dividing~$x$, we set $e_p\coloneqq \ord_p(x)$. Given such a prime $p$, we first define an expansion of~$n$ similar to its $p$-adic expansion but taking $e_p$ into account. The content of this section is very similar to Section \ref{padicsetup} but neither do the results in this section imply the results in Section~\ref{padicsetup} nor the other way around.

\begin{definition}[$e_p$-$p$-adic expansion]\label{def:eppadic} Let $p$ be a prime not dividing $x$.
	Assume that \fbox{$e_p\leq n$}. 
	Write $n=w e_p+s$ with non-negative integers $w\geq 1$ and $0\leq s<e_p<p$.
	Let 
	$$w=\sum_{i=0}^t c_i p^{a_i}=c_0+\sum_{i=1}^t c_i p^{a_i}$$ be the $p$-adic expansion of~$w$,
	where $t\geq 0$ and $0=a_0 < a_1< a_2< \ldots< a_t$ and $0\leq c_0\leq p-1$, and $1\leq c_i\leq p-1$ for all $1\leq i\leq t$.
	Then, the \emph{$e_p$-$p$-adic expansion} of~$n$ is the expansion
	$$n=s+w e_p=s+\sum_{i=0}^t c_i e_p p^{a_i}.$$
	Note that $s$ is the residue of~$n$ modulo $e_p$, and $c_0$ the residue of~$w$ modulo $p$.
\end{definition}

\begin{convention} Remark that, against our previous convention regarding the $p$-adic expansion of~$w=\frac{n-s}{e_p}$ in Section~\ref{padicsetup}, we intentionally include a \textsl{possibly zero} coefficient $c_0$ (being the residue of~$w$ modulo $p$) in the sum and do not write it as a separate summand. %
\end{convention}

\begin{remark}
	Notice that the $e_p$-$p$-adic expansion of~$n$ is usually not the same as its $p$-adic expansion since the coefficients $c_i e_p$ can be bigger or equal to~$p$. %
	However, for $e_p=1$ (e.g., for $p=2$), we recover the $p$-adic expansion of~$n$ and $c_0$ is just the residue of~$n$ modulo $p$. Hence, note that even for $e_p=1$, $s$ is defined differently than in Section~\ref{padicsetup} (for $e_p=1$, we have $s=0$).
\end{remark}

From now on, fix the following notation.
\begin{notation} Let $n\geq 2$ and $x$ an integer and $p$ and $q$ distinct primes not dividing~$x$ such that \fbox{$e_p\leq n$} and \fbox{$e_q\leq n$}.
	Fix the $e_p$-$p$-adic expansion $n=s+\sum_{i=0}^t c_i e_p p^{a_i}$ of~$n$ according to Definition \ref{def:eppadic}. We do \textsl{not} require $t\neq 0$, i.e., $p\leq w=\sum_{i=0}^t c_i p^{a_i}$.\newline
	Write $n=m e_q+r$ with integers $m\geq 1$ %
	and $0\leq r<e_q<q$. Note that $r$ is the residue of~$n$ modulo $e_q$.
\end{notation}

Similarly as in Section~\ref{padicsetup}, now depending on $r$, the residue of~$n$ modulo $e_q$, we define a decomposition of the $e_p$-$p$-adic expansion of~$n$, and examine properties of the parts of~$n$ obtained in this way. Note, however, that the quantities $b$, $T$ defined below now have a different meaning than before. %

\begin{lemma}\label{lem:nepeqquantities}
	Let $n\geq 2$ and assume that $e_p, e_q \leq n$.
	Then, the quantities
	\renewcommand{\arraystretch}{1.5}
	\begin{center}
		\begin{tabular}{|ll|}
			\hline
			$t_0$ & $\coloneqq \min\{i\in\{0,\dots, t\}  \mid  r< c_i e_p p^{a_i}\}$, \\
			$b$   & $\coloneqq s+\sum_{i=0}^{t_0-1} c_i e_p p^{a_i}$,                 \\
			$T$   & $\coloneqq \sum_{i=t_0}^t c_i e_p p^{a_i}$                        \\
			\hline
		\end{tabular}
	\end{center}
	\renewcommand{\arraystretch}{1}
	are well-defined, and we have $n=b+T$ with $T\neq 0$.
	Moreover, the following hold:
	\begin{enumerate}[\rm(i)]
		\item If $k\leq b$ is a positive integer divisible by~$e_p$, then we have $(T+k)_p=k_p$.
		\item If $k\leq r$ is a positive integer divisible by~$e_p$, then we have $(T-k)_p=k_p$.
		\item If \fbox{$r\geq s$ and $r\neq 0$}, then we have $b<2r$. If further $b\neq r$, $T$ is not divisible by~$e_q$. %
	\end{enumerate}
\end{lemma}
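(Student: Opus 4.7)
My approach closely parallels the proof of Lemma~\ref{lem:npqquantities}, the essential new ingredient being that $e_p=\ord_p(x)$ divides $p-1$ by Fermat's little theorem and is therefore coprime to $p$; consequently multiplication by $e_p$ preserves $p$-parts.

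\emph{Step~1: well-definedness.} I will set $S_k\coloneqq s+\sum_{i=0}^{k} c_i e_p p^{a_i}$ and, mimicking Equation~\eqref{eq:partialsumestimate}, use $s<e_p$ together with the geometric-sum bound to obtain
$$S_{k-1}\le (e_p-1)+e_p(p-1)\sum_{j=0}^{a_{k-1}}p^j=e_p p^{a_{k-1}+1}-1<e_p p^{a_k}\le c_k e_p p^{a_k}$$
for every $1\le k\le t$. Taking $k=t$ yields $\tfrac{n}{2}<c_t e_p p^{a_t}$; since $r<e_q$ and $n\ge e_q+r$ force $r<\tfrac{n}{2}$, the minimum $t_0$ exists, and then $b,T$ are well-defined with $T\ge c_{t_0}e_p p^{a_{t_0}}>r\ge 0$. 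I will also note that $t_0=0$ forces $c_0\ge 1$, since otherwise the inequality $r<c_0 e_p=0$ is impossible.

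\emph{Step~2: the $p$-part identities~\textup{(i)}--\textup{(ii)}.} Writing $w'\coloneqq\sum_{i=t_0}^t c_i p^{a_i}$ so that $T=e_p w'$, the condition $1\le c_{t_0}\le p-1$ makes the $p$-part of $w'$ equal to $p^{a_{t_0}}$. For $k=e_p k'$ with $e_p\mid k$ I have $k_p=k'_p$ and $(T\pm k)_p=(w'\pm k')_p$, so I need only track the $p$-part of $w'\pm k'$. For~\textup{(i)}, the bound $b<e_p p^{a_{t_0}}$ from Step~1 yields $k'<p^{a_{t_0}}=w'_p$, and the factoring argument of Lemma~\ref{lem:npqquantities}~\textup{(i)} gives $(w'+k')_p=k'_p$. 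For~\textup{(ii)}, the defining inequality $r<c_{t_0}e_p p^{a_{t_0}}$ forces $k'<c_{t_0}p^{a_{t_0}}<p^{a_{t_0}+1}$, so $k'_p\le p^{a_{t_0}}$. The subcase $k'_p<p^{a_{t_0}}$ is handled as in~\textup{(i)}; for $k'_p=p^{a_{t_0}}$ I will use $1\le k'_{p'}<c_{t_0}<p$ to deduce $w'_{p'}-k'_{p'}\equiv c_{t_0}-k'_{p'}\not\equiv 0\pmod p$, yielding $(w'-k')_p=k'_p$.

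\emph{Step~3: statement~\textup{(iii)}.} Assuming $r\ge s$ and $r\neq 0$, I establish $b<2r$ by a case analysis on $t_0$: for $t_0=0$ the claim $b=s\le r$ is immediate; for $t_0=1$ I separate $c_0=0$ (where $b=s$) from $c_0\neq 0$, using $s<e_p\le c_0 e_p\le r$ to obtain $b=s+c_0 e_p<2c_0 e_p\le 2r$; for $t_0\ge 2$ the partial-sum estimate combined with $c_{t_0-1}e_p p^{a_{t_0-1}}\le r$ gives $b=S_{t_0-2}+c_{t_0-1}e_p p^{a_{t_0-1}}<2r$. Finally, from $n=b+T=me_q+r$ I get $T\equiv r-b\pmod{e_q}$; since $0\le b<2r$ and $r<e_q$ imply $-e_q<r-b<e_q$, the assumption $b\neq r$ forces $T\not\equiv 0\pmod{e_q}$. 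The main technical point, exactly as in the model proof of Lemma~\ref{lem:npqquantities}, is the subcase $k'_p=p^{a_{t_0}}$ in~\textup{(ii)}, where the strict inequality $k'_{p'}<c_{t_0}$ is precisely what prevents an unexpected cancellation modulo $p$; no other step presents substantive difficulty.
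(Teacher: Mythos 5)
Your proof is correct and mirrors the paper's argument; the one streamlining is your explicit factorization $T = e_p w'$, $k = e_p k'$, which (since $e_p<p$ is coprime to $p$) reduces each $p$-part comparison to $(w'\pm k')_p = k'_p$ in the spirit of Lemma~\ref{lem:npqquantities}, while the paper works with $T$ directly and threads the divisibility of $k_{p'}$ by $e_p$ through each step. The one small omission: your Step~1 derives $\tfrac{n}{2}<c_t e_p p^{a_t}$ only for $t\geq 1$, and the case $t=0$ (where $n=s+c_0 e_p$ and $c_0\geq 1$ is forced by $s<e_p\leq n$) needs the separate, short argument the paper supplies.
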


\begin{proof} For $0\leq k\leq t$, let $S_k\coloneqq s+e_p(\sum_{i=0}^k c_i p^{a_i})$ be the $k$th partial sum of the $e_p$-$p$-adic expansion of~$n$.
	First assume that $t\neq 0$. For all $0\leq k< t$, the geometric sum formula gives $ \sum_{i=0}^k c_i p^{a_i}\leq \sum_{i=0}^{k} (p-1)p^{a_i}\leq \sum_{i=0}^{a_k} (p-1)p^{i}=p^{a_k+1} -1$.
	This yields
	\begin{align*}\label{eq:partialsum} \tag{a} 
		S_k< e_p + e_p(p^{a_k+1} - 1)= e_pp^{a_k+1}\leq e_p p^{a_{(k+1)}},
	\end{align*}
	hence $S_k<  e_p p^{a_{k+1}}$. %
	In particular, we get 
	$n=S_{t-1}+c_t e_p p^{a_t}< e_p p^{a_t}+c_t e_p p^{a_t}\leq 2 c_t e_p p^{a_t}$.\newline
	If $t=0$, we have $n=s+c_0e_p=s+c_0e_pp^0$ with $c_0\neq 0$ since~$s<e_p\leq n$. %
	Then we have $n=s+c_0e_pp^0<2c_0e_p p^0=2c_t e_p p^{a_t}$.  %
	In total, we get
	\begin{align*}\tag{b} \label{eq:naivebound}
		r<\frac{r+me_q}{2}=\frac{n}{2}< c_t e_p p^{a_t}.
	\end{align*}
	Thus, $t_0\coloneqq \min\{i\in\{0,\dots, t\}  \mid  r< c_i e_p p^{a_i}\}$ is well-defined, and so are \linebreak	$b=s+\sum_{i=0}^{t_0-1} c_i e_p p^{a_i}$ and $T= \sum_{i=t_0}^t c_i e_p p^{a_i}$.
	Notice that $T>r$ by definition, in particular we have $T\neq 0$. %

	By Equation~\eqref{eq:partialsum}, we have the estimate $b=S_{t_0-1}< e_p p^{a_{t_0}}$ setting $S_{-1}\coloneqq s$.
	Remark that the $p$-part of~$T$ is $T_p=p^{a_{t_0}}\geq 1 $ using that $c_{t_0}\neq 0$.

	For the proof of (i) and (ii), let $k\geq 1$ be divisible by~$e_p$. This is equivalent to~$e_p$ dividing the $p'$-part $k_{p'}$ noticing $k=k_p k_{p'}$ and $e_p<p$. %

	(i) 	%
	Let $k\leq b$.
	As $b<e_p p^{a_{t_0}}<p^{a_{t_0}+1}$, the $p$-part of~$k\leq b$ satisfies $k_p\leq p^{a_{t_0}}=T_p$, and since~$e_p$ divides~$k_{p'}$, we obtain $k_p<T_p$. %
	Now, we conclude analogously to Lemma~\ref{lem:npqquantities}~(i).

	(ii) %
	Let $k\leq r$. Note that $T-k>0$, as~$r<T$.
	As~$k\leq r<c_{t_0} e_p p^{a_{t_0}}<e_p p^{a_{t_0} +1}$, by Equation~\eqref{eq:naivebound}, and $k_{p'}$ is divisible by~$e_p$, we have $k_p\leq p^{a_{t_0}}=T_p$.
	With a similar argument as in~(i), $k_p<T_p$ directly implies $(T-k)_p=k_p$.

	If $k_p=T_p=p^{a_{t_0}}$, observe that $p^{a_{t_0}} k_{p'}=k \leq r < c_{t_0} e_p p^{a_{t_0}}$, hence $k_{p'}=de_p$ with $1\leq d<c_{t_0}$ as~$e_p$ divides~$k_{p'}$.
	Note that $T_{p'}$ is of the form $e_p( c_{t_0}+py)$ for some $y\geq 0$. Hence, we get that $T_{p'}-k_{p'}=e_pc_{t_0}-k_{p'}+e_p p y=e_p(c_{t_0}-d+py)$ is not divisible by~$p$ using that $1 \leq d< c_{t_0}<p$ and $e_p<p$. %

	We conclude that $T-k=k_p(T_{p'}-k_{p'})$ has $p$-part $k_p$.

	(iii) Assume that $r\geq s$ and $r\neq 0$.
	Note that for $r=s$, we have $t_0\leq 1$, and we have $t_0=1$ if and only if $c_0=0$. %
	In particular, we get $b=s$ if $t_0=0$ and $b=s+c_0e_p=s$ if $t_0=1$, i.e., in both cases we have $b=s=r$.

	Now, we show that $b<2r$.
	Indeed,
	if $t_0=0$, then we have $b=s\leq r<2r$ using $s\leq r$ and $r\neq 0$. \\
	If $t_0=1$, then we have $c_0 e_p\leq r$ by definition of~$t_0$ and thus $b=s+c_0 e_p<2r$ using that $c_0=0$ if $r=s$, as observed in the previous paragraph.
	For $t_0\geq 2$, we have \linebreak $b=S_{t_0-1}=S_{t_0-2}+c_{t_0-1}e_p p^{a_{t_0-1}}<2c_{t_0-1} e_p p^{a_{t_0-1}}<2r$ using $S_{t_0-2}< e_p p^{a_{t_0-1}}$, by Equation~\eqref{eq:partialsum}.
	 Thus, we have $b<2r$ in all cases.

	This bound also implies that $T=(n-r)+(r-b)=me_q+(r-b)$ is not divisible by~$e_q$ whenever $r\neq b$ taking into account that $r<e_q$.
\end{proof}

Remark that the divisibility condition in Lemma~\ref{lem:nepeqquantities}~(i)--(ii) is crucial, otherwise we would not have enough control on the $p$-part of~$T\pm k$ in the case $k_p=T_p$.

\begin{remark}\label{rem:eppadic}
	In contrast to the decomposition of the $p$-adic expansion of~$n$ defined in Lemma~\ref{lem:npqquantities} (assuming $p\leq n$) we now have a splitting $n=b+T$ with %
	slightly different properties. As before, we have $r<T$ and the $p$-part of~$T$ is $T_p=p^{a_{t_0}}$, but $T_p=1$ is possible since we have no assumption on $p$ relative to~$n$ but only require $e_p\leq n$.
	Since~$e_p$ is involved in the $e_p$-$p$-adic expansion of~$n$, we get that $b\equiv s \bmod e_p$ and the \textsl{additional} feature that $T$ is divisible by~$e_p$.

	Finally, note that, even if $e_p=1$ (e.g., for $p=2$), in general the decomposition $n=b+T$ is different from the one in Lemma~\ref{lem:npqquantities} since the question here is where $r$, the residue of~$n$ modulo $e_q$ (and not $q$), fits into the $1$-$p$-adic (i.e., the $p$-adic) expansion of~$n$.
\end{remark}

Next, we modify the decomposition $n=b+T$ of the $e_p$-$p$-adic decomposition of~$n$, similarly to the process in Section~\ref{padicsetup}. %

\begin{lemma}\label{lem:nepeqquantities2}
	We keep the assumptions of Lemma~\ref{lem:nepeqquantities}. %
	Let $r'\coloneqq e_q+r$, and \fbox{$n\geq 2r'$}, and $t_0$, $b$ be as defined in Lemma~\ref{lem:nepeqquantities}.
	Then, the quantities
	\renewcommand{\arraystretch}{1.5}
	\begin{center}
		\begin{tabular}{|ll|}
			\hline
			$t_0'$ & $\coloneqq \min\{i\in\{0,\dots, t\}  \mid  r'< c_i e_p p^{a_i}\}$, \\
			$b'$   & $\coloneqq s+\sum_{i=0}^{t_0'-1} c_i e_p p^{a_i}$                  \\
			$T'$   & $\coloneqq \sum_{i=t_0'}^t c_i e_p p^{a_i}$                        \\
			\hline
		\end{tabular}
	\end{center}
	\renewcommand{\arraystretch}{1}
	are well-defined,
	and we have $n=b'+T'$ with $T'\neq 0$.
	Moreover, the following hold:
	\begin{enumerate}[\rm(i)]
		\item If $k\leq b'$  is a positive integer divisible by~$e_p$, then we have $(T'+k)_p=k_p$.
		\item If $k\leq r'$  is a positive integer divisible by~$e_p$, then we have $(T'- k)_p=k_p$.
		\item If \fbox{$r\geq s$}, then we have $b'<2r'$.
	\end{enumerate}
	If we assume \fbox{$b\geq e_q$} and \fbox{$r\geq s$}, then we have $ t_0'\in\{t_0, t_0+1\}$ and $b'\neq r'$, and the following hold:
	\begin{enumerate}
		\item[\rm(iv)] If $t_0'=t_0$, then we have $b'=b<r'$. In this case, $T'$ is not divisible by~$e_q$.
		\item[\rm(v)]  If $t_0'=t_0+1$, then we have $r'<b'<e_q+3r$.
	\end{enumerate}
\end{lemma}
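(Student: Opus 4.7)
The plan is to mirror the proof of Lemma~\ref{lem:npqquantities2}, leveraging the $e_p$-$p$-adic groundwork already laid in Lemma~\ref{lem:nepeqquantities}. First I would establish well-definedness: from the proof of Lemma~\ref{lem:nepeqquantities} we have $\frac{n}{2} < c_t e_p p^{a_t}$, so combining with the hypothesis $n \geq 2r'$ yields $r' \leq \frac{n}{2} < c_t e_p p^{a_t}$; hence $t_0'$, $b'$ and $T'$ are well-defined, and $T' > r' \geq e_q \geq 1$ gives both $T' \neq 0$ and $n = b' + T'$.

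For parts (i), (ii), (iii), I would simply repeat the arguments of Lemma~\ref{lem:nepeqquantities}(i)--(iii), replacing $t_0, T, b, r$ by $t_0', T', b', r'$ throughout, while keeping the hypothesis $r \geq s$ in its original form (i.e., \emph{not} substituting $r'$ for $r$ when comparing to the residue $s$). The only delicate point is (iii): one needs $r' \neq 0$ to handle the small-$t_0'$ cases, but this is automatic from $r' \geq e_q \geq 1$, so the analysis is strictly easier than in the $r$-version.

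For the refined assertions under the stronger hypotheses $b \geq e_q$ and $r \geq s$, I would first note that $r \neq 0$: otherwise $r \geq s$ would force $s = 0$, and then the $e_p$-$p$-adic computation would give $b = 0$, contradicting $b \geq e_q \geq 1$. Hence Lemma~\ref{lem:nepeqquantities}(iii) applies and provides the bound $b < 2r$. The double inclusion $t_0 \leq t_0' \leq t_0 + 1$ then follows: the first inequality is immediate from $r \leq r'$, and the second from the chain
\[ r' = e_q + r \leq b + r < e_p p^{a_{t_0}} + c_{t_0} e_p p^{a_{t_0}} \leq p \cdot e_p p^{a_{t_0}} = e_p p^{a_{t_0}+1} \leq c_{t_0+1} e_p p^{a_{t_0+1}}, \]
using the bound $b < e_p p^{a_{t_0}}$ recorded in the proof of Lemma~\ref{lem:nepeqquantities} together with $1 + c_{t_0} \leq p$.

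With this dichotomy in hand, case (iv) ($t_0' = t_0$) is immediate: $b' = b < 2r < e_q + r = r'$, and $T' = T$ is coprime to $e_q$ by Lemma~\ref{lem:nepeqquantities}(iii) since $b > r$ (note $b \geq e_q > r$). Case (v) ($t_0' = t_0 + 1$) uses $c_{t_0} e_p p^{a_{t_0}} > r$ (by definition of $t_0$) and $b \geq e_q$ to give $b' = b + c_{t_0} e_p p^{a_{t_0}} > e_q + r = r'$, while the upper bound $b' < e_q + 3r$ follows from $b < 2r$ combined with $c_{t_0} e_p p^{a_{t_0}} \leq r'$ (by minimality of $t_0'$). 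In both cases $b' \neq r'$ is then automatic. The main obstacle I anticipate is not conceptual but organizational: carefully tracking which of the several hypotheses ($r \geq s$, $b \geq e_q$, $n \geq 2r'$, $e_p < p$, $c_{t_0} \leq p-1$) each inequality relies on, and checking that the boundary situations ($t_0 = 0$, $t_0 = t$, or $r = s$) do not fall through the cracks.
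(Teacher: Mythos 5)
Your proposal is correct and follows the same route as the paper, which proves this lemma by the analogous arguments used for Lemma~\ref{lem:npqquantities2}~(iv)--(v) with $q$ replaced by~$e_q$ and the extra factor~$e_p$ carried along; you have simply written out the details that the paper leaves implicit, and your chain of inequalities for $t_0'\leq t_0+1$ and the bounds in (iv)--(v) all check out. One small terminological slip in part~(iv): you claim that $T'=T$ is \emph{coprime} to~$e_q$, but Lemma~\ref{lem:nepeqquantities}~(iii) (and the assertion being proved) only gives that $T$ is not \emph{divisible} by~$e_q$ --- the two are not equivalent in general, though the weaker, correct claim is exactly what your argument needs and what the cited lemma supplies.
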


\begin{proof}
	Using the assumption $2r'\leq n$ and the estimate $n< 2 c_t e_p p^{a_t}$ (obtained in the proof of Lemma~\ref{lem:nepeqquantities} right after Equation~\eqref{eq:partialsum}), we get that $r'\leq \frac{n}{2} < c_t e_p p^{a_t}$. Thus $t_0'$ is well-defined and so are $b'= s+\sum_{i=0}^{t_0'-1} c_i e_p p^{a_i}$ and $T'=\sum_{i=t_0'}^t c_i e_p p^{a_i}$.
	In particular, we have $n=b'+T'$ with $T'>r'>0$ and $b'\equiv s \equiv b \bmod e_p$.

	(i)--(iii) The proof is analogous to the proof of Lemma~\ref{lem:nepeqquantities}~(i)--(iii) replacing $t_0$, $T$, $b$ by~$t_0'$, $T'$, $b'$, respectively and $r$ by~$r'$ (except in the assumption $r>s$).

	(iv)--(v) The proof goes along the lines of the proof of Lemma~\ref{lem:npqquantities2}~(iv)--(v) substituting $q$ by~$e_q$. Here, in contrast to Section~\ref{padicsetup}, we use the $e_p$-$p$-adic instead of the $p$-adic expansion of~$n$ to define $b$ and $T$. %
	Thus, we occasionally have to take into account the additional factor~$e_p$ whenever terms of the $p$-adic expansion occur in the original proof. Following the proof of Lemma~\ref{lem:npqquantities2}~(iv)--(v),
	we heavily use the assumption $b\geq e_q\geq r\geq s$, first to observe that $r\neq 0$, to see that $t_0'\leq t_0+1$ (using only $b\geq e_q$), and then to make use of the bound $b<2r$ obtained in Lemma~\ref{lem:nepeqquantities}~(iii).
\end{proof}
Unfortunately, in Lemma~\ref{lem:nepeqquantities2}~(v), we can not decide whether $T$ is divisible by~$e_q$ or not; the best bound we have is $0<b'-r'<2r<2 e_q$.

\subsection{Proof of Theorem \texorpdfstring{\ref{thm:A}}{A} in type A}\label{sec:prooftypeA}

In this section, let $G={\GG}_{sc}^F$ with $\GG_{sc}$ a simple simply connected group of type $\type{A}_{n-1}$, for $n\geq 2$, defined over $\overline{ \F_\QQ}$, %
\linebreak that is $G\in\{\SL n \QQ, \SU n \QQ\}$. Writing $\SL n{-\QQ}\coloneqq \SU n \QQ$ and $\PSL n {-\QQ}\coloneqq \PSU n \QQ$, we have $G=\SL  n {\eps \QQ}$ and $G/\zent G=\PSL n {\eps \QQ}$, with $\eps\in \{\pm 1\}$. The aim of this section is to prove Conjecture (NRS) for $G$ and $G/\zent G$, thus proving Theorem~\ref{thm:A} for~$G/\zent G=\PSL n {\eps \QQ}$.

The unipotent characters of $G$ are parametrized by partitions of $n$ where the trivial partition $(n)$ corresponds to the trivial character $\mathbbm{1}_G$ and the partition $(1^n)$ to the Steinberg character, see \cite[Chapter 4.3]{GM}. As for the symmetric group, let $\chi^\lambda$ denote the unipotent character of $G$ corresponding to a partition $\lambda\vdash n$. We say that $\lambda$ has \linebreak \emph{degree} $d^\lambda\coloneqq \chi^\lambda(1)$.
There is a $\QQ$-analog of the hook formula (for $\sym n$) which computes $d^\lambda$, see \cite[Proposition 4.3.1, Proposition 4.3.5]{GM}.\newline
Given a prime $p$ coprime to $\QQ$, henceforth called a \emph{non-defining prime}, let $e_p\coloneqq \ord_p(\eps \QQ)$. By a classical result, two characters $\chi^\lambda$ and $\chi^\mu$ lie in the same $p$-block of $G$ if and only if the $e_p$-cores of the partitions $\lambda, \mu\vdash n$ agree. 
For the original statements of Fong--Srinivasan and Brou\'{e} treating $\GL n {\eps\QQ}$, see \cite[(5D), (7A)]{FStypeA} (for $p\neq 2$) and \linebreak \cite[(3.9) Théorème]{BroueBlocksTypeA} (for any $p$), respectively, and finally combine with \cite[Proposition 11]{CE93} to transfer results from $\GL n {\eps\QQ}$ to $G$.\newline
In particular, the character $\chi^\lambda$ lies in the principal $p$-block of $G$ if and only if the $e_p$-core of $\lambda$ equals $(s)$ given $n=we_p+s$ with $0\leq s<e_p$.

\begin{definition}
	Let $n\geq 2$ and $p$ a non-defining prime. %
	We say that a partition $\lambda\vdash n$ is \emph{$p$-principal} (in type $\type{A}_n$ or $\tw{2}{\type{A}_n}$)
	if the corresponding unipotent character $\chi^\lambda$ lies in the principal $p$-block of~$G$. \newline%
	If $q$ is a prime distinct from $p$, we say that $\lambda$ is \emph{$\{p,q\}$-principal} if $\lambda$ is $p$-principal as well as $q$-principal.
\end{definition}
Observe that any partition is $2$-principal as $e_2=1$. This also follows from \linebreak \cite[Theorem 13]{CE93}.%

We first state two lemmas about almost hook partitions (see Definition~\ref{def:almhooks}) which are basically analogs of the corresponding statements for the symmetric group. To do so, we introduce the following notation.

\begin{notation}
	If $b\coloneqq p^m$ is a prime power and $k\geq 1$ a positive integer, define $k_{b'}\coloneqq k_{p'}=\frac{k}{k_p}$, the \emph{part of~$k$ coprime to $b$}, in short \emph{$b'$-part} of~$k$.
\end{notation}

\begin{notation}
	In this section, let $\Psi_f\coloneqq \Psi_f(\eps\QQ)\coloneqq\Psi_f^{-}(\eps\QQ)=(\eps\QQ)^f-1$ for any $f\geq 1$.
\end{notation}
Notice that the $\QQ'$-part of $\Psi_f$ equals $\Psi_f$.

\begin{lemma}\label{lem:degalmosthookpartitiontypeA2A}
	Let $n\geq 2$. %
	Up to sign, the $\QQ'$-part of the degree of the almost hook partition
	$(1^{n-k-\ell-1},k+1,\ell)$, where  $0\leq k< \ell< n-k\leq n$, %
	is	%
	$$\frac{\prod_{i=1}^{k} \Psi_{n-k+i} \cdot \prod\limits_{\substack{i=1\\ i\neq \ell-k}}^{\ell} \Psi_{n-k-i} }{\prod_{i=1}^{k} \Psi_i \cdot \prod\limits_{\substack{i=1 \\ i\neq \ell-k}}^\ell \Psi_i }= \frac{\prod\limits_{\substack{i=1\\i\neq \ell-k}}^{\ell} \Psi_{n-\ell+i} \cdot \prod_{i=1}^{k} \Psi_{n-\ell-i} }{\prod\limits_{\substack{i=1 \\ i\neq \ell-k}}^\ell \Psi_i \cdot \prod_{i=1}^{k} \Psi_i  }.$$
\end{lemma}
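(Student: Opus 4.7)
The plan is to imitate the proof of Lemma~\ref{lem:degalmosthookpartition} verbatim, substituting ordinary integers $i$ by the cyclotomic-polynomial values $\Psi_i$. First I would recall the $\QQ$-analog of the Frame--Robinson--Thrall hook formula for unipotent characters of $\SL_n(\eps\QQ)$ stated in \cite[Propositions 4.3.1 and 4.3.5]{GM}: up to a sign and an overall power of $\QQ$, one has, for any $\mu\vdash n$,
$$d^\mu \;=\; \pm\, \QQ^{a(\mu)}\, \frac{\prod_{i=1}^n \Psi_i}{\prod_{h\in H(\mu)} \Psi_h},$$
where $H(\mu)$ is the multiset of hook lengths of~$\mu$ and $a(\mu)\in\Z_{\geq 0}$. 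Since each $\Psi_f$ is coprime to $\QQ$, extracting the $\QQ'$-part simply removes the leading factor $\pm\QQ^{a(\mu)}$ and leaves the displayed quotient of $\Psi$-products.

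Next I would reuse the combinatorial data already computed in the proof of Lemma~\ref{lem:degalmosthookpartition}: the multiset of hook lengths of $\lambda=(1^{n-k-\ell-1},k+1,\ell)$ consists of the arm $1,\ldots,n-k-\ell-1$ of the first column, the first row contribution $1,\ldots,k$, the second row contribution $1,\ldots,\widehat{\ell-k},\ldots,\ell$, and the two long hooks of lengths $n-\ell$ and $n-k$. Substituting these into the fraction above and cancelling the common initial factors $\Psi_1\cdots\Psi_{n-k-\ell}$ between numerator and denominator yields, in perfect analogy with Equation~\eqref{eq:degalmhook},
$$|d^\lambda|_{\QQ'} \;=\; \left|\,\frac{\prod\limits_{\substack{i=n-k-\ell \\ i\neq n-\ell,\, n-k}}^{n} \Psi_i}{\prod\limits_{i=1}^{k} \Psi_i \cdot \prod\limits_{\substack{i=1 \\ i\neq \ell-k}}^{\ell} \Psi_i}\,\right|.$$

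The two claimed expressions are then obtained exactly as in Lemma~\ref{lem:degalmosthookpartition} by splitting the numerator product once at the omitted index $n-k$ (and re-indexing via $i\mapsto n-k\pm i$) and once at $n-\ell$ (re-indexing via $i\mapsto n-\ell\pm i$). Both manipulations are purely formal rewritings of index sets and do not require any property of the $\Psi_i$ beyond their being a sequence indexed by positive integers, so the same algebra that worked for the integers in Lemma~\ref{lem:degalmosthookpartition} carries over verbatim.

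The main, and essentially only, obstacle is locating in \cite{GM} the precise form of the $\QQ$-hook formula for the unipotent characters of $\SL_n(\eps\QQ)$ (rather than of $\GL_n(\eps\QQ)$), and confirming that passing from $\GL$ to $\SL$ does not distort the $\QQ'$-part of the degree; this is harmless because unipotent characters of $\GL_n(\eps\QQ)$ restrict irreducibly to $\SL_n(\eps\QQ)$ and the index $[\GL_n(\eps\QQ):\SL_n(\eps\QQ)]=\QQ-\eps$ contributes only to factors already absorbed in the formal fraction. Once this is checked, no further combinatorial or number-theoretic input is needed.
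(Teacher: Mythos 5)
Your proof is correct and takes essentially the same route as the paper's: cite the $\QQ$-analog hook formula from GM to get $d^\mu = E\cdot\QQ^a\cdot\prod_{i=1}^n\Psi_i/\prod_{h\in\mathcal{H}}\Psi_h$, observe that each $\Psi_f$ is coprime to $\QQ$ so that the $\QQ'$-part is the displayed quotient of $\Psi$'s, and then transport the index-splitting manipulations from the proof of Lemma~\ref{lem:degalmosthookpartition} verbatim. Your closing caveat about $\GL$ versus $\SL$ is harmless but slightly misstated: since the unipotent characters of $\GL_n(\eps\QQ)$ restrict irreducibly to $\SL_n(\eps\QQ)$, their \emph{degrees} coincide outright, so nothing needs to be ``absorbed''; in any case the degree formula in GM is stated for $\GG^F$ uniformly and applies to $\SL_n(\eps\QQ)$ directly.
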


\begin{proof}
	By \cite[Proposition 4.3.1, Proposition 4.3.3]{GM}, %
	the unipotent character $\chi^\mu$ \linebreak
	parametrized by a partition $\mu\vdash n$ has degree $
		d^\mu=E \cdot \QQ^{a} \cdot \frac{\prod_{k=1}^n \Psi_k}{\prod_{h \in \mathcal{H}} \Psi_{h}},
	$ where $E\in\{\pm 1\}$ is a sign, $a\geq 0$ a non-negative integer (both depending on $\mu$), and $h$ runs through the multiset $\mathcal{H}$ of hook lengths of~$\mu$.
	In particular, the $\QQ'$-part of $d^\lambda$ equals $\frac{\prod_{k=1}^n \Psi_k}{\prod_{h\in \mathcal{H}} \Psi_{h}}$.
	Now, the proof is analogous to the proof of Lemma~\ref{lem:degalmosthookpartition}.%
\end{proof}

\begin{lemma} \label{lem:almosthookpartitionpprincipaltypeA} Let $n\geq 2$, and $p$ a non-defining prime. %
	The almost hook partition \linebreak $(1^{n-k-\ell-1},k+1,\ell)$, where $0\leq k<  \ell< n-k\leq n$, is $p$-principal if $n-\ell$ or $n-k$ are divisible by~$e_p$.
\end{lemma}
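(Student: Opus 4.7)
The plan is to mimic the proof of the analogous statement for symmetric groups (Lemma~\ref{lem:almosthookpartitionpprincipal}), replacing $p$ by $e_p$ throughout, and relying on Lemma~\ref{lem:partitionecoreinsteps} applied with $e\coloneqq e_p$. Recall that $\lambda$ being $p$-principal in type $\type A_{n-1}$ (or its twisted form) means exactly that the $e_p$-core of~$\lambda$ equals the $e_p$-core of~$(n)$, namely the single-row partition $(s)$ with $s\coloneqq n \bmod e_p$. So the task reduces to showing that, under either divisibility hypothesis, the $e_p$-core of the almost hook partition $\lambda=(1^{n-k-\ell-1},k+1,\ell)$ is $(s)$.

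The key combinatorial observation is that $\lambda$ has, apart from small hooks, exactly two large hooks at the top-left corner cells: a hook of length $n-k$ at position $(1,1)$ and a hook of length $n-\ell$ at position $(2,1)$. I would first record (by inspecting the Young diagram, or equivalently by identifying the appropriate rim/border strips) the following two facts:
\begin{itemize}
\item Removing the rim hook of length $n-k$ at $(1,1)$ from $\lambda$ yields the single-row partition $(k)$.
\item Removing the rim hook of length $n-\ell$ at $(2,1)$ from $\lambda$ yields the single-row partition $(\ell)$.
\end{itemize}
Indeed, in the first case the rim of~$\lambda$ has exactly $n-k$ cells and its complement in the Young diagram is the top-left block of $k$ cells in row $1$; in the second case the rim hook consists of the bottom of the first column together with the whole second row, leaving row~$1$ untouched.

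With these at hand, the two cases are settled uniformly. If $e_p \mid n-\ell$, then the rim hook of length $n-\ell$ is an $m e_p$-hook with $m=(n-\ell)/e_p$, so by Lemma~\ref{lem:partitionecoreinsteps} (applied with $e=e_p$) the $e_p$-cores of $\lambda$ and $(\ell)$ coincide. Since $\ell \equiv n \equiv s \pmod{e_p}$, the $e_p$-core of the one-row partition $(\ell)$ is obtained by repeatedly stripping $e_p$-hooks from its tail, giving $(s)$. Symmetrically, if $e_p \mid n-k$, another application of Lemma~\ref{lem:partitionecoreinsteps} gives that $\lambda$ and $(k)$ share the same $e_p$-core, which is $(s)$ since $k\equiv n \equiv s \pmod{e_p}$. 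In either case $\lambda$ has $e_p$-core $(s)$ and is therefore $p$-principal.

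The main (and only) point needing care is the combinatorial verification that removing the two large hooks of~$\lambda$ produces $(k)$ and $(\ell)$ respectively; once this is in place, the statement follows by direct application of Lemma~\ref{lem:partitionecoreinsteps}. No further assumption on $p$, $\QQ$, or $e_p$ is needed beyond $p$ being non-defining (so that $e_p$ is well-defined), and the argument is valid uniformly for both split and twisted type~$\type A$.
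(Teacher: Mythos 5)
Your proof is correct and takes essentially the same approach the paper intends: the paper's own proof is just a one-line reference back to the $\sym n$-case (Lemma~\ref{lem:almosthookpartitionpprincipal}) combined with Lemma~\ref{lem:partitionecoreinsteps} with $e=e_p$, and your write-up fills in exactly the Young-diagram computation (the two large rim hooks of lengths $n-k$ at $(1,1)$ and $n-\ell$ at $(2,1)$, whose removal leaves $(k)$ and $(\ell)$ respectively) that the paper leaves implicit.
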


\begin{proof} The proof is similar to the one of Lemma~\ref{lem:almosthookpartitionpprincipal} ($\sym n$-case) and uses Lemma~\ref{lem:partitionecoreinsteps}, now with $e_p$ playing the role of~$e$.
\end{proof}

Now, we are able to prove Conjecture (NRS) for $G$ using the $e_p$-$p$-adic setup introduced in the previous section (with $x$ chosen as $x=\QQ$). Then, the partitions and cases are constructed the same way as for symmetric groups up to some minor modifications. In contrast to the symmetric groups case, the analysis of partition degrees is a little more involved and requires some basic tools from Section~\ref{sec:degreeanalysis}.

\begin{proposition}\label{pqtypeA}
	Let $G=\SL{n}{\eps \QQ}$ with $n\geq 2$ and $\eps\in\{\pm 1\}$. Let $p$ and $q$ be distinct primes such that $p$ and $q$ do not divide~$\QQ$, and $p$ and $q$ divide~$\abs G$.
	Then, we have $$\irrprime p {\blprinc{p}{G}} \cap \irrprime q {\blprinc{q}{G}}\neq \{\mathbbm{1}_G\}.$$
\end{proposition}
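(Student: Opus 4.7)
The plan is to adapt the proof of Proposition~\ref{pqSn} to type $\type{A}$, with the $p$-adic setup of Section~\ref{padicsetup} replaced by the $e_p$-$p$-adic setup of Section~\ref{eppadicsetup}, and elementary divisibility of integers replaced by the $\Psi_f$-arithmetic developed in Section~\ref{sec:degreeanalysis}. First I would write $n = w e_p + s$ and $n = m e_q + r$ with $0 \leq s < e_p$ and $0 \leq r < e_q$, and by possibly swapping $p$ and $q$ assume $r \geq s$. The hypothesis $p, q \mid \abs{G}$ guarantees $e_p, e_q \leq n$, so the setup of Section~\ref{eppadicsetup} applies. In the edge case $r \leq 1$ (which forces $s \leq 1$), the partition $(1^n)$, viewed as the hook with $k = 0$, $\ell = 1$, is $\{p,q\}$-principal by Lemma~\ref{lem:almosthookpartitionpprincipaltypeA} (since $e_p$ and $e_q$ divide $n$ or $n-1$), and it labels the Steinberg character, whose degree is a power of $\QQ$ and therefore coprime to the non-defining primes $p$ and $q$. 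Henceforth I assume $r \geq \max(2, s)$, so in particular $e_q \geq 3$.

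Next I apply Lemma~\ref{lem:nepeqquantities} to the $e_p$-$p$-adic expansion of $n$ to obtain a decomposition $n = b + T$ with $b \equiv s \pmod{e_p}$, $e_p \mid T$, and $r < T$. The case distinction then follows the one in the proof of Proposition~\ref{pqSn} verbatim with $p, q$ replaced by $e_p, e_q$: Cases~I ($r = b$), IIa ($r > b$), IIb ($r < b < e_q$), IIbb ($r < e_q \leq b$ and $e_q \nmid m - 1$), and Cases~IIIa, IIIb ($r < e_q \leq b$ and $e_q \mid m - 1$, further split by whether $r' > b'$ or $r' < b'$, where $r' = e_q + r$ and $b', T'$ come from Lemma~\ref{lem:nepeqquantities2}). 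In each case I take the very same almost hook partition $\lambda$ as in Table~\ref{tab:partitionsforSn}, reinterpreting $R \coloneqq m e_q$ (and $R' \coloneqq (m-1)e_q$ in Case~III).

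The block analysis is immediate: $e_p$ divides $T$ (resp.\ $T'$) by construction of the $e_p$-$p$-adic decomposition and $e_q$ divides $R$ (resp.\ $R'$), so one of the hypotheses of Lemma~\ref{lem:almosthookpartitionpprincipaltypeA} holds in each case and $\lambda$ is $\{p,q\}$-principal. For the degree analysis, Lemma~\ref{lem:degalmosthookpartitiontypeA2A} expresses the $\QQ'$-part of $d^\lambda$ as a ratio of products of values $\Psi_f(\eps\QQ)$. Since $p \mid \Psi_f$ if and only if $e_p \mid f$ by Lemmas~\ref{lem:divpsi} and~\ref{lem:cyclo}, any factor $\Psi_i$ in the denominator with $e_p \nmid i$ is $p$-coprime, and so is the corresponding $\Psi_{T \pm i}$ in the numerator (as $e_p \mid T$); these match with trivial $p$-part. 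For $e_p \mid i$ in the relevant range, Lemma~\ref{lem:nepeqquantities}(i)--(ii) yields $(T \pm i)_p = i_p$, and Corollary~\ref{coro:psipparts} converts this into $(\Psi_{T \pm i})_p = (\Psi_i)_p$, giving the desired cancellation of $p$-parts. The exact same argument with $(R, e_q)$ in place of $(T, e_p)$ controls the $q$-part, the individual case conditions ($b < e_q$ in Cases~I--IIb; $b < 2e_q$ combined with $e_q \nmid m - 1$ in Case~IIbb; and the bounds $b' < e_q$ or $b' < e_q + 3r < e_q^2$ from Lemma~\ref{lem:nepeqquantities2}(iv)--(v), where $e_q \geq 3$ is essential) ensuring that the only $e_q$-multiples in the relevant range satisfy $(R \pm i)_q = i_q$.

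The main obstacle I expect is the degree bookkeeping: whereas the symmetric group proof only needs to compare $p$-parts of ordinary integers, one must here compare $p$-parts of polynomial values $\Psi_f(\eps\QQ)$. The $e_p$-$p$-adic setup is specifically designed so that $T$ is automatically divisible by $e_p$, which is precisely the hypothesis required to invoke Corollary~\ref{coro:psipparts}; once this translation is set up, the case distinction and combinatorial structure of Proposition~\ref{pqSn} transfer to type~$\type{A}$ essentially verbatim, producing a non-trivial unipotent character of $G$ in $\irrprime{p}{\blprinc{p}{G}} \cap \irrprime{q}{\blprinc{q}{G}}$.
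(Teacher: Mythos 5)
Your overall strategy matches the paper's: replace the $p$-adic setup of Section~\ref{padicsetup} by the $e_p$-$p$-adic setup of Section~\ref{eppadicsetup}, reuse the almost hook partitions from Table~\ref{tab:partitionsforSn} with $R = m e_q$ and $R' = (m-1)e_q$, check $\{p,q\}$-principality via Lemma~\ref{lem:almosthookpartitionpprincipaltypeA}, and convert integer $p$-part comparisons into $\Psi_f$-arithmetic via Corollary~\ref{coro:psipparts}. However, the mechanical rule ``replace $q$ by $e_q$'' breaks down in the case conditions, and those errors sink the $q$-part analysis as you have written it.

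First, Cases~IIbb versus~III must be separated by $q \nmid (m-1)$ versus $q \mid (m-1)$, \emph{not} by $e_q \nmid (m-1)$ versus $e_q \mid (m-1)$. In Case~IIbb the only relevant index is $k = b = e_q$, and one compares the $q$-parts of $R - b = (m-1)e_q$ and $b = e_q$; since $e_q < q$ the $q$-part of $e_q$ is $1$, so what is actually needed is $q \nmid (m-1)e_q$, i.e.\ $q \nmid (m-1)$. Your condition $e_q \nmid (m-1)$ is independent of this: e.g.\ $q=7$, $e_q=3$, $m-1=7$ satisfies $e_q \nmid (m-1)$ yet $(R-b)_q = 7 \neq 1 = b_q$, so Corollary~\ref{coro:psipparts} would give a nontrivial $q$-part. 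Conversely, with $q\nmid (m-1)$ but $e_q\mid (m-1)$ your version would wrongly route the argument into Case~III, where the crucial fact $q\mid R'$ fails.

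Second, the bounds on $b'$ are off. In Case~IIIa, Lemma~\ref{lem:nepeqquantities2}(iv) gives $b'=b<r'=e_q+r<2e_q$, not $b'<e_q$ (indeed $b'=b\geq e_q$ by the definition of Case~III). In Case~IIIb, Lemma~\ref{lem:nepeqquantities2}(v) gives $b'<e_q+3r$, and the estimate you want is $b'<qe_q$ (so no $k\leq b'$ is divisible by both $e_q$ and $q$), which the paper obtains via $e_q+3r<4e_q\leq qe_q$ using $q\geq 5$ (a consequence of $r\geq 2$ and $e_q<q$). Your claimed inequality $e_q+3r<e_q^2$, justified only by $e_q\geq 3$, fails already for $e_q=3$, $r=2$. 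With those two corrections — keep $q$ in the divisibility condition on $m-1$, and bound $b'$ against $qe_q$ using $q\geq 5$ rather than against $e_q^2$ — your plan coincides with the paper's proof.
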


\begin{proof}%
	Let $e_p= \ord_p(\eps\QQ)$ and $e_q=\ord_q(\eps \QQ)$ as defined before. Note that by the order formula (\cite[Table 24.1]{MT}) the condition $pq$ divides the group order $\abs G$ is equivalent to~$e_p\leq n$ and $e_q\leq n$.
	Thus, it is possible to write $n=w e_p+s=m e_q+r$ with positive integers $w,m\geq 1$ and non-negative integers $r,s$ such that $0\leq r<e_q<q$ and $0\leq s<e_p<p$. %
	Let $R=me_q$ and $R'=(m-1)e_q$ (if $m\neq 1$).

	Remember that we have to find a partition $\lambda \neq (n)$ of~$n$ which is $\{p,q\}$-principal such that its degree $d^\lambda$ is coprime to both $p$ and $q$. Then, the partition $\lambda$ gives rise to a non-trivial unipotent character $\chi^\lambda$ of~$G$ lying in $\irrprime p {\blprinc{p}{G}} \cap \irrprime q {\blprinc{q}{G}}$.

	Without loss of generality, we may assume {$r\geq s$} (or interchange $p$ and $q$).
	Further, we can assume that {$r\geq 2$} since for $r\leq 1$ (in particular $s\leq 1$), the partition $(1^n)$ (labeling the Steinberg character) is $\{p,q\}$-principal by Lemma~\ref{lem:almosthookpartitionpprincipaltypeA}. Further, its degree is just a power of the defining characteristic of~$G$, by \cite[Proposition 3.4.10]{GM}, so indeed coprime to the non-defining primes $p$ and $q$.

	Henceforth, we have \fbox{$r\geq \max(2,s)$}. Note that the assumption $r\geq 2$ forces $q\geq 5$ since~$r<e_q<q$.

	We proceed with a case-by-case analysis with cases listed in Table~\ref{tab:casestypeA}, and then give a suitable partition in each of these cases.
	The cases are given in terms of the quantities \linebreak $b$, $T$, $R$, $r$ and $b'$, $T'$, $R'$, $r'$ defined in Section~\ref{eppadicsetup} (taking $x=\QQ$).

	\begin{table}[h] \label{tab:casestypeA}
			\renewcommand{\arraystretch}{1.5}
		\begin{center}
			\begin{tabular}{|p{2cm}l|}
				\hline
				Case I:    & $r=b$.                                    \\
				Case IIa:  & $r>b$.                                    \\
				Case IIb:  & $r<b<e_q$.                                \\
				Case IIbb: & $r<e_q\leq b$ and $q\nmid (m-1)$.         \\
				Case IIIa: & $r<e_q\leq b$, $q\mid (m-1)$ and $r'>b'$. \\
				Case IIIb: & $r<e_q\leq b$, $q\mid (m-1)$ and $r'<b'$. \\
				\hline
			\end{tabular}
		\end{center}
		\renewcommand{\arraystretch}{1}
		\caption{Case distinction for $G=\SL n {\eps \QQ}$ and $r\geq\max(2,s)$.}
	\end{table}

	We first check that the quantities $b'$, $T'$ are defined in Cases~IIIa--b. In these cases we have $m\neq 1$ (since~$n=b+T>e_q+r$ as $T>r$, and $b\geq e_q$ by definition of Cases~IIIa--b), thus $R'=(m-1)e_q\neq 0$. In particular, the condition that $m-1$ is a multiple of $q$ implies that $m\geq 3$, thus $n=me_q+r\geq 3e_q+r> 2(e_q+r)=2r'$. Hence, the assumption of Lemma~\ref{lem:nepeqquantities2} is met, and $b'$ and $T'$ are indeed well-defined in Cases~IIIa--b.\newline\newline
	Moreover, the given cases cover all possible cases since~$b'=r'$ is impossible, again by Lemma~\ref{lem:nepeqquantities2}. 
	Note that in Case I, we have $T=n-b=n-r=me_q=R$, whereas in Case~IIa $T>R$ and in the remaining four cases $T<R$ holds (as well as~$T'>R'$ in Case~IIIa and $T'<R'$ in Case~IIIb).

	Now, we give a partition in each case, see Table \ref{tab:partitionsfortypeA} below. %
	Remark that $$R-b=n-r-b=T-r>0$$ (as~$T>r$), thus we have $R>b$. Similarly, we have $R'>b'$.
	These facts (together with the definition of each case) imply that the partitions are well-defined in each case, and of size $n$.
	\begin{table}[h]
		\begin{center}
			\begin{tabular}{lcl}
				\toprule
				\multirow{2}{*}{Case} & Main condition & \multirow{2}{*}{Partition} \\
				                      & in this case   &                            \\
				\midrule
				I                     & $r=b$          & $(1^{R},r)$                \\
				IIa                   & $r>b$          & $(1^{R-b-1}, b+1,r)$       \\
				IIb--bb               & $r<b$          & $(1^{R-b-1}, r+1,b)$       \\
				IIIa                  & $r'>b'$        & $(1^{R'-b'-1},b'+1,r')$    \\
				IIIb                  & $r'<b'$        & $(1^{R'-b'-1}, r'+1, b')$  \\
				\bottomrule
			\end{tabular}
		\end{center}%
		\caption{Partitions for non-trivial unipotent characters in \linebreak
			$\irrprime p {\blprinc p G} \cap \irrprime q {\blprinc q G}$ where $G=\SL n{\eps \QQ}$ %
			and $\gcd(pq,\QQ)=1$ with $p\neq q$ and $e_p, e_q\leq n$,
			assuming $r\geq \max(2,s)$.} %
		\label{tab:partitionsfortypeA}
	\end{table}

	\noindent Finally, note that the case distinction and partitions are essentially the same as for the symmetric group $\sym n$, refer to proof of Proposition~\ref{pqSn}, but with $q$ replaced by~$e_q$ in the defining inequalities %
	and all quantities, like $m$, $r$, $b$, and $r'$, $b'$, defined slightly different. As before, the partitions are all almost hooks.

	Now, let $\lambda\vdash n$ be one of the partitions given in Table \ref{tab:partitionsfortypeA}, and we claim that it has the desired properties.

	\paragraph*{\textbf{Block analysis}}
	We first explain why the given partition is $\{p,q\}$-principal.
	By definition of~$T$ and $R$, $e_p$ divides~$n-b=T$ and $e_q$ divides~$n-r=R$. Similarly, we have $e_p$ dividing~$n-b'$ and $e_q$ dividing~$n-r'$. Thus, Lemma~\ref{lem:almosthookpartitionpprincipaltypeA} yields that $\lambda$ is $p$- as well as~$q$-principal in each case. In Case~I, we additionally use the condition $r=b$.

	\paragraph*{\textbf{Degree analysis}}
	To determine the $p$- and $q$-part of the degree $d^\lambda$ we use Lemma~\ref{lem:degalmosthookpartitiontypeA2A}.
	Note that, since~$p$ and $q$ are not equal to the defining characteristic of~$G$, we can focus on the part of~$d^\lambda$ coprime to~$\QQ$.
	Table \ref{tab:pmdegtypeA2A} lists the $\QQ'$-part of the degree $d^\lambda$ (up to a possible sign if $\eps=-1$), obtained from Lemma~\ref{lem:degalmosthookpartitiontypeA2A}, in all cases.
	\begin{table}[t]
		\begin{center}
			\begin{tabular}{lccc}
				\toprule
				Case    & \multicolumn{3}{c}{$\pm(d^\lambda)_{\QQ'}$}                                                                                                          \\
				\cmidrule(lr){1-1} \cmidrule(lr){2-4}
				\addlinespace[0.5 em]
				I%
				        & $\frac{\prod_{k=1}^{b-1} \Psi_{T+k} }{\prod\limits_{k=1}^{b-1} \Psi_{k}}$     & $=$ & $\frac{ \prod\limits_{k=1}^{r-1} \Psi_{R+k} }{ \prod\limits_{k=1}^{r-1} \Psi_{k} }$ \\
				\addlinespace[0.5 em]
				IIa     & $\frac{\prod\limits_{k=1}^{b} \Psi_{T+k} \cdot \prod\limits_{\substack{k=1                                                                                  \\ k\neq r-b}}^{r} \Psi_{T-k}}{\prod\limits_{k=1}^{b} \Psi_{k} \cdot \prod\limits_{\substack{k=1 \\ k\neq r-b}}^{r} \Psi_{k}}$ &
				$=$     & $ \frac{\prod\limits_{\substack{k=1                                                                                                                  \\k\neq r-b}}^{r} \Psi_{R+k} \cdot \prod\limits_{k=1}^{b} \Psi_{R-k} }{ \prod\limits_{\substack{k=1\\k\neq r-b}}^{r} \Psi_{k} \cdot \prod\limits_{k=1}^b \Psi_{k}}$\\
				\addlinespace[0.5 em]
				IIb--bb &
				$\frac{\prod\limits_{\substack{k=1                                                                                                                             \\ k\neq b-r}}^{b} \Psi_{T+k} \cdot \prod\limits_{k=1}^{r} \Psi_{T-k}  }{\prod\limits_{\substack{k=1 \\ k\neq b-r}}^{b} \Psi_{k} \cdot \prod\limits_{k=1}^{r} \Psi_{k}}$ &
				$=$     & $\frac{\prod\limits_{k=1}^{r} \Psi_{R+k} \cdot \prod\limits_{\substack{k=1                                                                                  \\ k \neq b-r}}^{b} \Psi_{R-k}}{ \prod\limits_{k=1}^{r} \Psi_{k} \cdot \prod\limits_{\substack{k=1 \\ k\neq b-r}}^b \Psi_{k}}$ \\
				\addlinespace[0.5 em]
				IIIa    & $\frac{\prod\limits_{k=1}^{b'} \Psi_{T'+k}\cdot\prod\limits_{\substack{k=1                                                                                  \\k\neq r'-b'}}^{r'} \Psi_{T'-k} }{\prod\limits_{k=1}^{b'} \Psi_{k} \cdot \prod\limits_{\substack{k=1 \\k\neq r'-b'}}^{r'} \Psi_{k}}$ &
				$=$     & $\frac{\prod\limits_{\substack{k=1                                                                                                                   \\k\neq r'-b'}}^{r'} \Psi_{R'+k}  \cdot \prod\limits_{k=1}^{b'} \Psi_{R'-k} }{\prod\limits_{\substack{k=1 \\k\neq r'-b'}}^{r'} \Psi_{k} \cdot \prod\limits_{k=1}^{b'} \Psi_{k} }
				$                                                                                                                                                              \\
				\addlinespace[0.5 em]
				IIIb    & $\frac{\prod\limits_{\substack{k=1                                                                                                                   \\k\neq b'-r'}}^{b'} \Psi_{T'+k} \cdot \prod\limits_{k=1}^{r'} \Psi_{T'-k} }{\prod\limits_{\substack{k=1 \\k\neq b'-r'}}^{b'} \Psi_{k} \cdot \prod\limits_{k=1}^{r'} \Psi_k} $ &
				$=$     & $\frac{ \prod\limits_{k=1}^{r'} \Psi_{R'+k} \cdot \prod\limits_{\substack{k=1                                                                               \\k\neq b'-r'}}^{b'} \Psi_{R'-k}  }{\prod\limits_{k=1}^{r'} \Psi_{k} \cdot \prod\limits_{\substack{k=1 \\k\neq b'-r'}}^{b'} \Psi_{k}}$ \\
				\addlinespace[0.5 em]
				\bottomrule
			\end{tabular}
		\end{center}
		\caption{$\QQ'$-part $(d^\lambda)_{\QQ'}$ of the degree $d^\lambda$ up to sign.}
		\label{tab:pmdegtypeA2A}
	\end{table}

	First note that, by Lemma~\ref{lem:divpsi}~(i), only factors $\Psi_{\ell}$ with $\ell$ divisible by~$e_p$ (resp. $e_q$) can contribute non-trivially to the $p$-part (resp. $q$-part) of~$\pm (d^\lambda)_{\PP'}$.
	Let $D\in\{T,T',R,R'\}$. Consequently, we only have to compare $p$-parts (resp. $q$-parts) of a factor $\Psi_{D\pm k}$ (in the numerator of~$\pm (d^\lambda)_{\PP'}$) and $\Psi_k$ (in the denominator) for fixed $D\geq 1$ and distinct $k$. If we have $(D\pm k)_p=k_p$ (resp. $(D\pm k)_q=k_q$), Corollary~\ref{coro:psieppparts}~(i) yields that $d^\lambda$ is coprime to~$p$ (resp. $q$). Again, only factors $\Psi_k$ with $k$ divisible by~$e_p$ (resp. $e_q$) matter remembering that $e_p$ divides~$T$ and $T'$ (resp. $q$ divides~$R$ and in Cases~IIIa--b also $R'$).

	\subparagraph*{\textit{$p$-part of the degree~$d^\lambda$}}
	Here, we simply consider the left-hand side expression for $\pm (d^\lambda)_{\PP'}$ given in Table \ref{tab:partitionsfortypeA}, and use the observations~(i) and~(ii) in Lemma~\ref{lem:nepeqquantities} (resp. Lemma~\ref{lem:nepeqquantities2}). %
	By Lemma~\ref{lem:nepeqquantities}~(i)--(ii), we have $(T+k)_p=k_p$, for all $1\leq k\leq b$ divisible by~$e_p$, and $(T-k)_p=k_p$ for all $1\leq k\leq r$ divisible by~$e_p$. This implies that the degree $d^\lambda$ is not divisible by~$p$ in Cases~I--IIbb using Corollary~\ref{coro:psieppparts}.
	In Cases~IIIa--b we use analogous observations in Lemma~\ref{lem:npqquantities2}~(i)--(ii) to conclude that the degree $d^\lambda$ is not divisible by~$p$.

	\subparagraph*{\textit{$q$-part of the degree~$d^\lambda$}}
	For the determination of the $q$-part, consider the right-hand side expression in Table \ref{tab:partitionsfortypeA}. Now, the defining condition of the particular case becomes relevant.
	As said before, we only need to look at indices $k$ divisible by~$e_q$.

	In Cases~I--IIa, there is no such index $k$ %
	since we have $r,b< e_q$, so we are done by Lemma~\ref{lem:divpsi}~(i). %

	In Case~IIbb, we have $r<e_q\leq b$. Recall that we have $b<2r<2e_q$ by Lemma~\ref{lem:nepeqquantities}~(iii). %
	Hence, there is no relevant index $k$ (divisible by~$e_q$) unless $k=b=e_q$. However, $R-b=(m-1)e_q$ and $b=e_q$ both have trivial %
	$q$-part as~$q$ does not divide~$m-1$ by assumption. %
	Thus, Corollary~\ref{coro:psipparts}~(i) yields that the factors $\Psi_{R-b}$ and $\Psi_b$ have the same $q$-part, and we conclude that the degree $d^\lambda$ is coprime to~$q$.

	In Cases~IIIa and IIIb, $R'=(m-1)e_q$ is divisible by~$q$ (since~$m-1$ is by assumption). Thus, the prime $q$ divides an index $k$ if and only if $q$ divides~$R'\pm k$.
	So, by Corollary~\ref{coro:psipparts}~(i), only indices $k$ divisible by ($e_q$ and) $q$ matter.
	Again, there is no relevant index $k$ (divisible by~$qe_q$) unless possibly in Case~IIIb since in Case~IIIa we have $b'<r'=e_q+r<qe_q$ by Lemma~\ref{lem:nepeqquantities2}~(iv).
	In Case~IIIb we have $r'<b'<e_q+3r$ by Lemma~\ref{lem:nepeqquantities2}~(v) and thus $b'<e_q+3r<qe_q$ using $r<e_q<q$ and $q\geq 5$.%
	Hence, there is no index $k\leq b'$ %
	divisible by~$q e_q$, and we conclude that $d^\lambda$ is coprime to~$q$ in all cases.

	All in all, the partition $\lambda\vdash n$ labels a non-trivial unipotent character $\chi^\lambda\in \irr G$ lying in the intersection $\irrprime p {\blprinc{p}{G}} \cap \irrprime q {\blprinc{q}{G}}$.
\end{proof}

We conclude that Theorem~\ref{thm:A} is true for any finite simple group of type $\type{A}_{n-1}$ or $\tw{2}{\type{A}_{n-1}}$.
\begin{corollary}\label{coro:pqtypeA}
	Let $S=\PSL{n}{\eps \QQ}$ with $n\geq 2$ and $\eps\in\{\pm 1\}$. Let $p$ and $q$ be distinct primes such that $p$ and $q$ do not divide~$\QQ$, and $pq$ divides~$\abs S$.
	Then, we have $$\irrprime p {B_p(S)} \cap \irrprime q {B_q(S)}\neq \{\mathbbm{1}_S\}.$$
\end{corollary}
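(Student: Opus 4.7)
The plan is to deduce the corollary directly from Proposition~\ref{pqtypeA} applied to the simply connected cover $G = \SL n {\eps\QQ}$, of which $S = \PSL n {\eps\QQ}$ is the central quotient $G/\zent G$. The connecting tissue is the deflation map $\Uch(G) \to \irr S$, $\chi \mapsto \bar\chi$, recalled in Section~\ref{sec:classicaltypes}: it is degree-preserving and, for non-defining primes, it is block-compatible in the sense that two unipotent characters of $G$ lie in the same $\ell$-block of $G$ if and only if their deflations lie in the same $\ell$-block of $S$.

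First I would verify that the hypotheses of Proposition~\ref{pqtypeA} carry over from $S$ to $G$: since $|G| = |\zent G| \cdot |S|$, the assumption $pq \mid |S|$ forces $pq \mid |G|$, and $p$, $q$ remain non-defining primes. Proposition~\ref{pqtypeA} then supplies a non-trivial unipotent character $\chi^\lambda \in \Uch(G)$, with $\lambda \neq (n)$, lying in $\irrprime p {\blprinc p G} \cap \irrprime q {\blprinc q G}$.

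Next I would deflate $\chi^\lambda$ to the corresponding character $\bar{\chi^\lambda} \in \irr S$ and check three things, none of which should present any real difficulty: the deflation is non-trivial (immediate from $\bar{\chi^\lambda}(1) = \chi^\lambda(1) = d^\lambda > 1$, since $\lambda \neq (n)$); the degree remains coprime to $pq$ (same identity); and $\bar{\chi^\lambda}$ lies in both $\blprinc p S$ and $\blprinc q S$. For the last point I would apply the block-compatibility statement above (see \cite[Lemma~17.2]{CE}) to the pair $(\chi^\lambda, \mathbbm{1}_G)$ for $\ell \in \{p,q\}$, using that $\mathbbm{1}_G$ deflates to $\mathbbm{1}_S$.

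There is no substantial obstacle here, since all the real work (the combinatorial case analysis, the degree computations via $\Psi_f$, and the block labels via $e_p$-cores) has already been carried out at the level of $G$ in Proposition~\ref{pqtypeA}. The corollary is essentially a bookkeeping step passing from a finite group of Lie type to its simple quotient, and it is included separately only because Theorem~\ref{thm:A} is stated in terms of the simple group $S$.
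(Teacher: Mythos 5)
Your proposal is correct and follows the same route as the paper: both apply Proposition~\ref{pqtypeA} to $G=\SL n {\eps\QQ}$ and pass to $S=G/\zent G$ via the degree-preserving, block-compatible deflation $\Uch(G)\to\irr S$ discussed at the beginning of Section~\ref{sec:classicaltypes}. The only difference is that the paper simply cites that discussion, whereas you spell out the three verifications (non-triviality via $d^\lambda>1$, degree preservation, block compatibility via \cite[Lemma~17.2]{CE}); this is a harmless elaboration, not a different argument.
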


\begin{proof}
	Note that $S=G/\zent G$ with $G=\SL n {\eps \QQ}$ and that the assumption implies that $p$ and $q$ divide $\abs S$.
	By Proposition~\ref{pqtypeA}, we have $\irrprime p {\blprinc p G} \cap \irrprime q {\blprinc q G}\neq \{\mathbbm{1}_G\}$ and the analogous result for $S$ follows by the discussion at the beginning of Section~\ref{sec:classicaltypes}.
	Note that the whole reasoning there does not require the group $S$ to be simple, just of the form $S=G/\zent G$.
\end{proof}
In conclusion, in Corollary~\ref{coro:B}, we reprove \cite[Proposition 3.7]{NRS} for $\PSL n {\eps \QQ}$ and  $2\in \{p,q\}$ and extend this result to odd primes.

\subsection{Proof of Theorem~\texorpdfstring{\ref{thm:A}}{A} in types B and C}\label{sec:prooftypesBC}
Let $G={\GG}_{sc}^F$ where $\GG_{sc}$ is a simple simply connected group defined over $\overline{\F_\QQ}$, of type $\type{B}_n$ or $\type{C}_n$ for some $n\geq 2$, and $G$ is defined over $\F_\QQ$.
The aim of this section is to prove Conjecture (NRS) for $G$ and $G/\zent G$, thus proving Theorem~\ref{thm:A} for $G/\zent G$.

By work of Lusztig \cite[8.2. Theorem]{LusztigSymbols}, the unipotent characters of~$G$ are parame\-trized by so called \emph{Lusztig symbols} (of odd defect and rank $n$) which also encode the character degree and determine if two unipotent characters belong to the same $p$-block for a non-defining prime $p$. Let $e_p\coloneqq \ord_p(\QQ^2)$ and recall the notation 
$$\Psi_f^{\pm}\coloneqq \Psi_f^{\pm}(\QQ)\coloneqq \QQ^f\pm1 \text{ for any }f\geq 1,$$ introduced in Section~\ref{sec:degreeanalysis}.

We first recall the definition of symbols and associated objects, see, e.g., \cite[Section~5]{OlssonCRT} or \cite[Chapter 4.4]{GM}.

A \emph{symbol} $\Lambda={X \choose Y} \coloneqq {{x_1,\dots,x_k}\choose{y_1,\dots,y_l}}$ consists of two ordered integer sequences $X=\{x_1,\dots,x_k\}$ and $Y=\{y_1,\dots,y_l\}$ such that $0\leq x_1<\ldots < x_k$ and $0\leq y_1< \ldots < y_l$.
The \emph{defect} of~$\Lambda$ is $d(\Lambda)\coloneqq \abs{k-l}\geq 0$, while its \emph{rank} is $\mathrm{rk}({\Lambda})\coloneqq \sum_{i=1}^k x_i +\sum_{i=1}^l y_l - \floor{\left(\frac{k+l-1}{2}\right)^2}\geq 0,$
where $\floor{x}\coloneqq \max_{n\in \Z}\{n\leq x\}$ denotes the value of the floor function at a real number $x\in \R$.\newline
We make the following identifications:
We identify $\Lambda={{x_1,\dots,x_k}\choose{y_1,\dots,y_l}}$ with the symbol ${y_1,\dots,y_l} \choose {x_1,\dots,x_k}$. Moreover, if $x_1=y_1=0$, we identify $\Lambda$ with ${{x_2-1,\dots,x_k-1}\choose{y_2-1,\dots,y_l-1}}$. Henceforth, we implicitly work with equivalence classes of symbols instead of symbols themselves. Note that rank and defect are constant on such classes. We usually use the conventions $(x_1,y_1)\neq (0,0)$ and $k\geq l$ for fixing a representative of a class of symbols. If $l=0$, we often write $\Lambda={{x_1,\dots,x_k}\choose\emptyset}$.

Given a positive integer $e\geq 1$ and a symbol $\Lambda={X\choose Y}$, there is a unique symbol associated to $\Lambda$ without so-called \emph{$e$-hooks} and one without \emph{$e$-cohooks} (each of them unique under the conventions made before).%
\newline
An \emph{$e$-hook} is an entry $x\in X$ (or $y\in Y$) such that $x\geq e$ and $x-e\notin X$ (or $y\geq e$ and $y-e\notin Y$). We also call an $e$-hook \emph{a hook of length $e$}.
Removing the $e$-hook $x\in X$ (resp. $y\in Y$) we obtain a new symbol defined by~$(X\setminus\{x\})\cup\{x-e\}$ and $Y$ (resp. $X$ and $(Y\setminus \{y\}) \cup \{y-e\}$).
Successively removing as many as possible $e$-hooks from $\Lambda$ yields a unique symbol without $e$-hooks, %
called the \emph{$e$-core} of~$\Lambda$. Notice that the resulting symbol is independent of the order of removal operations.%
\newline
Similarly, an \emph{$e$-cohook} (or cohook of length $e$) is an entry $x\in X$ (or $y\in Y$) such that $x\geq e$ and $x-e\notin Y$ (or $y\geq e$ and $y-e\notin X$) and removing the $e$-hook $x\in X$ (resp. $y\in Y$) yields the symbol defined by~$(X\setminus\{x\})$ and $Y\cup\{x-e\}$ (resp. $X\cup\{y-e\}$ and $(Y\setminus \{y\})$).
Then, the \emph{$e$-cocore} of~$\Lambda$ is the unique symbol obtained from $\Lambda$ by successive removal of all $e$-cohooks which is again well-defined and unique.

Coming back to our group $G$, the unipotent characters of~$G$ are parametrized by (classes of) symbols of odd defect and rank $n$,  \cite[Theorem 4.4.13]{GM}, where the trivial character~$\mathbbm{1}_G\in \irr G$ corresponds to the symbol~${n\choose \emptyset}$, henceforth called \emph{trivial symbol}. %
For any such symbol $\Lambda$, let $\chi^\Lambda\in \Uch(G)$ denote the corresponding unipotent character. We say that $\Lambda$ has \emph{degree} $d^\Lambda\coloneqq \chi^\Lambda(1)$. The hook and cohook lengths of~$\Lambda$ determine the degree of~$\chi^\Lambda$ (thus $d^\Lambda$), see, e.g., \cite[Proposition 4.4.17]{GM}.\\
Given a non-defining prime~$p$, let $e_p\coloneqq \ord_p(\QQ^2)$. The $p$-block of the character $\chi^\Lambda$ is determined by the symbol $\Lambda$ as follows: 
If $p=2$ (then $\QQ$ is odd), every unipotent character lies in the principal $2$-block by \cite[Theorem 13]{CE93}.\newline
If $p$ is odd, $p$ is either linear for $\QQ$ (i.e., $p$ divides $\Psi_{e_p}^{-}$) or unitary for $\QQ$ (i.e.,~$p$ divides~$\Psi_{e_p}^{+}$). If $p$ is linear (resp. unitary) for $\QQ$, the $e_p$-core (resp. $e_p$-cocore) of~$\Lambda$ decides to which \linebreak $p$-block the character $\chi^{\Lambda}$ belongs.
If $\Lambda'$ is another symbol of odd defect and rank $n$, then the characters $\chi^\Lambda$ and $\chi^{\Lambda'}$ lie in the same $p$-block of~$G$ if and only if their $e_p$-cores (resp. $e_p$-cocores) agree and $p$ is linear (resp. unitary). This follows from \cite[Theorem 4.4]{CE94} together with Brou\'{e}--Malle--Michel's classification of unipotent $e$-cuspidal pairs in \cite{BMM}, for large $p$ see also \cite[5.24 Theorem]{BMM}. %
Originally, for $p\neq 2$ and $\QQ$ being odd, the labeling of all the $p$-blocks of particular ``types'' of groups related to $G$ (i.e., conformal symplectic groups and orthogonal groups in even dimension) is due to Fong--Srinivasan, see \cite[(10B), (12A), (11E), (13B)]{FSclassicalgroups}. Combined with \cite[Proposition 11]{CE93}, their results can be transferred to $G$.\newline
From the above characterization, we see that the character $\chi^{\Lambda}$ lies in the principal~$p$-block of~$G$ if the $e_p$-cores (resp. $e_p$-cocores) of~$\Lambda$ and the trivial symbol agree if $p$ is linear (resp. unitary).

Since the $p$-blocks of unipotent characters are fully determined by the underlying combinatorics, we make the following definition.

\begin{definition} Let $p$ be a non-defining prime for $G$.
	A symbol $\Lambda$ of odd defect (and suitable rank) is called \emph{$p$-principal} if the corresponding unipotent character $\chi^\Lambda\in \irr G$ lies in the principal $p$-block of~$G$. \newline %
	If $q$ is a prime distinct from $p$, we say that $\Lambda$ is \emph{$\{p,q\}$-principal} if $\Lambda$ is $p$- as well as $q$-principal.
\end{definition}

Rephrasing what we said before, any symbol $\Lambda$ of odd defect and rank $n$ is $2$-principal. %
If $p$ is odd and $n=we_p+s$ with $0\leq s<e_p$, the symbol $\Lambda$ is $p$-principal if and only if it has the same $e_p$-core (if $p$ is linear) or $e_p$-cocore (if $p$ is unitary) as the trivial symbol~$n \choose \emptyset$, i.e., $e_p$-core or $e_p$-cocore equal to~${{s}\choose \emptyset}$. For example, the symbol  ${0 , 1, \dots, n}\choose{\phantom{0,}1, \dots, n}$ (labeling the Steinberg character) is $p$-principal if~$s=0$.

As a further preparation to prove Conjecture (NRS) for $G$, we investigate special symbols $X\choose Y$ where $X$ and $Y$ correspond to almost hook partitions introduced in Section~\ref{sec:almhooks} (more precisely to the so-called $\beta$-sets of such partitions).

	\begin{table}[h!]
	\begin{center}
		\begin{tabular}{cccc}
			\toprule
			$\Lambda_1$ & $\Lambda_2$                                                                                                           & $\Lambda_3$ & $\Lambda_4$ \\
			\midrule
			${{0,1, 2, \ldots, n-k-\ell-1, n-k}\choose{\phantom{0, }1, 2, \ldots, n-k-\ell-1, n-\ell}}$
			& ${{0, 1, 2, \ldots, n-k-\ell-1, n-\ell}\choose{\phantom{0, }1, 2, \ldots, n-k-\ell-1, n-k}}$
			& ${{\phantom{0, }1, 2, \ldots, n-k-\ell-1, n-\ell, n-k}\choose{0, 1, 2, \ldots, n-k-\ell-1\phantom{, n-k, n-\ell}}}$
			& ${{0, 1, 2, \ldots, n-k-\ell-1, n-\ell, n-k}\choose{\phantom{0, } 1, 2, \ldots, n-k-\ell-1 \phantom{, n-\ell, n-k}}}$                             \\
			\bottomrule
		\end{tabular}
	\end{center}
	\caption{Symbols $\Lambda_1$, $\Lambda_2$ for $k\leq \ell$, and $\Lambda_3$, $\Lambda_4$ for $k<\ell$.}
	\label{tab:specialsymbols}
\end{table}

For the next lemma, the notion of $\QQ'$-part of an integer is extended to the rationals in the obvious way, i.e., for $a$, $b\in \Z$, the $\QQ'$-part of $\frac{a}{b}$ is $(\frac{a}{b})_{\QQ'}\coloneqq \frac{a_{\QQ'}}{b_{\QQ'}}\in \Q$.
\begin{lemma}\label{lem:degspecialsymbolstypesBC}
	Let $n\geq 2$ and $0\leq k\leq \ell < n$, and $\Lambda\in\{\Lambda_j\,|\, 1\leq j \leq 4\}$ be one of the symbols in Table~\ref{tab:specialsymbols}. Then, the symbol $\Lambda$ has rank $n$ and odd defect.
	Moreover, the $\QQ'$-part of the degree $d^{\Lambda})$ of~$\Lambda$ is of the form $(d^\Lambda)_{\QQ'}=C \cdot E_{\QQ'}$, where %
	$$C=\frac{\prod\limits_{i=1}^{k} \Psi_{2(n-k+i)}^{-} \cdot \prod\limits_{\substack{i=1\\ i\neq \ell-k}}^{\ell} \Psi_{2(n-k-i)}^{-} }{\prod\limits_{i=1}^{k} \Psi_{2i}^{-} \cdot \prod\limits_{\substack{i=1 \\ i\neq \ell-k}}^\ell \Psi_{2i}^{-} }= \frac{\prod\limits_{\substack{i=1\\i\neq \ell-k}}^{\ell} \Psi_{2(n-\ell+i)}^{-} \cdot \prod\limits_{i=1}^{k} \Psi_{2(n-\ell-i)}^{-} }{\prod\limits_{\substack{i=1 \\ i\neq \ell-k}}^\ell \Psi_{2i}^{-} \cdot \prod\limits_{i=1}^{k} \Psi_{2i}^{-} }.$$
	If $k=\ell$ \textup(hence $\Lambda=\Lambda_1=\Lambda_2$\textup), we have $E=1$, and if $k<\ell$, $E$ is given in Table \ref{tab:exdeg}.
\end{lemma}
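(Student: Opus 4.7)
The plan is to verify rank and defect by direct counting, then compute the $\QQ'$-part of $d^\Lambda$ using the standard degree formula for unipotent characters of classical groups in terms of the entries of the symbol (see \cite[Proposition 4.4.17]{GM}). Most of the work will be to exploit the highly structured form of the four symbols, so that the bulk of the factors cancel and what remains matches the stated expression for $C$ (with a case-dependent correction $E$).

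First I would check the defect: for $\Lambda_1, \Lambda_2, \Lambda_3$ one row has $n{-}k{-}\ell{+}1$ entries and the other has $n{-}k{-}\ell$, giving defect $1$; for $\Lambda_4$ the sizes are $n{-}k{-}\ell{+}2$ and $n{-}k{-}\ell{-}1$, giving defect $3$. All are odd. The rank is a direct computation: each of the sums $\sum_{x\in X} x + \sum_{y \in Y} y$ reduces to $2\cdot(0+1+\cdots+(n{-}k{-}\ell{-}1)) + (n-k) + (n-\ell)$, possibly with a small adjustment depending on whether $0$ lies in $Y$. Subtracting the normalizing term $\floor{((|X|+|Y|-1)/2)^2}$ yields $n$ in each case.

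For the degree, the formula from \cite[Proposition 4.4.17]{GM} expresses $d^\Lambda$, up to a power of $\QQ$ and a sign, as a ratio of products of factors $\Psi_{2a}^{-}$ where $a$ ranges over sums of pairs of entries in the same row (in the numerator) together with a denominator built from the hook lengths of $\Lambda$. The key observation is that the symbols $\Lambda_j$ are designed to mirror the almost hook partition $(1^{n-k-\ell-1}, k+1, \ell)$: outside of the two distinguished entries $n-k$ and $n-\ell$, the two rows of $\Lambda$ share essentially the same set $\{0,1,\ldots,n-k-\ell-1\}$ (up to the conventions identifying symbols with a leading $0$ in both rows). Consequently, the vast majority of the products cancel, and what survives is precisely the expression for $C$, which parallels the hook formula of Lemma~\ref{lem:degalmosthookpartition} under the substitution $i \mapsto \Psi_{2i}^{-}$. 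This is no accident: the hook lengths of $\lambda$ match the doubled entries $2a$ appearing for these symbols, confirming the shape of $C$.

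The factor $E$ then records the residual contribution coming from how the special entries $n-k$ and $n-\ell$ are distributed across the rows of $\Lambda$. When $k=\ell$, the symbols $\Lambda_1$ and $\Lambda_2$ coincide and the special entries collapse into the regular set, forcing $E=1$. For $k<\ell$, the four placements (one each in opposite rows for $\Lambda_1, \Lambda_2$, both in the top row for $\Lambda_3, \Lambda_4$, with the bottom row starting at $0$ or $1$ respectively) yield four different residual factors, which should match Table~\ref{tab:exdeg}. The main obstacle is purely bookkeeping: one must correctly track, in each of the four cases, which pairs of entries contribute to hook versus cohook factors and to same-row versus cross-row products, in particular near the two special entries. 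Each individual case is a routine but careful computation that I would expand in parallel to keep the cancellations transparent and extract the precise form of $E$.
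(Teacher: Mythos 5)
Your high-level plan coincides with the paper's: verify rank and defect by direct computation, then use the degree formula from \cite[Proposition 4.4.17]{GM} and exploit that the two rows of each $\Lambda_j$ differ from the common core $\{0,1,\ldots,n-k-\ell-1\}$ only at the special entries $n-k$ and $n-\ell$, so that almost everything cancels. However, there are two genuine problems with what you have written.

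First, you misstate the degree formula. The numerator is a fixed product $\prod_{i=1}^n \Psi^-_{2i}$ depending only on $n$, not ``products of $\Psi^-_{2a}$ where $a$ ranges over sums of pairs of entries in the same row''; and crucially, the denominator is built from \emph{both} hook lengths (contributing factors $\Psi^-_h$) \emph{and} cohook lengths (contributing factors $\Psi^+_c$), together with a power of $2$. Omitting the cohook contribution makes your outlined cancellation impossible to carry out, because the shape of $E$ --- and the appearance of both $\Psi^+$ and $\Psi^-$ in Table~\ref{tab:exdeg} --- comes precisely from the interplay of hooks and cohooks.

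Second, and relatedly, you defer all of the actual work to ``routine but careful bookkeeping,'' but that bookkeeping is where the entire content of the lemma lies, and there is a specific structural step you do not mention and would need: one splits the denominator according to $\mathcal{H}\cap\mathcal{C}$ versus the symmetric differences $\mathcal{H}\setminus\mathcal{C}$ and $\mathcal{C}\setminus\mathcal{H}$, and uses the factorization $\Psi^-_h\Psi^+_h=\Psi^-_{2h}$ for every $h\in\mathcal{H}\cap\mathcal{C}$. For the four symbols $\Lambda_j$ the set $\mathcal{H}\cap\mathcal{C}$ is exactly the multiset of hook lengths of the almost hook partition $(1^{n-k-\ell-1},k+1,\ell)$, which is what turns the canonical part into $C$ (after the same index-shift manipulation as in Lemma~\ref{lem:degalmosthookpartition}); the exceptional part $E$ is then read off from the few elements of $\mathcal{H}\setminus\mathcal{C}$ and $\mathcal{C}\setminus\mathcal{H}$ (together with $2^{b'(\Lambda)}$), and one must actually enumerate these small sets for each $\Lambda_j$ to obtain Table~\ref{tab:exdeg}. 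Without the correct formula and the $\Psi^-_h\Psi^+_h=\Psi^-_{2h}$ device, the ``bookkeeping'' you postpone cannot be completed as described.
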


	\begin{table}[h]
	\begin{center}
		\begin{tabular}{cccc}
			\toprule
			 $\Lambda_1$  & $\Lambda_2$ & $\Lambda_3$ & $\Lambda_4$ \\
			\midrule
			 $\frac{\Psi_{K}^{-} \cdot \Psi_{L}^{+}}{ \Psi_{{\ell-k}}^{-} \cdot 2}$ &
			 $\frac{\Psi_{K}^{+} \cdot \Psi_{L}^{-}}{ \Psi_{{\ell-k}}^{-} \cdot 2}$&
			 $\frac{\Psi_{K}^{+} \cdot \Psi_{L}^{+}}{ \Psi_{{\ell-k}}^{+} \cdot 2}$&
			 $\frac{\Psi_{K}^{-} \cdot \Psi_{L}^{-}}{ \Psi_{{\ell-k}}^{+} \cdot 2}$                                           \\
			\bottomrule
		\end{tabular}
	\end{center}
	\caption{Exceptional part $E$ of the degree $d^{\Lambda_i}$ of symbols $\Lambda_i$, $1\leq i\leq 4$, assuming $k<\ell$.}
	\label{tab:exdeg}
\end{table}

We refer to the rational numbers $C$ and $E$ in the previous lemma as \emph{canonical} and \emph{exceptional} part of the degree $d^\Lambda$. %

\begin{proof}[Proof of Lemma~\ref{lem:degspecialsymbolstypesBC}]
	By direct computation, we see that the rank of~$\Lambda$ is indeed $n$ and its defect $d$ is odd (indeed either $d=1$ or $d=3$).

	If $\Lambda = {{X}\choose{Y}}$ is an arbitrary symbol of rank $n$, the unipotent character (of~$G=\GG^F$ in type $\type{B}_n$ or $\type{C}_n$) labeled by~$\Lambda$ has degree
	\begin{align*}\tag{a} \label{eq:deg}
		d^\Lambda=\QQ^{a} \cdot \frac{1}{2^{b'(\Lambda)}} \cdot \frac{\prod\limits_{i=1}^{n} \Psi_{2i}^{-} }{ \prod\limits_{h\in \mathcal{H}}\Psi_{h}^{-} \prod\limits_{c\in \mathcal{C}} \Psi_{c}^{+}}
		=
		\QQ^a \cdot \frac{\prod\limits_{i=1}^{n} \Psi_{2i}^{-}}{ \prod\limits_{x\in \mathcal{H}\cap  \mathcal{C}}\Psi_{2x}^{-} \cdot 2^{b'(\Lambda)} \cdot \prod\limits_{h\in \mathcal{H}\setminus\mathcal{C}} \Psi_{h}^{-} \cdot \prod\limits_{c\in \mathcal{C}\setminus\mathcal{H}} \Psi_{c}^{+}},
	\end{align*}
	for some non-negative integer $a\geq 0$, and $b'(\Lambda)=\floor{\frac{\abs{X}+\abs{Y}-1}{2}}-\abs{X\cap Y}$ \linebreak (see \cite[Proposition 4.4.17]{GM} together with \cite[Table 1.3]{GM}). Here, the first and the second product in the denominator run over the multisets $\mathcal H$ and $\mathcal{C}$ of hook and cohook lengths of~$\Lambda$, respectively.

	Now, let $\Lambda\in \{\Lambda_j\,|\, 1\leq j\leq 4\}$ and set $L\coloneqq n-\ell$ and $K\coloneqq n-k$. By Equation~\eqref{eq:deg}, the $\QQ'$-part of the degree $d^\Lambda$ is of the form $(d^\Lambda)_{\QQ'}=C\cdot E_{\QQ'}$ with possibly rational 
	\begin{align*}\tag{b} \label{eq:CandE}
		C\coloneqq
		\frac{
			\prod\limits_{\substack{k=1\\k\notin \mathcal{I} } }^n \Psi_{2k}^{-}}{ \prod\limits_{x\in \mathcal{H}\cap\mathcal{C}} \Psi_{2x}^{-} },\,\,\,  E \coloneqq \frac{\prod\limits_{i\in \mathcal{I}} \Psi_{2i}^{-} }{ 2^{b'(\Lambda)} \cdot \prod\limits_{h\in \mathcal{H}\setminus\mathcal{C}} \Psi_{h}^{-} \cdot \prod\limits_{c\in \mathcal{C}\setminus\mathcal{H}} \Psi_{c}^{+}}\in \Q_{\geq 1},
	\end{align*}
	\noindent where $\mathcal{I}=\{K,L\}$ if $k\neq \ell$ and $\mathcal{I}=\emptyset$ otherwise. Note that the $\QQ'$-part of $C$ equals $C$ whereas

	First, assume $k\neq \ell$. Then, we have $b'(\Lambda)=1$ and $\mathcal{H}\neq \mathcal{C}$. Table~\ref{tab:hookcohooklengths} lists the elements of the three multisets $\mathcal{H}\cap \mathcal{C}$, $\mathcal{C}\setminus\mathcal{H}$, $\mathcal{H}\setminus \mathcal{C}$ which in turn determine $\mathcal{H}$ and $\mathcal{C}$ completely. Here, $\widehat{i}$ means that the number $i$ is omitted.
	A sequence $1,\dots, i$ is interpreted as the empty sequence if $i=0$.
	If a multiset has no elements, we display $\emptyset$.
	
	\begin{table}[t]
		\begin{center}
			\begin{tabular}{ccccc}
				\toprule
				                                               & \multicolumn{4}{c}{Symbol}                                                                                      \\
				\cmidrule{2-5}
				                                               & $\Lambda_1$                                                  & $\Lambda_2$   & $\Lambda_3$ & $\Lambda_4$        \\
				\midrule
				\multirow{3}{*}{$\mathcal{H}\cap \mathcal{C}$} & \multicolumn{4}{c}{ $1,\dots,k,$}                                                                               \\
				                                               & \multicolumn{4}{c}{ $1,\dots,\widehat{\ell-k},\dots, \ell;$}                                                    \\
				                                               & \multicolumn{4}{c}{$1,\dots, L-k-1$}                                                                            \\
				\addlinespace[0.5 em]
				$\mathcal{C}\setminus \mathcal{H}$             & $K$                                                          & $L$           & $\ell-k$    & $\ell-k$, $K$, $L$ \\
				\addlinespace[0.5 em]
				$\mathcal{H}\setminus\mathcal{C}$              & $\ell-k$, $L$                                                & $\ell-k$, $K$ & $K$, $L$    &                 $\emptyset$   \\
				\bottomrule
			\end{tabular}
		\end{center}
		\caption{Hook and cohook lengths of symbols $\Lambda_i$, $1\leq i\leq 4$, for $k<\ell$.}
		\label{tab:hookcohooklengths}
	\end{table}
	\noindent Using the data in Table~\ref{tab:hookcohooklengths} and Equation~\eqref{eq:CandE}, we get
	\begin{align*}\tag{c} \label{eq:degC}
		C=\frac{ \prod\limits_{\substack{i=1  \\i \neq K,L }}^n \Psi_{2i}^{-} }{\prod\limits_{x\in \mathcal{H}\cap\mathcal{C}} \Psi_{2x}^{-} }
		=\frac{ \prod\limits_{\substack{i=1   \\i \neq K,L }}^n \Psi_{2i}^{-} }{ \prod\limits_{i=1}^{L-k-1} \Psi_{2i}^{-} \cdot \prod\limits_{i=1}^{k} \Psi_{2i}^{-} \cdot \prod\limits_{\substack{i=1\\i \neq \ell-k }}^\ell \Psi_{2i}^{-} }
		= \frac{\prod\limits_{\substack{i=L-k \\i\neq K, L}}^{n} \Psi_{2i}^{-} }{\prod\limits_{i=1}^{k} \Psi_{2i}^{-} \cdot \prod\limits_{\substack{i=1\\i \neq \ell-k }}^\ell \Psi_{2i}^{-}}.
	\end{align*}
	After splitting the product in the numerator of the rightmost equation in Equation~\eqref{eq:degC} at the omitted number $L$ and using that $n=L+\ell$ (this is the same trick as in Lemma~\ref{lem:degalmosthookpartitiontypeA2A} and Lemma~\ref{lem:degalmosthookpartition}) and suitable index shifts we obtain %
	the desired right-hand side expression for $C$.
	Splitting at the omitted number $K$ instead of~$L$ and taking into account that $L-k=n-\ell-k=K-\ell$, we get the desired left-hand side expression.%
	
	\noindent Now, we determine $E=\frac{ \Psi_{2K}^{-} \Psi_{2L}^{-}}{ 2 \cdot \prod_{h\in \mathcal{H}\setminus\mathcal{C}} \Psi_{h}^{-} \cdot \prod_{c\in \mathcal{C}\setminus\mathcal{H}} \Psi_{c}^{+}}$.
	Using that $\Psi_{2f}^{-}=\Psi_f^{+}\cdot \Psi_{f}^{-}$ for any $f\geq 1$, we obtain the expressions claimed in Table~\ref{tab:exdeg}. This concludes the case $k<\ell$.\\
	
	If $k=\ell$ and $\Lambda=\Lambda_1=\Lambda_2$, we have $b'(\Lambda)=0$ and $$\mathcal{H}=\{1,\dots,\ell, 1,\dots, k, 1,\dots, L-k-1, K\}=\mathcal{C} $$ (using $K=L$). Observe that $\mathcal{H}\cap \mathcal{C}$ is the same multiset as for $k<\ell$, and we have $\mathcal{H}\setminus\mathcal{C}=\emptyset=\mathcal{C}\setminus\mathcal{H}$. Thus, we get $E=1$, and we obtain the same formulae for $C$ as in the previous case.
\end{proof}

Next, we examine whether the symbols defined in Table~\ref{tab:specialsymbols} are $p$-principal for an odd prime $p$.
The next lemma is an analog of Lemma~\ref{lem:almosthookpartitionpprincipal}, now for symbols instead of almost hook partitions, but only for odd primes $p$. The proof follows easily considering the (possibly intertwined) $e_p$-abaci $A_X$, $A_Y$ corresponding to $\beta$-sets $X$, $Y$, defining a symbol~$\Lambda={X \choose Y}$.

\begin{lemma} \label{lem:specialsymbolspprincipaltypeBC} Let $n\geq 2$, and $p$ an odd prime coprime to~$\QQ$, and $\Lambda_j$, $1\leq j\leq 4$ be the symbols defined in Table~\ref{tab:specialsymbols}.
	Assume that $e_p=\ord(\QQ^2)$ divides either $L\coloneqq n-\ell$ or $K\coloneqq n-k$. Then, the following hold:
	\begin{enumerate}[\rm(i)]
		\item If $k=\ell$, then $\Lambda=\Lambda_1=\Lambda_2$ is $p$-principal.
		\item If $p$ is linear for $\QQ$, and $e_p$ divides $L$ \textup(resp. $K$\textup), then $\Lambda_1$ \textup(resp. $\Lambda_2$\textup) is $p$-principal.
		\item If $p$ is unitary for $\QQ$ and $K$ \textup(resp. $L$\textup) is an odd multiple of~$e_p$, then $\Lambda_1$ \textup(resp. $\Lambda_2$\textup) is $p$-principal.
		\item If $p$ is linear for $\QQ$, then $\Lambda_3$ is $p$-principal.
		\item If $p$ is unitary for $\QQ$ and $K$ or $L$ are even \textup(resp. odd\textup) multiples of~$e_p$, then $\Lambda_3$ \textup(resp. $\Lambda_4$\textup) is $p$-principal.

	\end{enumerate}
\end{lemma}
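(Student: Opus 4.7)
My approach is a case-by-case verification using the $e_p$-abacus model. For a symbol $\Lambda={X\choose Y}$, I would record the beads of $X$ and $Y$ on two $e_p$-runner abaci $A_X$, $A_Y$. An $e_p$-hook of $\Lambda$ at an entry of $X$ corresponds to a bead on $A_X$ with an empty slot one position higher on the same runner, and removing it slides the bead up (analogously for $Y$). An $e_p$-cohook at an entry of $X$ corresponds to a bead on $A_X$ whose matching slot on $A_Y$ is empty, and removing it transfers that bead across. The $e_p$-core (resp.\ $e_p$-cocore) of $\Lambda$ is obtained by sliding as far up as possible on the same abacus (resp.\ by transferring across maximally). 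Since the trivial symbol $n\choose\emptyset$ has $e_p$-core and $e_p$-cocore equal to $s\choose\emptyset$, the task in each part is to reduce $\Lambda_j$ to $s\choose\emptyset$ by removing $e_p$-hooks (in the linear case) or $e_p$-cohooks (in the unitary case).

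For part (i), the hypothesis $k=\ell$ yields $K=L=n-k$, and since $e_p\mid(n-k)$, the entry $n-k$ appears in both rows of $\Lambda_1=\Lambda_2$. The beads at position $n-k$ on $A_X$ and $A_Y$ each have an empty slot directly above them (since positions $0,\dots,n-2k-1$ form a dense initial segment in both $X$ and $Y$), so one can either slide both upward on their own runners (as hooks) or transfer them across (as cohooks). In either case the bead relocates to position $s$, and what remains is equivalent to ${s\choose\emptyset}$; hence $\Lambda_1$ is $p$-principal irrespective of whether $p$ is linear or unitary. For parts (ii)--(v), I would isolate the unique ``exceptional'' bead(s) among $n-\ell$ and $n-k$ that one wishes to slide or transfer, and check that the prescribed hypothesis ($e_p\mid L$ or $K$, together with the appropriate parity of $L/e_p$ or $K/e_p$ in the unitary case) guarantees that the bead lands in the correct abacus after the reduction. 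In (iv) (linear case, $\Lambda_3$), the hook reduction slides the beads at positions $n-\ell, n-k$ within $A_X$ until the configuration matches $s\choose\emptyset$ without invoking any parity. In (v), since $\Lambda_3$ has defect $1$ whereas $\Lambda_4$ has defect $3$, reaching the defect-$1$ target via cross-abacus transfers forces an even number of transfers from $\Lambda_3$ and an odd number from $\Lambda_4$, matching exactly the even/odd multiple hypothesis on $K/e_p$ and $L/e_p$.

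\textbf{Expected main obstacle.} The delicate point is the parity bookkeeping in parts (iii) and (v) under the unitary hypothesis. Each $e_p$-cohook removal changes the defect by $\pm 2$, so the count of transfers from $A_X$ to $A_Y$ versus back must be consistent with the fixed defects of the initial and target symbols. I would have to verify carefully that, under the stated parity hypothesis on $K/e_p$ (resp.\ $L/e_p$), the two ``exceptional'' beads in $\Lambda_3$ or $\Lambda_4$ can indeed be transferred the correct number of times to land in the right abacus without one transfer being blocked by the other. Once this interaction is traced out on the abacus—where the dense segment $0,1,\dots,n-k-\ell-1$ never creates obstructions—the remaining verifications are straightforward.
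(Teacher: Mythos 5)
Your proposal follows the same route as the paper: the paper's proof of this lemma is a single remark that the claims "follow easily considering the (possibly intertwined) $e_p$-abaci $A_X$, $A_Y$," and your plan spells out exactly that verification (hooks as slides on the same runner, cohooks as transfers across abaci, and a defect-parity count to distinguish the $\Lambda_3$/$\Lambda_4$ cases in the unitary situation). The level of rigor is comparable to the paper's, which likewise leaves the explicit abacus computations to the reader.
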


Now, we are able to prove Conjecture (NRS) for $G$ using the $e_p$-$p$-adic setup introduced in Section~\ref{eppadicsetup} (with $x$ chosen as $x=\QQ^2$, not $x=\QQ$ as in type $\type{A}$) and the symbols defined above, in Table~\ref{tab:specialsymbols}. We start with a case distinction similar to the one in the proof of Proposition~\ref{pqtypeA} (type $\type{A}$). In contrast to this case, we also need a subcase distinction. The analysis of unipotent character degrees requires Section~\ref{sec:degreeanalysis}.

\begin{proposition}\label{pqtypeBC}
	Let $\GG$ be a simple simply connected linear algebraic group of type $\type{B}_n$ with $n\geq 3$ or $\type{C}_n$ with $n\geq 2$, defined over $\overline{ \F_\QQ}$, and $F\colon \GG \to \GG$ a Frobenius endomorphism defining an $\F_\QQ$-structure on $\GG$. Let $G\coloneqq {\GG}^F$ and $p$ and $q$ be distinct primes such that $p$ and $q$ do not divide~$\QQ$ and $pq$ divides~$\abs G$. %
	Then, we have $$\irrprime p {B_p(G)} \cap \irrprime q {B_q(G)}\neq\{\mathbbm{1}_G\}.$$
\end{proposition}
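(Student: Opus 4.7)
The plan is to transpose the proof of Proposition~\ref{pqtypeA} from type $\type A$ to types $\type B$ and $\type C$, replacing almost hook partitions by the symbols $\Lambda_1,\ldots,\Lambda_4$ of Table~\ref{tab:specialsymbols}. Set $e_p=\ord_p(\QQ^2)$ and $e_q=\ord_q(\QQ^2)$, write $n=we_p+s=me_q+r$ with $0\leq s<e_p$ and $0\leq r<e_q$, and, after possibly interchanging $p$ and $q$, assume $r\geq s$. If $p=2$, then $e_p=1$ and every unipotent character lies in $B_2(G)$ by \cite[Theorem~13]{CE93}, so the task reduces to producing a non-trivial unipotent character in $B_q(G)$ of $q'$-degree, which is subsumed by the analysis below specialised to a single prime. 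Hence one may assume that $p$ and $q$ are both odd. The small-$r$ cases ($r\leq 1$) are disposed of ad hoc, in the spirit of the Steinberg-character argument in type $\type A$, by exhibiting an explicit non-trivial symbol with trivial $e_p$-core and $e_p$-cocore. Henceforth $r\geq\max(2,s)$, which in particular forces $q\geq 5$.

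For the main case one applies the chopped $e_p$-$p$-adic framework of Section~\ref{eppadicsetup} with $x=\QQ^2$, obtaining the decomposition $n=b+T$ and, where applicable, $n=b'+T'$, and runs through the same six cases as in type $\type A$: Cases~I, IIa, IIb, IIbb, IIIa, IIIb. In each case one selects a symbol $\Lambda\in\{\Lambda_1,\ldots,\Lambda_4\}$ with parameters $(k,\ell)$ matching those of the almost hook partition of the corresponding case of Proposition~\ref{pqtypeA} --- for instance $k=b$, $\ell=r$ in Case~IIa, so $K=n-k=T$ and $L=n-\ell=R=me_q$, with the analogous choices built from $b',r'$ in Cases~IIIa--b. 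The specific choice of $j\in\{1,2,3,4\}$ is dictated by whether $p$ and $q$ are linear or unitary for $\QQ$ and, for unitary primes, by the parities of $K/e_p$ and $L/e_q$. Lemma~\ref{lem:specialsymbolspprincipaltypeBC} then provides $\{p,q\}$-principality of the chosen symbol, using only the divisibilities $e_p\mid T\mid K$ and $e_q\mid R\mid L$ that hold by construction.

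For the degree analysis one splits $(d^\Lambda)_{\QQ'}=C\cdot E_{\QQ'}$ as in Lemma~\ref{lem:degspecialsymbolstypesBC}. The canonical part $C$ has exactly the shape of the type $\type A$ degree, with every factor $\Psi_i$ replaced by $\Psi_{2i}^{-}=\Psi_i^{-}(\QQ^2)$, so the argument of Proposition~\ref{pqtypeA} transfers almost verbatim: Corollary~\ref{coro:psieppparts} applied with $x=\QQ^2$, together with the control on the $p$-parts of $T\pm k$ (respectively $T'\pm k$) from Lemmas~\ref{lem:nepeqquantities} and~\ref{lem:nepeqquantities2}, shows that $C$ is coprime to both $p$ and $q$ in every case.

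The main obstacle, and the genuinely new feature in types $\type B$ and $\type C$, is the exceptional factor $E=\Psi_K^{\pm}\Psi_L^{\pm}/(2\,\Psi_{\ell-k}^{\pm})$. The key observation is that the four symbols $\Lambda_1,\ldots,\Lambda_4$ realise precisely the four sign combinations on the numerator $\Psi_K^{\pm}\Psi_L^{\pm}$: by Lemma~\ref{lem:divpsiep}, an odd prime divides exactly one of $\Psi_K^{+}$, $\Psi_K^{-}$, the sign being determined by the linear/unitary type and by the parity of $K/e_p$, and similarly for $\Psi_L$. Choosing $\Lambda_j$ with the opposite signs on both factors makes the numerator of $E$ coprime to $p$ and to $q$ simultaneously. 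The denominator factor $\Psi_{\ell-k}^{\pm}$ is coprime to both primes because $|\ell-k|$ belongs to $\{r,b\}$ or $\{r',b'\}$ and is too small to be a multiple of $e_p$ or $e_q$ in the relevant cases --- and in Case~I, where $k=\ell$, one even has $E=1$ by Lemma~\ref{lem:degspecialsymbolstypesBC}. The factor $2$ is coprime to $p$ and $q$ by the reduction to odd primes made above. Careful bookkeeping of the linear/unitary dichotomy and of the parity constraints, together with the structural input $q\mid m-1$ in Cases~IIIa--b, then completes the verification in each of the six cases.
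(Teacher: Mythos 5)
Your high-level strategy is right and matches the paper: apply the chopped $e_p$-$p$-adic framework with $x=\QQ^2$, run the same six cases I--IIIb, choose one of the symbols $\Lambda_1,\ldots,\Lambda_4$ in each, split the degree as $C\cdot E_{\QQ'}$, and transfer the canonical-part analysis from type $\type A$. However, three of your simplifications conceal genuine gaps.

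First, the reduction ``one may assume $p,q$ both odd'' is unjustified. When $p=2$ the block condition at $2$ is vacuous, but the degree condition $(d^\Lambda)_2=1$ certainly is not, and it is not ``a single-prime problem'' --- you must still arrange odd degree while controlling the $q$-block and $q$-part. The paper keeps $p=2$ inside the case analysis (subcases $\boldsymbol\alpha$, $\boldsymbol\beta$, where it is shown explicitly that $(\Psi_T^{+})_2=2$ cancels the factor $2$ and that $(\Psi_R^{\pm})_2=(\Psi_{|r-b|}^{\pm})_2$). Second, you propose to dispose of $r\leq 1$ ad hoc and work with $r\geq\max(2,s)$, hence $q\geq 5$. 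But for types $\type B$, $\type C$ the Steinberg symbol ${0,1,\ldots,n\choose 1,\ldots,n}$ is $p$-principal only when $s=0$; it does not cover $r=1$. The paper therefore only discards $r=s=0$, keeps $r\geq\max(s,1)$ (hence $q\geq 3$, not $5$), and the $r=1$ situation is absorbed into the main case analysis --- the bound $b'<e_q+3r<qe_q$ in Case IIIb is verified there precisely under $q\geq 3$. Your ``explicit non-trivial symbol with trivial $e_p$-core and cocore'' for $r=1$ is never produced and is not automatic.

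The most serious gap is in the analysis of the exceptional factor $E=\Psi_K^{\pm}\Psi_L^{\pm}/(2\,\Psi_{|\ell-k|}^{\pm})$. You claim one can always choose $\Lambda_j$ so that the numerator is coprime to both $p$ and $q$ and that the denominator $\Psi_{|\ell-k|}^{\pm}$ is coprime to both because $|\ell-k|$ is ``too small to be a multiple of $e_p$ or $e_q$''. Neither assertion holds in general. Take Cases IIa--IIbb with $K=T$, $L=R$: since $e_p\mid T$ and $R=T+(b-r)$, we have $e_p\mid R$ precisely when $e_p\mid(b-r)$ --- and $e_p$ can be as small as $1$, so nothing prevents $e_p\mid|r-b|$. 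In that situation $p$ divides exactly one of $\Psi_R^{+},\Psi_R^{-}$ and exactly one of $\Psi_{|r-b|}^{+},\Psi_{|r-b|}^{-}$, and there is no reason the sign excluded by $q$ on $\Psi_R$ coincides with the sign excluded by $p$; you can be forced to carry a $p$-divisible $\Psi_R^{\eps}$ in the numerator and a $p$-divisible $\Psi_{|r-b|}^{\eps}$ in the denominator. The paper's proof does not try to avoid this; it exploits the \emph{cancellation}: from $R_p=|r-b|_p$ and Corollary~\ref{coro:psieppparts} (parts (i), (ii)(a), (ii)(b), dispatched by the parity of $\tilde T$ encoded in the subcase) one gets $(\Psi_R^{\pm})_p=(\Psi_{|r-b|}^{\pm})_p$, so the ratio $\Psi_R^{\pm}/\Psi_{|r-b|}^{\pm}$ is $p'$. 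The subcase parameters $\boldsymbol\alpha,\ldots,\boldsymbol\delta$ are chosen so that the remaining factor $\Psi_T^{\pm}$ is $p'$ (by Lemma~\ref{lem:divpsiep}) and the factor $\Psi_R^{\pm}$ is $q'$, and the denominator handles what is left. Your proposal, as written, would get stuck exactly in the cases where $e_p\mid|r-b|$ (analogously $e_p\mid|r'-b'|$ in Cases IIIa--b), and the claimed ``opposite signs on both factors'' dodge is not available.
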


\begin{proof}
	Let $e_p\coloneqq \ord_p(\QQ^2)$ and $e_q\coloneqq \ord_q(\QQ^2)$ as in the introductory paragraph of this section. By the degree formula \cite[Table 1.3]{GM} the condition $pq$ divides the group order~$\abs G$ is equivalent to~$e_p\leq n$ and $e_q\leq n$.
	Write $n=\omega e_p+s=m e_q+r$ with $\omega, m\geq 1$ and $0\leq r<e_q<q$ and $0\leq s<e_p<p$. This is possible since~$e_p, e_q \leq n$.

	Using the setup fixed so far, we have to find a symbol $\Lambda \neq {{n}\choose\emptyset}$ of rank $n$ and odd defect (thus parametrizing an unipotent character of~$G$) which is
	$\{p,q\}$-principal such that its degree $d^\Lambda$ is coprime to both $p$ and $q$.

	Without loss of generality, we can assume $r\geq s$ (or interchange $p$ and $q$).
	Moreover, we can assume $r\neq 0$, as for $r=s=0$ the Steinberg character, labeled by the symbol ${0 , 1, \dots, n}\choose{\phantom{0,}1, \dots, n}$, is $\{p,q\}$-principal by the remark after the definition of~$p$-principality. Furthermore, its degree is a power of~$\QQ$, by \cite[Proposition 3.4.10]{GM}, so coprime to both $p$ and $q$ as~$p$ and $q$ are non-defining primes.

	Henceforth, we have \fbox{$r\geq \max(s,1)$}. Note that $r\geq 1$ implies %
	$q\geq 3$ since~$r<e_q<q$. %
	In the following, we distinguish the following main cases given in Table~\ref{tab:casestypeBC}. Here, we use the quantities $b$, $T$, $R$, $r$ and $b'$, $T'$, $R'$, $r'$ defined and examined in Section~\ref{eppadicsetup} (taking $x=\QQ^2$ in the definition).
	\begin{table}[h]
		\begin{center}
	\renewcommand{\arraystretch}{1.5}
	\begin{center}
		\begin{tabular}{|p{2cm}l|}
			\hline
			Case I:    & $r=b$.                                    \\
			Case IIa:  & $r>b$.                                    \\
			Case IIb:  & $r<b<e_q$.                                \\
			Case IIbb: & $r<e_q\leq b$ and $q\nmid (m-1)$.         \\
			Case IIIa: & $r<e_q\leq b$, $q\mid (m-1)$ and $r'>b'$. \\
			Case IIIb: & $r<e_q\leq b$, $q\mid (m-1)$ and $r'<b'$. \\
			\hline
		\end{tabular}
	\end{center}
	\renewcommand{\arraystretch}{1}
	\end{center}
	\caption{Case distinction for a group $G$ of type $\type{B}_n$ or $\type{C}_n$, where $r\geq \max(s,1)$.} \label{tab:casestypeBC}
\end{table}

	With the exact same arguments as in the proof of Proposition~\ref{pqtypeA} (type A) we see that the quantities $b'$ and $T'$ are well-defined in Cases~IIIa--b (as $m\geq 3$). Moreover, the Cases~I--IIIb exhaust all possible cases (since~$b'=r'$ is impossible by Lemma~\ref{lem:nepeqquantities2}).

	Recall that $R\coloneqq m e_q$ and $R'\coloneqq  (m-1) e_q$ (if $b'$ is defined) so that $n=R+r$ and $n=R'+r'$.
	Note that in Case~I, we have $T=n-b=n-r=me_q=R$, in Case~IIa $T$ is strictly bigger than $R$ and in the remaining four cases $T$ is strictly smaller than $R$. Similarly, we see that $T'>R'$ in Case~IIIa and $T'<R'$ in Case~IIIb.

	A further subcase distinction is needed before we give a suitable symbol %
	in each of the out-coming cases. Then we check the block and degree conditions separately.

	To tackle Cases~I--IIbb, we need to distinguish further cases coming from mutually exclusive conditions on each of the pairs $(q,m)$ and $(p, \tilde T )$
	where $\tilde T \coloneqq \frac{T}{e_p}$.
	Here we use the notion of linear and unitary primes recalled in the introductory paragraph of this section.

	The conditions on $q$ and $m$ are as follows:
	Since~$q$ is odd, $q$ is either linear (i.e., $q$ divides~$\Psi_{e_q}^{-}$) or unitary for $Q$ (i.e, $q$ divides~$\Psi_{e_q}^{+}$). If $q$ is unitary for $Q$, we distinguish whether $m$ is even or odd. This yields three mutually exclusive conditions on $(q,m)$ one of which is always fulfilled.

	The conditions on $(p, \tilde T)$ are similar with $\tilde T = \frac{T}{e_p}$ playing the role of~$m=\frac{R}{e_q}$. Either $p=2$ or $p$ is odd, then $p$ is either linear (i.e., $p$ divides~$\Psi_{e_p}^-$) or unitary for $\QQ$ (i.e, $p$ divides~$\Psi_{e_p}^{+}$). In the unitary case, we distinguish whether $\tilde T$ is even or odd, in total yielding four mutually exclusive conditions on $(p, \tilde T)$.
	Grouping the various conditions on both of the pairs $(q,m)$ and $(p, \tilde T )$ together, we define the subcases $\boldsymbol \alpha$, $\boldsymbol \beta$, $\boldsymbol \gamma$, $\boldsymbol \delta$ as indicated in Table~\ref{tab:subcases}.

	In Cases~IIIa--IIIb, a subcase distinction analogously to the one for Cases~I--IIbb is used. Similarly to before, we can pose mutually exclusive conditions on the pair $(q,m-1)$ and similar ones on the pair $(p, \tilde T' )$, where $\tilde T' \coloneqq \frac{T'}{e_p}$, which in turn define the subcases $\boldsymbol \alpha'$, $\boldsymbol \beta'$, $\boldsymbol \gamma'$, $\boldsymbol \delta'$, see Table~\ref{tab:subcasesprime}.
	\begin{table}[h]\small
		\begin{tabular}{cccc}
			\toprule
			                                                                                                &                                           & \multicolumn{2}{c}{\bf Condition on $q$}                                        \\
			\cmidrule(lr){3-4}
			                                                                                                &                                           & $q$ linear for $\QQ$ or                  & $q$ unitary for $\QQ$                \\
			                                                                                                &                                           & $q$ unitary for $\QQ$ and $m$ even       & and $m$ odd                          \\
			\cmidrule{2-4}
			\multirow{4.5}{*}{\rotatebox[origin=c]{90}{{\parbox[c]{1.8cm}{\centering \bf Condition on $p$}}}} & $p=2$ or                                  & \multirow{3}{*}{$\boldsymbol \alpha$}    & \multirow{3}{*}{$\boldsymbol \beta$} \\
			                                                                                                & $p$ linear for $\QQ$ or                   &                                          &                                      \\
			                                                                                                & $p$ unitary for $\QQ$ and $\tilde T$ even &                                          &                                      \\
			\cmidrule{2-4}
			                                                                                                & $p$ unitary for $\QQ$ and $\tilde T$ odd  & $\boldsymbol \gamma$                     & $\boldsymbol \delta$                 \\
			                                                                                                \addlinespace
			\bottomrule
		\end{tabular}
		\caption{Definition of subcases~$\boldsymbol \alpha$, $\boldsymbol \beta$, $\boldsymbol \gamma$, $\boldsymbol \delta$ for Cases~I--IIbb.}
		\label{tab:subcases}
	\end{table}

	\begin{table}[h]\small
		\begin{tabular}{cccc}
			\toprule
			                                                                                                &                                            & \multicolumn{2}{c}{\bf Condition on $q$}                                         \\
			\cmidrule{3-4}
			                                                                                                &                                            & $q$ linear for $\QQ$ or                  & $q$ unitary for $\QQ$                 \\
			                                                                                                &                                            & $q$ unitary for $\QQ$ and $m$ odd        & and $m$ even                          \\
			\cmidrule{2-4}
			\multirow{4.5}{*}{\rotatebox[origin=c]{90}{{\parbox[c]{1.8cm}{\centering \bf Condition on $p$}}}} & $p=2$ or                                   & \multirow{3}{*}{$\boldsymbol \alpha'$}   & \multirow{3}{*}{$\boldsymbol \beta'$} \\
			                                                                                                & $p$ linear for $\QQ$ or                    &                                          &                                       \\
			                                                                                                & $p$ unitary for $\QQ$ and $\tilde T'$ even &                                          &                                       \\
			\cmidrule{2-4}
			                                                                                                & $p$ unitary for $\QQ$ and $\tilde T'$ odd  & $\boldsymbol \gamma'$                    & $\boldsymbol \delta'$                 \\
			                                                                                                \addlinespace
			\bottomrule
		\end{tabular}
		\caption{Definition of subcases $\boldsymbol \alpha'$, $\boldsymbol \beta'$, $\boldsymbol\gamma'$, $\boldsymbol \delta'$ for Cases IIIa--IIIb.}
		\label{tab:subcasesprime}
	\end{table}

	In Cases~I--IIbb, for each of the (sub)cases just defined, Table~\ref{tab:typeBCCasesI-IIbb} lists a symbol $\Lambda$, and we claim that it has the desired properties. %
	Remark that we have $R-b=n-r-b=T-r>0$ (as~$T>r$), thus $R>b$ and $R-b\leq T$. This implies, together with the condition on $R$ and $T$ imposed in Cases~IIa--b, that the symbols in Table~\ref{tab:typeBCCasesI-IIbb} are all well-defined.

	\begin{table}[h!]\small
		\begin{center}
			\begin{tabular}{ccccc}
				\toprule
				     \multirow{2.5}{*}{Case}& \multicolumn{4}{c}{Subcase}                                                                                                                                       \\
				\cmidrule(lr){2-5}
				 & $\boldsymbol \alpha$                                                                          & $\boldsymbol \beta$ & $\boldsymbol \gamma$ & $\boldsymbol \delta$ \\
				\midrule \addlinespace
				I
				     & \multicolumn{4}{c}{${0, 1, 2, \ldots, R-b-1, R}\choose{\phantom{0,} 1, 2, \ldots, R-b-1, R}$}                                                                     \\ \addlinespace \addlinespace
				IIa
				     & ${\phantom{0,}1, 2, \ldots, R-b-1, R, T}\choose{0, 1, 2, \ldots, R-b-1\phantom{, R, T}}$
				     & ${0,1, 2, \ldots, R-b-1, R}\choose{\phantom{0,}1, 2, \ldots, R-b-1, T}$
				     & ${0, 1, 2, \ldots, R-b-1, T}\choose{\phantom{0,}1, 2, \ldots, R-b-1, R}$
				     & ${0, 1, 2, \ldots, R-b-1, R, T}\choose{\phantom{0,} 1, 2, \ldots, R-b-1 \phantom{, R, T}}$                                                                        \\ \addlinespace \addlinespace
				IIb--bb
				     & {${\phantom{0,} 1, 2, \ldots, R-b-1, T, R}\choose{0, 1, 2, \ldots, R-b-1 \phantom{, T, R}}$}
				     & {${0,1, 2, \ldots, R-b-1, R}\choose{\phantom{0, }1, 2, \ldots, R-b-1, T}$}
				     & {${0, 1, 2, \ldots, R-b-1, T}\choose{\phantom{0, }1, 2, \ldots, R-b-1, R}$ }
				     & { ${0, 1, 2, \ldots, R-b-1, T, R}\choose{\phantom{0,} 1, 2, \ldots, R-b-1\phantom{, T, R}}$}                                                                      \\ \addlinespace
				\bottomrule
			\end{tabular}
		\end{center}
		\caption{Symbols for non-trivial unipotent characters in \linebreak $\irrprime p {\blprinc{p}{G}} \cap \irrprime q {\blprinc{q}{G}}$ for a group $G$ of type $\type{B}_n$ or $\type{C}_n$, $n\geq 2$, Cases~I--IIbb.}
		\label{tab:typeBCCasesI-IIbb}
	\end{table}

	Note that, whenever $r = s = 0$ (then $b=r$), the character in Case~I which would correspond to the symbol given in Table~\ref{tab:typeBCCasesI-IIbb} is the Steinberg character, and we recover the character chosen before.

	In Cases~IIIa--b, we consider similar symbols as in Table \ref{tab:typeBCCasesI-IIbb}, see Table~\ref{tab:typeBCCasesIII}. Note that these symbols can be obtained from Cases~IIa--b substituting $b$, $T$, $R$, $r$ by $b'$, $T'$, $R'$, $r'$ and the subcases $\boldsymbol \alpha$, $\boldsymbol \beta$, $\boldsymbol \gamma$, $\boldsymbol \delta$ by their pendants $\boldsymbol \alpha'$, $\boldsymbol \beta'$, $\boldsymbol\gamma'$, $\boldsymbol \delta'$.
	Analogously to Cases~IIa--b, we see that the symbols in Table~\ref{tab:typeBCCasesIII} are well-defined, first observing $R'>b'$ and then using $R'-b'=T'-r'<T'$ together with the condition on $R'$ and $T'$ imposed in Cases~IIIa--b.

	\begin{table}[h!]\small
		\begin{center}
			\begin{tabular}{ccccc}
				\toprule
				      \multirow{2.5}{*}{Case}&  \multicolumn{4}{c}{Subcase}                                                                                                                                               \\
				\cmidrule(lr){2-5}
				& $\boldsymbol \alpha'$                                                                              & $\boldsymbol \beta'$ & $\boldsymbol \gamma'$ & $\boldsymbol \delta'$ \\
				\midrule \addlinespace
				IIIa
				     & ${\phantom{0, }1, 2, \ldots, R'-b'-1, R', T'}\choose{0, 1, 2, \ldots, R'-b'-1 \phantom{, R', T'}}$
				     & ${0,1, 2, \ldots, R'-b'-1, R'}\choose{\phantom{0,}1, 2, \ldots, R'-b'-1, T'}$
				     & ${0, 1, 2, \ldots, R'-b'-1, T'}\choose{\phantom{0,}1, 2, \ldots, R'-b'-1, R'}$
				     & ${0, 1, 2, \ldots, R'-b'-1, R', T'}\choose{\phantom{0,}1, 2, \ldots, R'-b'-1\phantom{, R', T'}}$                                                                          \\ \addlinespace \addlinespace
				IIIb
				     & ${\phantom{0, }1, 2, \ldots, R'-b'-1, T', R'}\choose{0, 1, 2, \ldots, R'-b'-1 \phantom{, T', R'}}$
				     & ${0,1, 2, \ldots, R'-b'-1, R'}\choose{\phantom{0,}1, 2, \ldots, R'-b'-1, T'}$
				     & ${0, 1, 2, \ldots, R'-b'-1, T'}\choose{\phantom{0,}1, 2, \ldots, R'-b'-1, R'}$
				     & ${0, 1, 2, \ldots, R'-b'-1, T', R'}\choose{\phantom{0,}1, 2, \ldots, R'-b'-1\phantom{, T', R'}}$                                                                          \\ \addlinespace
				\bottomrule
			\end{tabular}
		\end{center}
		\caption{Symbols for non-trivial unipotent characters in \linebreak $\irrprime p {B_p(G)} \cap \irrprime q {B_q(G)}$ for a group $G$ of type $\type{B}_n$ or $\type{C}_n$, $n\geq 2$, Cases~IIIa--b.}
		\label{tab:typeBCCasesIII}
	\end{table}

Note that the symbols in Table~\ref{tab:typeBCCasesI-IIbb} and Table~\ref{tab:typeBCCasesIII} are of the same form as the symbols considered in Lemma 
	~\ref{lem:degspecialsymbolstypesBC} and Lemma~\ref{lem:specialsymbolspprincipaltypeBC}, see also Table~\ref{tab:specialsymbols}.
	Now, let $\Lambda$ be one of the symbols in Table~\ref{tab:typeBCCasesI-IIbb} or Table~\ref{tab:typeBCCasesIII}.
	By Lemma~\ref{lem:degspecialsymbolstypesBC}, we get that the rank of~$\Lambda$ is indeed $n$ and its defect $d$ is odd. (In Cases~I--IIbb take $\{k,\ell\}=\{r,b\}$ such that $k\leq \ell$ and identify $\Lambda$ with the corresponding symbol in Table \ref{tab:specialsymbols}; in Cases~IIIa--b do the same with $\{k,\ell\}=\{r',b'\}$.) 
	Next, we first check that $\Lambda$ fulfills the block conditions and then examine its degree.

	\paragraph*{\textbf{Block analysis}} Since every symbol is $2$-principal (see the remark on $p$-principal symbols at the beginning of this section), we can assume that $p$ is odd.
	By definition of~$T$ and $T'$ (resp. $R$ and $R'$), $e_p$ divides~$n-b=T$ and $n-b'=T'$ (resp. $e_q$ divides~$n-r=R$ and $n-r'=R'$). Thus, Lemma~\ref{lem:specialsymbolspprincipaltypeBC} applies. Taking into account whether $p$ (resp. $q$) is linear or unitary as well as the prescribed parity of~$\frac{T}{e_p}$ and $\frac{T'}{e_p}$ (resp. $\frac{R}{e_q}$ and $\frac{R'}{e_q}$) in each subcase, we see that $\Lambda$ is $p$-principal (resp.~$q$-principal) in each case.

	In total, $\Lambda$ is always $\{p,q\}$-principal and we turn our attention to its degree $d^\Lambda$.

	\paragraph*{\textbf{Degree analysis}} Since $p$ and $q$ are non-defining (i.e., do not divide $\QQ$), we are only interested in the part of the degree $d^\Lambda$ coprime to $\QQ$, denoted $(d^\Lambda)_{\QQ'}$. By Lemma \ref{lem:degspecialsymbolstypesBC}, we have a factorization $(d^\Lambda)_{\QQ'}=C\cdot E_{\QQ'}$ into a canonical part $C\in \Q_{\geq 1}$ and an exceptional part $E\in \Q_{\geq 1}$ which we analyze separately.

	\subparagraph*{\textit{Canonical part $C$ of the degree $d^\Lambda$}}
	We first list the canonical part $C$ of the degree $d^\Lambda$ (written in two equivalent ways) obtained from the previous lemma, see Table~\ref{tab:candegtypeB}. For brevity, we use the notation $\Psi_k\coloneqq \Psi_k^{-}$ for any $k\geq 1$, but only in this paragraph. Note that $C$ does not depend on the subcases.
	The strategy to analyze $C$ is the same as for the degree expressions occurring in the proof of Proposition~\ref{pqtypeA}.
	\begin{table}[t]
		\begin{center}
			\begin{tabular}{lccc}
				\toprule
				Case    & \multicolumn{3}{c}{$C$}                                                                                                                                                                                                                                                       \\
				\cmidrule(lr){1-1} \cmidrule(lr){2-4}
				\addlinespace[0.5 em]
				I%
				        & $\frac{\prod\limits_{k=1}^{b} \Psi_{2(T+k)} \cdot \prod\limits_{k=1}^{r} \Psi_{2(T-k)}}{\prod\limits_{k=1}^{b} \Psi_{2k} \cdot \prod\limits_{k=1}^{r} \Psi_{2k}}$ & $=$ & $\frac{ \prod\limits_{k=1}^{r} \Psi_{2(R+k)} \cdot \prod\limits_{k=1}^{b} \Psi_{2(R-k)} }{ \prod\limits_{k=1}^{r} \Psi_{2k} \cdot \prod\limits_{k=1}^b \Psi_{2k}}$ \\
				\addlinespace[0.5 em]
				IIa     & $\frac{\prod\limits_{k=1}^{b} \Psi_{2(T+k)} \cdot \prod\limits_{\substack{k=1                                                                                                                                                                                                               \\ k\neq r-b}}^{r} \Psi_{2(T-k)}}{\prod\limits_{k=1}^{b} \Psi_{2k} \cdot \prod\limits_{\substack{k=1 \\ k\neq r-b}}^{r} \Psi_{2k}}$ &
				$=$     & $ \frac{\prod\limits_{\substack{k=1                                                                                                                                                                                                                                                  \\k\neq r-b}}^{r} \Psi_{2(R+k)} \cdot \prod\limits_{k=1}^{b} \Psi_{2(R-k)} }{ \prod\limits_{\substack{k=1\\k\neq r-b}}^{r} \Psi_{2k} \cdot \prod\limits_{k=1}^b \Psi_{2k}}$\\
				\addlinespace[0.5 em]
				IIb--bb &
				$\frac{\prod\limits_{\substack{k=1                                                                                                                                                                                                                                                             \\ k\neq b-r}}^{b} \Psi_{2(T+k)} \cdot \prod\limits_{k=1}^{r} \Psi_{2(T-k)}  }{\prod\limits_{\substack{k=1 \\ k\neq b-r}}^{b} \Psi_{2k} \cdot \prod\limits_{k=1}^{r} \Psi_{2k}}$ &
				$=$     & $\frac{\prod\limits_{k=1}^{r} \Psi_{2(R+k)} \cdot \prod\limits_{\substack{k=1                                                                                                                                                                                                               \\ k \neq b-r}}^{b} \Psi_{2(R-k) }}{ \prod\limits_{k=1}^{r} \Psi_{2k} \cdot \prod\limits_{\substack{k=1 \\ k\neq b-r}}^b \Psi_{2k}}$ \\
				\addlinespace[0.5 em]
				IIIa    & $\frac{\prod\limits_{k=1}^{b'} \Psi_{2(T'+k)}\cdot\prod\limits_{\substack{k=1                                                                                                                                                                                                               \\k\neq r'-b'}}^{r'} \Psi_{2{(T'-k)}} }{\prod\limits_{k=1}^{b'} \Psi_{2k} \cdot \prod\limits_{\substack{k=1 \\k\neq r'-b'}}^{r'} \Psi_{2k}}$ &
				$=$     & $\frac{\prod\limits_{\substack{k=1                                                                                                                                                                                                                                                   \\k\neq r'-b'}}^{r'} \Psi_{2{(R'+k)}}  \cdot \prod\limits_{k=1}^{b'} \Psi_{2{(R'-k)}} }{\prod\limits_{\substack{k=1 \\k\neq r'-b'}}^{r'} \Psi_{2k} \cdot \prod\limits_{k=1}^{b'} \Psi_{2k} }
				$                                                                                                                                                                                                                                                                                              \\
				\addlinespace[0.5 em]
				IIIb    & $\frac{\prod\limits_{\substack{k=1                                                                                                                                                                                                                                                   \\k\neq b'-r'}}^{b'} \Psi_{2{(T'+k)}} \cdot \prod\limits_{k=1}^{r'} \Psi_{2{(T'-k)}} }{\prod\limits_{\substack{k=1 \\k\neq b'-r'}}^{b'} \Psi_{2k} \cdot \prod\limits_{k=1}^{r'} \Psi_k} $ &
				$=$     & $\frac{ \prod\limits_{k=1}^{r'} \Psi_{2{(R'+k)}} \cdot \prod\limits_{\substack{k=1                                                                                                                                                                                                          \\k\neq b'-r'}}^{b'} \Psi_{2{(R'-k)}}  }{\prod\limits_{k=1}^{r'} \Psi_{2k} \cdot \prod\limits_{\substack{k=1 \\k\neq b'-r'}}^{b'} \Psi_{2k}}$ \\
				\addlinespace[0.5 em]
				\bottomrule
			\end{tabular}
		\end{center}
		\caption{Canonical part $C$ of the degree $d^\Lambda$.}
		\label{tab:candegtypeB}
	\end{table}

	\subparagraph*{\textit{$p$-part of~$C$}} Here, we examine the $p$-part of~$C$ using the left-hand side expression in Table~\ref{tab:candegtypeB}, e.g., $C=\frac{\prod_{k=1}^{b} \Psi_{2(T+k)} \cdot \prod_{k=1}^{r} \Psi_{2(T-k)}}{\prod_{k=1}^{b} \Psi_{2k} \cdot \prod_{k=1}^{r} \Psi_{2k}}$ in Case~I.%

	First note that, for any $k\geq 1$, the prime $p$ divides~$\Psi_{2k}$ if and only if $e_p=\ord_p(\QQ^2)$ divides~$k$ by Lemma~\ref{lem:divpsi}~(ii). (Equivalently, $\ord_p(\QQ)$ divides~$2k$ as~$\ord_p(\QQ^2)=\frac{\ord_p(\QQ)}{\gcd(2,\ord_p(\QQ))}$.)
	Thus, only factors $\Psi_{2k}$ with $e_p$ diving $k$ can contribute non-trivially to the $p$-part of~$C$. We call such an index $k$ a \textit{relevant index} for the $p$-part of~$C$. Also notice that $e_p$ divides both $T$ and $T'$. In particular, $e_p$ divides~$k$ if and only if $e_p$ divides~$T\pm k$ (resp. $e_p$ divides $T'\pm k$).  So, the strategy is to compare $p$-parts of pairs of factors $\Psi_{2(T\pm k)}$ and $\Psi_{2k}$ (resp. $\Psi_{2(T'\pm k)}$ and $\Psi_{2k}$) in Cases I--IIbb (resp. Cases IIIa--b) for the relevant indices $k$.
	If we have equality of~$p$-parts $(T\pm k)_p=k_p$ (in particular $(2(T\pm k))_p=(2k)_p$) for any relevant~$k$, Corollary~\ref{coro:psipparts}~(i) yields that $\Psi_{2(T\pm k)}$ and $\Psi_{2k}$ have the same $p$-part (here we use again that $\ord_p(\QQ)$ divides~$2k$) and we are done in Cases~I--IIbb. In Cases~IIIa--b, the same argument can be used interchangeably with $T'$ in place of~$T$.

	Now, turn to Cases~I--IIbb. For the relevant indices $k$ (in the given range of~$k$ according to the degree formulas in Table~\ref{tab:candegtypeB}), Lemma~\ref{lem:nepeqquantities}~(i)--(ii) yields that $(T\pm k)_p=k_p$,
	and we conclude that the canonical part $C$ is coprime to $p$ by what we said in the previous paragraph.
	In Cases~IIIa--b, the exact same reasoning works replacing $T$ by~$T'$ and using Lemma~\ref{lem:nepeqquantities2}~(i)--(ii) instead of Lemma~\ref{lem:nepeqquantities}~(i)--(ii).

	\subparagraph*{\textit{$q$-part of~$C$}}
	The analysis of the $q$-part of~$C$ works essentially the same as for the $p$-part. Here, we use the right-hand side expression for $C$, e.g., $C=\frac{ \prod_{k=1}^{r} \Psi_{2(R+k)} \cdot \prod_{k=1}^{b} \Psi_{2(R-k)} }{ \prod_{k=1}^{r} \Psi_{2k} \cdot \prod_{k=1}^b \Psi_{2k}}$ in Case~I. %
	A similar strategy as for the analysis of the $p$-part of~$C$ can be phrased for the prime $q$ replacing $T$ and $T'$ by~$R$ and $R'$, respectively and noticing that $q$ divides $R$ as well as $R'$ (by definition of Cases~IIIa--b).
	Again, an index $k$ in the degree formulas in Table~\ref{tab:candegtypeB} with $e_q$ dividing~$k$ is called \emph{relevant}. \\
	If we have equality of~$q$-parts $(R\pm k)_p=k_p$ (in particular $(2(R\pm k))_q=(2k)_p$) for any relevant index $k$ in Cases~I--IIbb, Corollary~\ref{coro:psipparts}~(i) yields that $\Psi_{2(R\pm k)}$ and $\Psi_{2k}$ have the same $q$-part (here we use again that $\ord_q(\QQ)$ divides~$2k$). Then, we conclude that $C$ is coprime to $q$. In Cases~IIIa--b, the same argument is used interchangeably with $R'$ in place of~$R$.

	To compare the $q$-parts of~$R\pm k$ and $k$ (or $R'\pm k$ and $k$) for the relevant indices $k$ %
	we take into account the defining relation (in terms of~$r$, $b$ and $e_q$) of each particular case separately. If we talk about indices we implicitly mean indices in the range of~$k$ according to the expressions in Table~\ref{tab:candegtypeB}.

	In Cases~I--IIb, there is no relevant index $k$ divisible by~$e_q$ since~$r,b< e_q$, so we are done. In Case~IIbb, there is no relevant index unless $b=e_q$. However, we then have $R-b=R-e_q=(m-1)e_q$, and $R-b$ and $b$ have trivial $q$-part (as~$q$ does not divide~$m-1$), so we are done. %

	We are left with Cases~IIIa--b. Since~$q$ divides~$m-1$ by assumption, we get that $q$ divides~$R'=(m-1)e_q$. In particular, $q$ divides an integer $k$ if and only if $q$ divides~$R'\pm k$. So, Corollary~\ref{coro:psipparts}~(i) tells us that only indices $k$ divisible by~$q$ (and additionally by~$e_q$ by Lemma~\ref{lem:divpsi}~(ii)) matter for the degree analysis. In Case~IIIa, there is no index $k$ divisible by~$qe_q$ as~$b'<r'=e_q+r<q e_q$. In Case~IIIb, we have $r'<b'<e_q+3r$ by Lemma~\ref{lem:nepeqquantities2}~(v) and thus $b'<e_q+3r<qe_q$ since~$r<e_q<q$ and $q\geq 3$.
	Hence, there is no index $k$ divisible by~$q e_q$, and we conclude that $C$ is coprime to $q$ in all cases.

	We continue with the analysis of the exceptional part $E$ of~$d^\Lambda$. We again use the notation $\Psi^{-}_k$ and $\Psi^{+}_k$, for $k\geq 1$, and no longer write $\Psi_k=\Psi^{-}_k$.

	\subparagraph*{\textit{Exceptional part $E$ of the degree $d^\Lambda$}}
	As for the canonical part, we use Lemma \ref{lem:degspecialsymbolstypesBC} to determine the exceptional part $E$. Note that $E=1$ in Case~I, so we are done in this case.
	In Cases~IIa--bb, $E$ is given as in Table~\ref{tab:exdegtypesBCCasesIIabb}. %
	Observe that the value of~$E$ depends on the subcase only.

	\begin{table}[h!]
		\begin{center}
			\begin{tabular}{ccccc}
				\toprule
				    & $\boldsymbol \alpha$                                                    & $\boldsymbol \beta$ & $\boldsymbol \gamma$ & $\boldsymbol \delta$ \\
				\midrule
				$E$ & $\frac{\Psi_{R}^{+} \cdot \Psi_{T}^{+}}{ \Psi_{\abs{r-b}}^{+} \cdot 2}$
				    & $\frac{\Psi_{R}^{-} \cdot \Psi_{T}^{+}}{ \Psi_{\abs{r-b}}^{-} \cdot 2}$
				    & $\frac{\Psi_{R}^{+} \cdot \Psi_{T}^{-}}{ \Psi_{\abs{r-b}}^{-} \cdot 2}$
				    & $\frac{\Psi_{R}^{-} \cdot \Psi_{T}^{-}}{ \Psi_{\abs{r-b}}^{+} \cdot 2}$                                                                     \\
				\bottomrule
			\end{tabular}
		\end{center}
		\caption{Exceptional part $E$ of the degree $d^\Lambda$ in subcases $\boldsymbol \alpha$, $\boldsymbol \beta$, $\boldsymbol \gamma$, $\boldsymbol \delta$ of Cases~IIa--bb.}
		\label{tab:exdegtypesBCCasesIIabb}
	\end{table}

	In Cases~IIIa--b we get expressions similar to Cases~IIa--bb (depending only on the subcases $\boldsymbol \alpha'$, $\boldsymbol \beta'$, $\boldsymbol \gamma'$, $\boldsymbol \delta'$) by replacing $b$, $T$, $R$, $r$ by $b'$, $T'$, $R'$, $r'$.  %
	Now we analyze the $p$- and the $q$-part of~$E$ in the remaining Cases~IIa--IIIb.

	\subparagraph*{\textit{$p$-part of~$E$}}
	We first focus on Cases~IIa--bb. Note that $E$ involves just expressions $\Psi_k^{\pm}$ with $k\in\{R,T, \abs{r-b}\}$. By Lemma~\ref{lem:divpsi}~(ii), %
	only factors $\Psi_k^{\pm}$ with $k$ divisible by~$e_p$ can contribute non-trivially to the $p$-part of~$E$. Henceforth, unless said or contradicted otherwise, we assume that  %
	 $k\in\{R,T, \abs{r-b}\}$ is divisible by $e_p$.
	For $T$ this is always the case. %
	Note that $$R=me_q=n-r=(n-b)+(b-r)=T+(b-r)=T\pm \abs{r-b}$$ and either $\abs{r-b}=r-b<r$ if $r> b$ (i.e., in Case~IIa) or $\abs{r-b}=b-r<b$ if $r< b$ (i.e., in Cases~IIb--bb). Thus, Lemma~\ref{lem:nepeqquantities}~(i) (in Case~IIa) or~(ii) (in Cases~IIab-bb)  yield equality of~$p$-parts $R_p=(\abs{r-b})_p$. %

	First focus on the subcases $\boldsymbol{\alpha}$ and $\boldsymbol{\beta}$, i.e., $E=\frac{\Psi_{T}^{+}}{2} \cdot \frac{\Psi_{R}^{\pm}}{\Psi_{\abs{b-r}}^{\pm}}$. By definition of~$\boldsymbol{\alpha}$ and $\boldsymbol{\beta}$, we either have $p=2$, or $p$ is linear, or $p$ is unitary for $\QQ$ and $\tilde T$ is even.

	If $p=2$, then $T$ is even (as~$T_2=2^{a_{t_0}}$ with $t_0>0$ using $r>0$), %
	thus $$\Psi_{T}^{+}=\QQ^{T}+1=(\QQ^2)^{\frac{T}{2}}+1\equiv 2 \bmod 4$$ and $\Psi_{T}^{+}$ has $2$-part $(\Psi_{T}^{+})_2=2$ canceling with the factor $2$ in the denominator. Moreover, $R_2=(\abs{r-b})_2$ (as shown above) together with Corollary~\ref{coro:psipparts} %
	yield equality of~$2$-parts $(\Psi_{R}^{\pm})_2=(\Psi_{\abs{r-b}}^{\pm})_2$, and we conclude that $E$ is odd.

	If $p$ is linear for $\QQ$ (in particular $p$ is odd), then the factor $\Psi_{T}^+$ is not divisible by~$p$ by Lemma~\ref{lem:divpsiep}~(i). Further, using $R_p=(\abs{r-b})_p$ and Lemma~\ref{lem:divpsi}~(i) together with Corollary~\ref{coro:psieppparts}~(i), we get equality of~$p$-parts $(\Psi_{R}^{\pm})_p=(\Psi_{\abs{r-b}}^{\pm})_p$, and conclude.

	We are left with $p$ being unitary for $\QQ$ and $\tilde T=\frac{T}{e_p}$ being even. Then Lemma~\ref{lem:divpsiep}~(ii) tells us that  $\Psi_{T}^+$ is not divisible by~$p$, so $$E_p=\left( \frac{\Psi_{R}^{\pm}}{\Psi_{\abs{b-r}}^{\pm}}\right)_p.$$ Moreover, using $R_p=(\abs{r-b})_p$ and that $R=T+\abs{b-r}$ and the fact that $\tilde T$ is even, %
	Lemma~\ref{lem:divpsi}~(i) together with Corollary~\ref{coro:psieppparts}~(ii)(a) yields equality of~$p$-parts \linebreak $(\Psi_{R}^{\pm})_p=(\Psi_{\abs{b-r}}^{\pm})_p$, and we are done.

	In subcases $\boldsymbol{\gamma}$ and $\boldsymbol{\delta}$, we have $E=\frac{\Psi_{T}^{-}}{2} \cdot \frac{\Psi_{R}^{\pm}}{\Psi_{\abs{b-r}}^{\mp}}$, and $p$ is unitary for $\QQ$ and $\tilde T$ is odd. Thus, the first factor $\Psi_{T}^{-}$ is not divisible by~$p$ by Lemma~\ref{lem:divpsiep}~(ii). Then we are done using $R_p=(\abs{r-b})_p$ and the fact that $\tilde T$ is odd, Lemma~\ref{lem:divpsi}~(i) and Corollary~\ref{coro:psieppparts}~(ii)(b).

	We are left to deal with Cases~IIIa--b where the arguments are essentially the same as before. Noticing that $R'=T'+b'-r'$ and assuming that $e_p$ divides~$R'$, we see that $R'$ and $\abs{r'-b'}$ have the same $p$-part  by Lemma~\ref{lem:nepeqquantities2}~(i)--(ii). Then, the reasoning goes along the lines of Cases~IIa--bb using the usual substitutions.

	\subparagraph*{\textit{$q$-part of~$E$}}
	As before, only factors $\Psi_k^{\pm}$ with $e_q$ diving $k$ can contribute non-trivially to the $q$-part of~$E$.

	Let us focus on Cases~IIa--bb first. By Lemma~\ref{lem:nepeqquantities}~(iii),
	$T$ is not divisible by~$e_q$ thus $\Psi_{T}^{\pm}$ are neither by Lemma~\ref{lem:divpsi}~(i).
	Thus, we have
	$$E_q=\begin{cases}
			\left(\frac{\Psi_{R}^{+}}{\Psi_{\abs{b-r}}^{\pm}}\right)_q, & \text{ in subcases } \boldsymbol{\alpha},\, \boldsymbol{\gamma}, \\
			\left(\frac{\Psi_{R}^{-}}{\Psi_{\abs{b-r}}^{\pm}}\right)_q, & \text{ in subcases } \boldsymbol{\delta},\, \boldsymbol{\beta}.
		\end{cases}$$ For brevity, we write $\pm$ in the exponent, where $\pm$ means that $+$ and $-$ are attained in the subcase listed first and second, respectively. 
	Recall that $R=me_q$ and that $q$ is odd.

	\noindent Now, let us look at subcases $\boldsymbol{\alpha}$ and $\boldsymbol{\gamma}$. Since~$q$ is linear for $\QQ$, or $q$ is unitary for $\QQ$ and $m$ is even, we get that $\Psi_{R}^{+}$ is not divisible by~$q$ by Lemma~\ref{lem:divpsiep}~(i) or~(ii), respectively, and we are done.

	\noindent In subcases $\boldsymbol{\beta}$ and $\boldsymbol{\delta}$ the prime $q$ is unitary for $\QQ$ and $m$ is odd. Again, Lemma~\ref{lem:divpsiep}~(ii) yields that $\Psi_{R}^{-}$ is not divisible by~$q$, hence $E$ is neither.

	In Cases~IIIa, we argue exactly the same way as before using the usual substitutions (with $m$ replaced by~$m-1$ in our reasoning since~$R=me_q$ is replaced by~$R'=(m-1)e_q$) and that $T'$ is not divisible by~$e_q$ according to Lemma~\ref{lem:nepeqquantities2}~(iv).

	However, in Case~IIIb, $T'$ could still be divisible by~$e_q$. Then we argue as follows: First, we can still argue the same way as before that $\Psi_{R'}^{+}$ in subcases $\boldsymbol{\alpha'}$ and $\boldsymbol{\gamma'}$, and  $\Psi_{R'}^{-}$ in subcases $\boldsymbol{\beta'}$ and $\boldsymbol{\delta'}$ is not divisible by~$q$. Thus, we get that
	$$E_q=\begin{cases}
			\left(\frac{\Psi_{T'}^{\pm}}{\Psi_{\abs{b'-r'}}^{\pm}}\right)_q, & \text{ in subcases }\boldsymbol{\alpha'},\, \boldsymbol{\gamma'}, \\
			\left(\frac{\Psi_{T'}^{\pm}}{\Psi_{\abs{b'-r'}}^{\pm}}\right)_q, & \text{ in subcases } \boldsymbol{\delta'}, \, \boldsymbol{\beta'}.
		\end{cases} $$
	Note that $T'=R'-(b'-r')$. We assume that $e_q$ divides~$T'$, otherwise we are done by Lemma~\ref{lem:divpsi}~(i). Then $e_q$ automatically divides~$B'\coloneqq b'-r'$ since~$R'=(m-1) e_q$.
	Now, we have to compare $q$-parts of~$T'$ and $B'$ to apply Corollary~\ref{coro:psieppparts} and conclude that $E$ is not divisible by~$q$. Recall that $q$ divides~$m-1$ by definition of Case~IIIb. Thus, $q$ divides~$T'$ if and only if $q$ divides~$B'$.
	We have $B'= b'-r'<2r'-r'=r'=e_q+r$ by Lemma~\ref{lem:nepeqquantities2}~(iii), thus $B<2 e_q<qe_q$, and moreover $e_q$ divides~$B$ by the above assumption.
	Thus, $B'$ is not divisible by~$q$ and $T'$ is neither. In particular, $T'$ and $B'$ have the same $q$-part. Now, we use Corollary~\ref{coro:psieppparts} to deduce that $E_q=1$; in subcases $\boldsymbol{\alpha'}$ and $\boldsymbol{\gamma'}$ we use~(i) if $q$ is linear and (ii)(a) otherwise, and in the remaining subcases~(ii)(b) gives the result.
	In total, we conclude that the degree $d^\Lambda$ is not divisible by~$p$ nor $q$.

	All in all, the non-trivial unipotent character $\chi^\Lambda$ lies in $\irrprime p {\blprinc{p}{G}} \cap \irrprime q {\blprinc{q}{G}}$. %
\end{proof}

We conclude that Theorem~\ref{thm:A} holds for any finite simple group of type $\type{B}_n$ or $\type{C}_n$.
\begin{corollary} \label{coro:pqtypesBC}
	Let $S$ be a finite simple group of type $\type{B}_n$ %
	or $\type{C}_n$ with $n\geq 2$, %
	defined over a finite field $\F_\QQ$.
	Let $p$ and $q$ be distinct primes such that $p$ and $q$ do not divide~$\QQ$ and $pq$ divides~$\abs S$. %
	Then, we have $$\irrprime p {\blprinc{p}{S}} \cap \irrprime q {\blprinc{q}{S}}\neq\{\mathbbm{1}_S\}.$$
\end{corollary}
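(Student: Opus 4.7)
The plan is to reduce to Proposition~\ref{pqtypeBC} by passing from the simple group $S$ to a suitable finite reductive group $G$ covering it, exactly as was done in Corollary~\ref{coro:pqtypeA} for type $\type{A}$. Concretely, let $\GG=\GG_{sc}$ be a simple simply connected algebraic group of type $\type{B}_n$ or $\type{C}_n$ defined over $\overline{\F_\QQ}$ and $F\colon\GG\to\GG$ a Frobenius endomorphism giving $S$ as $S=G/\zent G$ with $G=\GG^F$. Since divisibility of $\abs S$ by $p$ and $q$ is equivalent to divisibility of $\abs G$ by $p$ and $q$ (they differ only by the order of the center), the hypotheses of Proposition~\ref{pqtypeBC} are met and provide a non-trivial irreducible character $\chi\in\irrprime p{\blprinc p G}\cap\irrprime q{\blprinc q G}\setminus\{\mathbbm 1_G\}$.

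Next I would invoke the general framework recalled at the beginning of Section~\ref{sec:classicaltypes}: since $p$ and $q$ are non-defining, any character in $\irr{\blprinc p G}\cap\irr{\blprinc q G}$ lies in $\mathcal{E}_p(G,1)\cap\mathcal{E}_q(G,1)=\Uch(G)$, so $\chi$ is unipotent. Then I would use that every unipotent character of $G$ has $\zent G$ in its kernel and deflates to an irreducible character $\bar\chi\in\irr S$ of the same degree, and that two unipotent characters of $G$ lie in the same $p$-block of $G$ if and only if their deflations lie in the same $p$-block of $S$ (\cite[Lemma 17.2]{CE}). Hence $\bar\chi$ lies in $\blprinc p S\cap\blprinc q S$ and has degree coprime to $pq$, and it is non-trivial because $\chi$ is and deflation is injective on $\Uch(G)\to\irr S$.

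Thus $\bar\chi\in\irrprime p{\blprinc p S}\cap\irrprime q{\blprinc p S}\setminus\{\mathbbm 1_S\}$, which proves the claim. No obstacle is expected here: all the heavy lifting (combinatorial construction of the symbol $\Lambda$, verification of the block and degree conditions) has already been done in Proposition~\ref{pqtypeBC}, and the passage from $G$ to $S$ is purely a matter of invoking the compatibility of unipotent blocks with quotients by the center. As noted after Corollary~\ref{coro:pqtypeA}, this deduction does not require $S$ to be simple---only that $S=G/\zent G$---so the argument also yields the statement of Theorem~\ref{thm:A} for the almost simple version once combined with Proposition~\ref{pqtypeBC}.
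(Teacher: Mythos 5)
Your proof is correct and takes essentially the same approach as the paper: identify $S=G/\zent G$ with $G=\GG_{sc}^F$, apply Proposition~\ref{pqtypeBC}, and transfer the resulting unipotent character to $S$ via the deflation/block compatibility recalled at the start of Section~\ref{sec:classicaltypes}. You simply spell out in full the "discussion at the beginning of Section~\ref{sec:classicaltypes}" that the paper's one-line proof cites by reference (note the small typo $\blprinc p S$ in place of $\blprinc q S$ in your last display).
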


\begin{proof} Note that $S$ is of the form $S=G/\zent G$ where $G=\GG_{sc}^F$ is the fixed point set of a simple simply connected group $\GG_{sc}$ of type $\type{B}_n$ or $\type{C}_n$, defined over $\overline{ \F_\QQ}$, under a suitable Frobenius endomorphism $F\colon \GG_{sc} \to \GG_{sc}$ defining an $\F_\QQ$-structure on $\GG_{sc}$. %
	By Proposition~\ref{pqtypeBC}, we have $\irrprime p {\blprinc p G} \cap \irrprime q {\blprinc q G}\neq \{\mathbbm{1}_G\}$ and the analogous result for $S$ follows by the discussion at the beginning of Section~\ref{sec:classicaltypes}. Note that the whole reasoning there does not require the group $S$ to be simple, just of the form $S=G/\zent G$.
\end{proof}

\begin{remark}
	In conclusion, in Corollary~\ref{coro:pqtypesBC} we reprove \cite[Proposition 3.7]{NRS} for groups of type $\type{B}_n$ or $\type{C}_n$ and $2\in \{p,q\}$ and extend this result to odd primes.
	For~$p=2$, our proof simplifies considerably and the only subcases appearing in the proof of Proposition~\ref{pqtypeBC} are $\boldsymbol{\alpha}$, $\boldsymbol{\beta}$ and $\boldsymbol{\alpha'}$, $\boldsymbol{\beta'}$, respectively (according to Table~\ref{tab:subcases} and Table~\ref{tab:subcasesprime}).
\end{remark}

\section*{Conclusion and open questions}
We eventually prove Theorem~\ref{thm:A}.
\begin{proof}[Proof of Theorem~\ref{thm:A}]
	This follows directly from Proposition \ref{pqSn}, Proposition \ref{pqAn}, Corollary \ref{coro:pqtypeA}, and Corollary \ref{pqtypeBC}.
\end{proof}

Finally, note that Conjecture (NRS) still remains open for finite simple groups of type~$\type{D}_n$ and $\tw{2}{\type{D}_n}$ (and distinct non-defining odd primes). Many cases are already settled by the author and a subsequent paper is planned.\newline
As a next step we aim to move from simple to almost simple groups and investigate the conjecture for almost simple groups with socle a classical group of Lie type.

\thispagestyle{empty}
\phantomsection %

\bibliographystyle{alpha-abbrvsort}  %
\bibliography{bib_pq}
\end{document}